\newcommand{\La}{\langle}
\newcommand{\Ra}{\rangle}
\def\supp{\operatorname{supp}}
\newcommand{\unit}{1\!\!1}
\newcommand{\bE}{\mathbb{E}}
\newcommand{\bR}{\mathbb{R}}
\newcommand{\bN}{\mathbb{N}}
\newcommand{\cE}{\mathcal{E}}
\newcommand{\cD}{\mathcal{D}}
\newcommand{\cS}{\mathcal{S}}
\newcommand{\cA}{\mathcal{A}}
\newcommand{\cF}{\mathcal{F}}
\newcommand{\cG}{\mathcal{G}}
\newtheorem{theorem}{Theorem}[section]
\newtheorem{corollary}[theorem]{Corollary}
\newtheorem{lemma}[theorem]{Lemma}
\newtheorem{prop}[theorem]{Proposition}
\theoremstyle{remark}
\newtheorem{remark}{Remark}[section]
\numberwithin{equation}{section}
\begin{document}

\title[Paraproducts, Bloom BMO and Sparse BMO Functions]{Paraproducts, Bloom BMO and Sparse BMO Functions}

\author[Valentia Fragkiadaki]{Valentia Fragkiadaki}
\address{Department of Mathematics, Texas A\&M University, College Station, TX 77843, USA}
\email{valeria96@tamu.edu}

\author[Irina Holmes Fay]{Irina Holmes Fay}
\address{Department of Mathematics, Texas A\&M University, College Station, TX 77843, USA}
\email{irinaholmes@tamu.edu}
\thanks{I. Holmes Fay is supported by Simons Foundation: Mathematics and Physical Sciences-Collaboration Grants for Mathematicians, Award number 853930.}

\makeatletter
\@namedef{subjclassname@2010}{
  \textup{2010} Mathematics Subject Classification}
\makeatother
\subjclass[2010]{42B20, 42B35, 47A30}
\keywords{}

\begin{abstract} 
We address $L^p(\mu)\rightarrow L^p(\lambda)$ bounds for paraproducts in the Bloom setting. We introduce certain ``sparse BMO'' functions associated with sparse collections with no infinitely increasing chains, and use these to express sparse operators as sums of paraproducts and martingale transforms -- essentially, as Haar multipliers -- as well as to obtain an equivalence of norms between sparse operators $\cA_\cS$ and compositions of paraproducts $\Pi^*_a\Pi_b$.
\end{abstract}
\maketitle

In 1985, Steven Bloom proved \cite{Bloom} that the commutator $[b,H]f = b\cdot Hf - H(b\cdot f)$, where $H$ is the Hilbert transform, is bounded $L^p(\mu)\rightarrow L^p(\lambda)$, where $\mu,\lambda$ are two $A_p$ weights ($1<p<\infty$), if and only if $b$ is in a weighted BMO space determined by the two weights $\mu$ and $\lambda$, namely $b\in BMO(\nu)$, where $\nu:=\mu^{1/p}\lambda^{-1/p}$ and
	$$
	\|b\|_{BMO(\nu)} := \sup_Q \frac{1}{\nu(Q)}\int_Q|b(x)-\La b\Ra_Q|\,dx.
	$$
In \cite{HLW} this result was extended to commutators $[b, T]$ in $\bR^n$ with Calder\'{o}n-Zygmund operators $T$. Soon after, \cite{Lerner} gave a different proof which yielded a quantitative result for the upper bound:
	\begin{equation}
	\label{E:Lerner}
	\|[b, T] :L^p(\mu) \rightarrow L^p(\lambda)\| \lesssim \|b\|_{BMO(\nu)} \bigg([\mu]_{A_p} [\lambda]_{A_p}\bigg)^{\max\left(1, \frac{1}{p-1}\right)}.
	\end{equation}
The proof in \cite{HLW} took the route of Hyt\"{o}nen's representation theorem (the $\bR^n$, Calder\'{o}n-Zygmund operator generalization of Petermichl's result \cite{Stef} on the Hilbert transform), and relied heavily on paraproduct decompositions. The proof in \cite{Lerner} used sparse operators and Lerner's median inequalities to obtain directly a sparse domination result for the commutator $[b, T]$ itself, avoiding paraproducts althogehter. 

This paper addresses $L^p(\mu)\rightarrow L^p(\lambda)$ bounds for the paraproducts. Based on the one-weight situation, we suspect that these bounds should be smaller than the ones for commutators: in the one-weight case
	$$
	\|[b, H]:L^p(w)\rightarrow L^p(w)\| \lesssim \|b\|_{BMO} [w]_{A_p}^{2\max\left(1, \frac{1}{p-1}\right)}
	$$
and
	$$
	\|\Pi_b :L^p(w)\rightarrow L^p(w)\| \lesssim \|b\|_{BMO} [w]_{A_p}^{\max\left(1, \frac{1}{p-1}\right)},
	$$
are both known to be sharp -- see \cite{PereyraSparse,Chung} and the references therein -- (where throughout this paper $A\lesssim B$ is used to mean $A\leq C(n)B$, with a constant depending on the dimension and maybe other quantities such as $p$ or Carleson constants $\Lambda$ of sparse collections, but in any case not depending on any $A_p$ characteristics of the weights involved). In the two-weight Bloom situation, we show in Theorem \ref{T:ParaBd} that
	$$
	\|\Pi_b : L^p(\mu)\rightarrow L^p(\lambda)\| \lesssim \|b\|_{BMO(\nu)}[\mu']_{A_{p'}}[\lambda]_{A_p} =
	\|b\|_{BMO(\nu)} [\mu]_{A_p}^{\frac{1}{p-1}}[\lambda]_{A_p}
	$$
We do not know if this bound is sharp, and this is subject to future investigations -- but the bound is smaller than the one in \eqref{E:Lerner}. In fact, it is strictly smaller with the exception of $p=2$, when both bounds are $[\mu]_{A_2}[\lambda]_{A_2}$. We can however show that our bound is sharp in one particular instance, namely when $\mu=w$ and $\lambda=w^{-1}$ for some $A_2$ weight $w$. We show this in Section \ref{Ss:sharp} via an appeal to the one-weight linear $A_2$ bound for the dyadic square function.

Obviously this bound does not recover the one-weight situation: letting $\mu=\lambda=w$ for some $w\in A_2$, $\nu=1$ and our bound would give
	$$
	\|\Pi_b:L^2(w)\rightarrow L^2(w)\|\lesssim \|b\|_{BMO}[w]^2,
	$$
when we know that the optimal bound is linear in the $A_2$ characteristic. If the optimal Bloom paraproduct bound is to recover this one-weight situation, we suspect it would need a dependency on $[\nu]_{A_2}$ -- as it would need to somehow account for the case $\mu=\lambda$, or $\nu=1$.

The proof of the Bloom paraproduct bound above relies on dominating the paraproduct by a ``Bloom sparse operator''
	$
	\cA_\cS^\nu f:= \sum_{Q\in\cS}\La \nu\Ra_Q\La f\Ra_Q \unit_Q,
	$
where $\cS$ is a sparse collection, and proving that $\cA_\cS^\nu$ satisfies the bound $[\mu]_{A_p}^{1/(p-1)}[\lambda]_{A_p}$ above. We do this in Theorem \ref{T:Bspbd}. The domination of the paraproduct is treated in Section \ref{S:ParaDom}.

Before all this however, we consider in Section \ref{S:SparseBMO} a special type of sparse collections, $\Upsilon^\cD(\bR^n)$, which are sparse collections with no ``infinitely increasing chains'' (a terminology borrowed from \cite{HytonenLectures}). We see that any such collection can be associated with a BMO function
	$$
	b_\cS := \sum_{Q\in\cS}\unit_Q,
	$$
which satisfies $\|b_\cS\|_{BMO}\leq\Lambda$, where $\Lambda$ is the Carleson constant of $\cS$ (we show this in Appendix \ref{A:my-sparse}). Once we have a BMO function, we can immediately talk about paraproducts with symbol $b_\cS$. In fact, we see in Section \ref{sS:sparse=sum} that these functions allow us to express any sparse operator $\cA_\cS$, $\cS\in\Upsilon^\cD(\bR^n)$, as a sum of paraproducts and a martingale transform:
	$$
	\cA_\cS f = \Pi_{b_\cS}f + \Pi^*_{b_\cS}f + T_{\tau_\cS}f,
	$$
where $T_{\tau_\cS}$ is a martingale transform:
	$$
	T_{\tau_\cS} = \sum_{J\in\cD} (\tau_\cS)_J (f, h_J)h_J, \text{ where } (\tau_\cS)_J := \frac{1}{|J|}\sum_{I\in\cS, I\subsetneq J}|I| \leq \Lambda, \:\:\forall J\in\cD.
	$$
As discussed in Section \ref{sS:sparse=sum}, this gives us an upper bound for norms of sparse operators in terms of norms of paraproducts and martingale transforms, and in fact the equivalence
	$$
	\sup_{\cS\in\Upsilon^\cD}\|\cA_\cS\|_{L^p(w)\rightarrow L^p(w)} \simeq_{n,p,\Lambda} 
		\sup_{b\in BMO^\cD}\frac{\|\Pi_b\|_{L^p(w)\rightarrow L^p(w)}}{\|b\|_{BMO^\cD}}
		+ \sup_{b\in BMO^\cD}\frac{\|\Pi^*_b\|_{L^p(w)\rightarrow L^p(w)}}{\|b\|_{BMO^\cD}}
		+ \sup_{\tau\in\ell^\infty}\frac{\|T_{\tau}\|_{L^p(w)\rightarrow L^p(w)}}{\|\tau\|_{\infty}}.
	$$

The process used to obtain the BMO function $b_\cS$ associated with $\cS$ also works with weights, and obtaining a function in weighted BMO spaces associated with $\cS\in\Upsilon^\cD(\bR^n)$: if $w\in A_p$, the function
	$$
	b_\cS^w:=\sum_{Q\in\cS}\La w\Ra_Q \unit_Q
	$$
is in $BMO^\cD(w)$, with $\|b_\cS^w\|_{BMO^\cD(w)} \leq 2[w]_{A_p}\Lambda^p$. Repeating the process above, we try to express $\cA_\cS$ as a sum of the paraproducts associated with $b_\cS^w$ and a martingale transform -- but we discover instead the operator
	$$
	\cA_\cS^w f:= \sum_{Q\in\cS}\La w\Ra_Q\La f\Ra_Q\unit_Q,
	$$
and its decomposition as
	$$
	\cA_\cS^w f = \Pi_{b_\cS^w}f + \Pi^*_{b_\cS^w}f + T_{\tau_\cS^w}f,	
	$$
detailed in Proposition \ref{P:SPM}.

While it would be interesting if the paraproducts and the martingale transform could somehow be ``separated'' above, giving an independent proof that these operators have the same dependency on $[w]_{A_p}$ by showing each is equivalent to norms of $\cA_\cS$, we are able to show that norms of sparse operators are equivalent to certain compositions of paraproducts. In Section \ref{sS:ParaComp}, we see that
	$$\cA_\cS \simeq \Pi^*_{\widetilde{b}_\cS}\Pi_{\widetilde{b}_\cS},$$
where $\widetilde{b}_\cS$ is another BMO function we can easily associate with $\cS$: 
	$$\widetilde{b}_\cS := \sum_{Q\in\cS}\sqrt{|Q|}h_Q.$$
This provides an upper bound:
	$$
	\sup_{\substack{\cS\in\Upsilon^\cD(\bR^n) \\ \Lambda_{(\cS)}=\Lambda}} \frac{\|\cA_\cS:L^p(w)\rightarrow L^p(w)\|}{\Lambda}
	\leq \sup_{a,b\in BMO^\cD} \frac{\|\Pi^*_a\Pi_b :L^p(w)\rightarrow L^p(w)\|}{\|a\|_{BMO^\cD}\|b\|_{BMO^\cD}}.
	$$	
For the other direction, we show in Appendix \ref{A:ParaComp} -- using a bilinear form argument -- that for all Bloom weights $\mu,\lambda,\nu$, BMO functions $a\in BMO^\cD$, $b\in BMO^\cD(\nu)$, and $\Lambda>1$, 
	$$
	\|\Pi_a^*\Pi_b :L^p(\mu)\rightarrow L^p(\lambda)\| \leq C(n) \|a\|_{BMO^\cD}\|b\|_{BMO^\cD(\nu)} 
	\sup_{\substack{\cS\in\Upsilon^\cD(\bR^n)\\ \Lambda_{(\cS)}=\Lambda}} \left(\frac{\Lambda}{\Lambda-1}\right)^3 
	\|\cA_\cS^\nu:L^p(\mu) \rightarrow L^p(\lambda)\|.
	$$
Note that taking $\mu=\lambda=w$ above, for some $w\in A_p$, we have the one-weight result
	$$
	\|\Pi_a^*\Pi_b:L^p(w)\rightarrow L^p(w)\|\lesssim \|a\|_{BMO^\cD}\|b\|_{BMO^\cD}[w]_{A_p}^{\max\left(1, \frac{1}{p-1}\right)}.
	$$
Moreover, we obtain the equivalence of norms
	$$
	\sup_{\cS\in\Upsilon^\cD(\bR^n)} \|\cA_\cS:L^p(w)\rightarrow L^p(w)\| \simeq_{\Lambda, p, n} \sup_{a,b\in BMO^\cD(\bR^n)} 
	\frac{\|\Pi^*_a\Pi_b :L^p(w)\rightarrow L^p(w)\|}{\|a\|_{BMO^\cD}\|b\|_{BMO^\cD}}.
	$$

Section \ref{S:ParaDom} gives a proof of a pointwise domination of paraproducts by sparse operators.
It relies on first proving certain local pointwise domination results, which are then applied to $BMO_\cD(w)$ functions with finite Haar expansion, and extending to the general case. So this argument works whenever $\Pi_b$ acts between $L^p$ spaces where the Haar system is an unconditional basis -- Lebesgue measure or $A_p$ weights. The argument also works with the weighted BMO norm, 
	$$
	\|b\|_{BMO^\cD(w)} := \sup_{Q\in\cD} \frac{1}{w(Q)} \int_Q |b - \La b\Ra_Q|\,dx,
	$$
defined in terms of an $L^1(dx)$ quantity -- the Haar system is not unconditional in $L^1(dx)$, but we can choose an ordering of the Haar system that ensures convergence in $L^1(dx)$.
The choice to work with $b$ rather than compactly supported $f$ is motivated by the desire to obtain domination by sparse operators with no infinitely increasing chains. Specifically, we work with restricted paraproducts:
	$$
	\Pi_{b, Q_0} f(x) := \sum_{Q\in Q_0} (b, h_Q) \La f\Ra_Q h_Q(x), \:\:\forall Q_0\in\cD,
	$$
and construct a sparse collection $\cS(Q_0) \subset \cD(Q_0)$ which ``ends'' at $Q_0$, and such that $\cA_\cS^w f$ pointwise dominates $\Pi_{b, Q_0} f$ on $Q_0$. Since the Haar expansion of $b$ effectively dictates the Haar expansion of $\Pi_b$ (as well as $\Pi_b^*$ and $\Gamma_b$), this will lead from finite Haar expansion $b$'s to collections in $\Upsilon_\cD(\bR^n)$.

\vspace{0.1in}

The second author deeply thanks Cristina Pereyra for several conversations about this work, and for her general support.


\section{Setup and Notations}

\subsection{Dyadic Grids.} By a dyadic grid $\cD$ on $\bR^n$ we mean a collection of cubes $Q\subset\bR^n$ that satisfies:
	\begin{itemize}
	\item Every $Q\in\cD$ has side length $2^k$ for some $k\in\mathbb{Z}$: $\ell(Q) = 2^{k}$;
	\item For a fixed $k_0\in\mathbb{Z}$, the collection $\{Q\in\cD:\ell(Q)=2^{k_0}\}$ forms a partition on $\bR^n$;
	\item For every $P,Q\in\cD$, the intersection $P\cap Q$ is one of $\{P,Q,\emptyset\}$. In other words, two dyadic cubes intersect each other if and only if one contains the other.
	\end{itemize}
For example, the standard dyadic grid on $\bR^n$ is:
	$$
	\cD_0 := \{2^{-k}([0,1)^n+m): k\in\mathbb{Z}, m\in\mathbb{Z}^n\}.
	$$
We assume such a collection $\cD$ is fixed throughout the paper. For every $Q\in\cD$ and positive integer $k\geq 1$ we let $Q^{(k)}$ denote the $k^{th}$ dyadic ancestor of $Q$ in $\cD$, i.e. the unique $R\in\cD$ such that $R\supset Q$ and $\ell(R)=2^k \ell(Q)$. Given $Q_0\in\cD$, we let $\cD(Q_0)$ denote the collection of dyadic subcubes of $Q_0$:
	$$
	\cD(Q_0) :=\{Q\in\cD: Q\subset Q_0\}.
	$$

\subsection{Haar Functions.} Given a dyadic grid $\cD$ on $\bR$, we associate to each $I\in\cD$ the cancellative Haar function
$h_I:=h_I^0 = \frac{1}{\sqrt{|I|}}(\unit_{I_+}-\unit_{I_-})$, where $I_+$ and $I_-$ are the right and left halves of $I$, respectively. The non-cancellative Haar function is $h_I^1 := \frac{1}{\sqrt{|I|}}\unit_I$. The cancellative Haar functions $\{h_I\}_{I\in\cD}$ form an orthonormal basis for $L^2(\bR, dx)$, and an unconditional basis for $L^p(\bR)$, $1<p<\infty$. Throughout this paper we let $(\cdot,\cdot)$ denote inner product in $L^2(\,dx)$, so we write for example
	$$
	f = \sum_{I\in\cD}(f, h_I)h_I,
	$$
where $(f, h_I)=\int f h_I\,dx$ is the Haar coefficient of $f$ corresponding to $I$.

In $\bR^n$, we have $2^n-1$ cancellative Haar functions and one non-cancellative: for every dyadic cube $Q = I_1\times I_2\times\ldots I_n$, where every $I_k\in\cD$ is a dyadic interval with common length $|I_k|=\ell(Q)$, we let
	$$
	h_Q^\epsilon(x) := h_{I_1\times\ldots\times I_n}^{(\epsilon_1, \ldots, \epsilon_n)}(x_1,\ldots, x_n) = \prod_{k=1}^n h_{I_k}^{\epsilon_k}(x_k),
	$$
where $\epsilon_k\in\{0,1\}$ for all $k$, and $\epsilon=(\epsilon_1,\ldots, \epsilon_n)$ is known as the signature of $h_Q^\epsilon$. The function
$h_Q^\epsilon$ is cancellative except in one case, when $\epsilon\equiv 1$. As in $\bR$, the cancellative Haar functions form an orthonormal basis for $L^2(\bR^n, dx)$, and an unconditional basis for $L^p(\bR^n, dx)$, $1<p<\infty$. We often write
	$$
	f = \sum_{Q\in\cD}(f, h_Q)h_Q
	$$
to mean
	$$
	f = \sum_{\substack{Q\in\cD,\\ \epsilon\not\equiv 1}}(f, h_Q^\epsilon)h_Q^\epsilon,
	$$
omitting the signatures, and understanding that $h_Q$ always refers to a cancellative Haar function. There is really only one instance for us where the signatures matter, and that is in the definition of the paraproduct $\Gamma_b$ in $\bR^n$, $n>1$.

Note that whenever $P\subsetneq Q$ for some dyadic cubes $P, Q$, the Haar function $h_Q$ will be constant on $P$. We denote this constant by 
	$$
	h_Q(P) := \text{ the constant value } h_Q \text{ takes on } P\subsetneq Q.
	$$
It is easy to show that
	$$
	\La f\Ra_{Q}=\sum_{R\supsetneq Q}(f, h_R)h_R(Q), \:\:\forall Q\in\cD,
	$$
where throughout the paper
	$$
	\La f\Ra_Q := \frac{1}{|Q|}\int_Q f\,dx,
	$$
denotes average over $Q$, and sums such as $\sum_{P\subset Q}$ or $\sum_{R\supset Q}$ are understood to be over dyadic cubes.

\subsection{$A_p$ weights.} A weight is a locally integrable, a.e. positive function $w(x)$ on $\bR^n$. Any such weight immediately gives a measure on $\bR^n$ via $dw := w(x)dx$ and 
	$$
	\int f \,dw := \int f(x) w(x) \,dx
	$$
yields the obvious $L^p$-spaces associated with the measure $w$. We denote these spaces by $L^p(w)$.
Given $1<p<\infty$, we say $w\in A_p$ if
	$$
	[w]_{A_p} := \sup_Q \La w\Ra_Q \La w'\Ra_Q^{p-1} < \infty,
	$$
where the supremum is over cubes $Q\subset \bR^n$, $p'$ denotes the H\"{o}lder conjugate of $p$:
	$$
	\frac{1}{p}+\frac{1}{p'}=1,
	$$
and 
	$$
	w' := w^{1-p'}=w^{-p'/p}.
	$$
In fact, $w\in A_p$ if and only if  the conjugate weight $w'$ is in $A_{p'}$, with
	$$
	[w']_{A_{p'}} = [w]_{A_p}^{\frac{1}{p-1}}.
	$$
We restrict our attention to dyadic $A_p$ weights, denoted $A_p^\cD$, and defined in the same way except the supremum is only over dyadic cubes $Q\in\cD$.
Sometimes we use the standard $L^p$-duality $(L^p(w))^*=L^{p'}(w)$ with inner product $(\cdot, \cdot)_{dw}$, and other times we think of 
$(L^p(w))^*\simeq L^{p'}(w')$ with regular Lebesgue inner product $(\cdot,\cdot)$.
We refer the reader to Chapter 9 of \cite{Grafakos} for a thorough treatment of $A_p$ weights.

\subsection{Paraproducts and BMO} 

We say $b\in BMO(\bR^n)$ if
	$$
	\|b\|_{BMO(\bR^n)} := \sup_Q \frac{1}{|Q|}\int_Q|b(x)-\La b\Ra_Q|\,dx <\infty,
	$$
where the supremum is over cubes $Q\subset \bR^n$.
Given a weight $w$ on $\bR^n$, we say $b\in BMO(w)$ is in the weighted BMO space $BMO(w)$ if
	$$
	\|b\|_{BMO(w)} := \sup_Q \frac{1}{w(Q)} \int_Q|b(x)-\La b\Ra_Q|\,dx <\infty.
	$$
We similarly restrict our attention to dyadic BMO spaces, $BMO^\cD$ and $BMO^\cD(w)$ for the weighted version, both defined in the same way except the supremum is over dyadic cubes $Q\in\cD$.

In $\bR$, we have two paraproducts:
	\begin{eqnarray*}
	\Pi_b f(x) &:=& \sum_{I\in\cD}(b, h_I)\La f\Ra_Ih_I(x)\\
	\Pi^*_b f(x) &:=& \sum_{I\in\cD}(b, h_I)(f, h_I)\frac{\unit_I(x)}{|I|}.
	\end{eqnarray*}
They have the property that
	$$
	bf = \Pi_b f + \Pi^*_b f + \Pi_f b,
	$$
and their boundedness is usually characterized by some $BMO$-type norm of the symbol $b$. 

In $\bR^n$ we have three paraproducts:
	\begin{eqnarray*}
	\Pi_b f(x) &:=& \sum_{Q\in\cD}(b, h_Q)\La f\Ra_Qh_Q(x)\\
	\Pi^*_b f(x) &:=& \sum_{Q\in\cD}(b, h_Q)(f, h_Q)\frac{\unit_Q(x)}{|Q|}\\
	\Gamma_b f(x) &:=& \sum_{Q\in\cD}\sum_{\epsilon,\eta\not\equiv 1; \epsilon\neq\eta} (b, h_Q^\epsilon)(f, h_Q^\eta) \frac{1}{\sqrt{|Q|}}h_Q^{\epsilon+\eta}.
	\end{eqnarray*} 
$\Pi_b$ and $\Pi_b^*$ are adjoints in $L^2(\bR^n)$, and $\Gamma_b$ is self-adjoint. Generally, in the $L^p$-situation, we still have
	$$
	(\Pi_b f, g) = (f, \Pi_b^* g),
	$$
so if we think of $\Pi_b:L^p(\mu)\rightarrow L^p(\lambda)$ for two $A_p$ weights $\mu,\lambda$, its adjoint is $\Pi_b^*:L^{p'}(\lambda')\rightarrow L^{p'}(\mu')$ -- where we are thinking of Banach space duality in terms of $(L^p(\mu))^*\simeq L^{p'}(\mu')$ and $(L^p(\lambda))^*\simeq L^{p'}(\lambda')$, both with regular Lebesgue inner product $(\cdot,\cdot)$.

\section{Sparse BMO Functions}
\label{S:SparseBMO}
\subsection{Sparse Families.} Let $0 < \eta < 1$. A collection $\cS \subset \cD$ is said to be \textit{$\eta$-sparse} if for every $Q\in\cS$ there is a measurable subset $E_Q \subset Q$ such that the sets $\{E_Q\}_{Q\in\cS}$ are pairwise disjoint, and satisfy $|E_Q| \geq \eta |Q|$ for all $Q\in\cS$.

Let $\Lambda >1$. A family $\cS\subset\cD$ is said to be \textit{$\Lambda$-Carleson} if
	$$
	\sum_{P\in\cS, P \subset Q} |P| \leq \Lambda |Q|, \:\: \forall Q\in\cD.
	$$
It is easy to see that it suffices to impose this condition only on $Q\in\cS$. It is also easy to see that any $\eta$-sparse collection is $1/\eta$-Carleson. Far less obvious is the remarkable property that any $\Lambda$-Carleson collection is $1/\Lambda$-sparse, which is proved in the now classic work \cite{LernerNazarov}. 

A special type of sparse collection which appears most frequently in practice is defined in terms of so-called ``$\cS$-children.''
Suppose a family $\cS\subset\cD$ has the property that
	$$
	\sum_{P\in\text{ch}_\cS(Q)} |P| \leq \alpha |Q|, \forall Q\in\cS,
	$$
where $\alpha \in (0, 1)$ and $\text{ch}_\cS(Q)$, the $\cS$-children of $Q$, is the collection of maximal $P\in\cS$ such that $P \subsetneq Q$. Then $\cS$ is $(1-\alpha)$-sparse: let
	$$E_Q := Q \setminus \bigcup_{P\in\text{ch}_\cS(Q)}P,$$
which are clearly pairwise disjoint, and satisfy $|E_Q| \geq (1-\alpha)|Q|$.

A collection that is sparse with respect to Lebesgue measure is also sparse with respect to any $A_p$ measure $w$. 
Recall that (see \cite{Grafakos}, Proposition 9.1.5) an equivalent definition for $[w]_{A_p}$ is
	$$
	[w]_{A_p} = \sup_{Q\in\cD} \sup_{\substack{f \in L^p(Q, w) \\ |Q\cap\{|f|=0\}|=0}} \frac{\La|f|\Ra_Q^p}{\bE_Q^w (|f|^p)},
	$$
where
	$$
	\bE_Q^w f:= \frac{1}{w(Q)}\int_Q f\,dw.
	$$
Taking $f = \unit_A$ above, for some measurable subset $A$ of a fixed dyadic cube $Q$, we get
	$$
	\bigg( \frac{|A|}{|Q|}\bigg)^p \leq [w]_{A_p} \frac{w(A)}{w(Q)}, \:\:\forall A\subset Q, Q\in\cD.
	$$
So, say $\cS$ is $\eta$-sparse with pairwise disjoint $\{E_Q\}_{Q\in\cS}$ subsets $E_Q\subset Q$ and $|E_Q| \geq \eta|Q|$. Then
	$$
	\eta^p \leq \bigg( \frac{|E_Q|}{|Q|}\bigg)^p \leq [w]_{A_p} \frac{w(E_Q)}{w(Q)},
	$$ 
and
	\begin{equation}
	\label{E:w-sparse}
	w(Q) \leq \frac{1}{\eta^p} [w]_{A_p} w(E_Q), \:\:\forall Q\in\cS.
	\end{equation}

\vspace{0.1in}
\begin{center}
$\ast$
\end{center}
\vspace{0.1in}

\subsection{Sparse BMO Functions.} 
\label{Ss:SparseBMO}
We borrow the following terminology from \cite{HytonenLectures}: we say a collection $\cS\subset\cD$ has an \textit{infinitely increasing chain} if there exist $\{Q_K\}_{k\in\mathbb{N}}$, $Q_k \in\cS$, such that $Q_k \subsetneq Q_{k+1}$, for all $k\in\mathbb{N}$. The following Lemma is also found in \cite{HytonenLectures}:

\begin{lemma}
If a collection $\cS\subset\cD$ has no infinitely increasing chains, then every $Q\in\cS$ is contained in a maximal $Q^*\in\cS$ -- in the sense that there exists no $R\in\cS$ such that $R\supsetneq Q$. Any two maximal $P^*, Q^*$ elements of $\cS$ are disjoint.
\end{lemma}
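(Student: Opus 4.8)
The plan is to prove the two assertions in turn, both by elementary arguments that exploit the nesting property of dyadic cubes together with the absence of infinitely increasing chains.

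\textbf{Existence of a maximal ancestor.} Fix $Q\in\cS$ and consider the chain of all elements of $\cS$ containing $Q$, i.e. the set $\cC_Q := \{R\in\cS : R\supset Q\}$. Since any two dyadic cubes are either nested or disjoint, and every element of $\cC_Q$ contains the common cube $Q$, the family $\cC_Q$ is totally ordered by inclusion. I would argue by contradiction: if $Q$ were contained in no maximal element of $\cS$, then for every $R\in\cC_Q$ there is some $R'\in\cS$ with $R'\supsetneq R$, and $R'\in\cC_Q$ as well. Starting from $Q_1 := Q$ and iterating this, one builds a strictly increasing sequence $Q_1\subsetneq Q_2\subsetneq Q_3\subsetneq\cdots$ in $\cS$ — precisely an infinitely increasing chain, contradicting the hypothesis. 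Hence some $Q^*\in\cC_Q$ is maximal in $\cS$ (no $R\in\cS$ properly contains it), and $Q^*\supset Q$. (One subtlety worth a sentence: maximality of $Q^*$ in $\cS$ is equivalent to maximality within $\cC_Q$, since any $R\in\cS$ with $R\supsetneq Q^*$ automatically contains $Q$ and so lies in $\cC_Q$.)

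\textbf{Disjointness of distinct maximal elements.} Suppose $P^*$ and $Q^*$ are both maximal in $\cS$ and $P^*\cap Q^*\neq\emptyset$. By the dyadic property, either $P^*\subset Q^*$ or $Q^*\subset P^*$. In the first case, maximality of $P^*$ forces $P^* = Q^*$ (otherwise $Q^*\in\cS$ properly contains $P^*$); symmetrically in the second case. So two distinct maximal elements cannot intersect.

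\textbf{Main obstacle.} There is no deep obstacle here — the statement is essentially a bookkeeping lemma. The only point requiring a modicum of care is making the contradiction argument for the first part rigorous: one must verify that the iteratively chosen cubes are genuinely distinct and strictly increasing (which is immediate since each is chosen to strictly contain the previous one), and that at every stage the ``no maximal ancestor'' assumption still applies to the newly chosen cube (which holds because if $Q_{k+1}$ had a maximal ancestor in $\cS$, that cube would also be an ancestor of $Q = Q_1$, contradicting our assumption on $Q$). With these observations the proof is complete.
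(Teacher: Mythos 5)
Your proof is correct. The paper itself states this lemma without proof, attributing it to Hyt\"{o}nen's lecture notes \cite{HytonenLectures}, so there is no in-paper argument to compare against; your contradiction argument for the first assertion (iteratively picking a strictly larger element of $\cS$ to manufacture an infinitely increasing chain, using that the dyadic structure keeps each new cube above $Q$) and the one-line nesting argument for disjointness of maximal elements are exactly the standard route, and both are sound.
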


These types of collections will be important for us, so we let 
	$$\Upsilon^\cD(\bR^n)$$ 
denote the set of all sparse collections in $\cD$ which have no infinitely increasing chains.

\begin{lemma}
\label{L:mzero}
Let $\cS\in\Upsilon^\cD(\bR^n)$ be a sparse collection with no infinitely increasing chains. Then the set of points contained in infinitely many elements of $\cS$ has measure $0$.
\end{lemma}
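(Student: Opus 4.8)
The plan is to reduce, via the preceding Lemma, to finitely-supported Carleson sums over ``rooted'' pieces of $\cS$. Write $N(x) := \#\{Q\in\cS : x\in Q\}$ for the multiplicity function; the set in question is $\{N=\infty\}$, and the goal is to show it is null.

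First I would invoke the preceding Lemma: since $\cS$ has no infinitely increasing chains, every $Q\in\cS$ sits inside a maximal element $Q^*\in\cS$, and any two maximal elements are disjoint. Let $\{Q_j\}_j$ enumerate the (at most countably many) maximal cubes, and set $\cS_j := \{Q\in\cS : Q\subset Q_j\}$. Because the $Q_j$ are pairwise disjoint and dyadic, each $Q\in\cS$ lies in exactly one $\cS_j$, so $\cS = \bigsqcup_j \cS_j$ and $\bigcup_{Q\in\cS}Q = \bigsqcup_j Q_j$ (a disjoint union). Off $\bigcup_j Q_j$ we have $N\equiv 0$, so it suffices to prove $N<\infty$ a.e.\ on each $Q_j$.

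Next, fix $j$ and a point $x\in Q_j$. If $Q\in\cS$ contains $x$, then $Q$ and $Q_j$ are dyadic cubes with a common point, hence one contains the other; maximality of $Q_j$ rules out $Q\supsetneq Q_j$, so $Q\subseteq Q_j$, i.e.\ $Q\in\cS_j$. Thus $N(x) = \sum_{Q\in\cS_j}\unit_Q(x)$ for every $x\in Q_j$, and integrating over $Q_j$ together with the fact that every sparse collection is $\Lambda$-Carleson for some $\Lambda$ gives
$$
\int_{Q_j} N\,dx \;=\; \sum_{Q\in\cS,\,Q\subset Q_j}|Q| \;\le\; \Lambda\,|Q_j| \;<\;\infty .
$$
Hence $N<\infty$ for a.e.\ $x\in Q_j$, and summing over the countably many $j$ completes the argument.

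I do not expect a genuine obstacle here; the only real content is the reduction to maximal roots, which is precisely where the ``no infinitely increasing chains'' hypothesis enters and where it is truly needed — an infinite strictly increasing chain $Q_0\subsetneq Q_1\subsetneq\cdots$ can be $\eta$-sparse with $\eta$ arbitrarily close to $1$ yet have \emph{every} point of $Q_0$ lying in infinitely many of its members, so a hypothesis of this kind is unavoidable. The only point requiring a touch of care is the bookkeeping observation that, for $x\in Q_j$, no element of $\cS$ through $x$ escapes $Q_j$, which follows immediately from maximality and dyadic nesting.
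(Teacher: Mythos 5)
Your proof is correct. It takes a mildly different route from the paper's: you integrate the multiplicity function $N=\sum_{Q\in\cS}\unit_Q$ over each maximal cube $Q_j$ and conclude $N<\infty$ a.e.\ from $\int_{Q_j}N\,dx=\sum_{Q\in\cS,\,Q\subset Q_j}|Q|\le\Lambda|Q_j|$ (a first Borel--Cantelli argument), whereas the paper fixes a single infinitely decreasing chain $Q^*=Q_1\supsetneq Q_2\supsetneq\cdots$ ending at a maximal element and shows its intersection $A$ is null via $k|A|\le\sum_{i=1}^k|Q_i|\le\Lambda|Q^*|$. Both arguments rest on the same two ingredients --- the reduction to disjoint maximal roots (which is exactly where the no-infinitely-increasing-chains hypothesis enters, as you note) and the Carleson sum over a maximal cube --- but your version dispatches the whole exceptional set at once, while the chain-by-chain version leaves implicit the passage from ``each chain's intersection is null'' to ``the union over all (a priori uncountably many) chains is null.'' In that sense your argument is the slightly more self-contained of the two; the paper's has the advantage of exhibiting concretely why any nested infinite subfamily must shrink to measure zero.
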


\begin{proof}
Let $\cS^*$ denote the collection of maximal elements of $\cS$. Since $\cS\in\Upsilon^\cD(\bR^n)$, every $Q\in\cS$ is contained in a unique $Q^*\in\cS^*$. Any $x$ which belongs to infinitely may elements of $\cS$ must then belong to an infinitely decreasing chain 
	$$
	x \in \ldots \subsetneq Q_k \subsetneq \ldots \subsetneq Q_2 \subsetneq Q_1 = Q^*
	$$
terminating at some maximal $Q^*\in\cS^*$. Fix any such chain and let $A$ be the set of points contained in all $Q_k$, that is $A = \bigcap_{k=1}^\infty Q_k$. Then for any $k\in\bN$:
	$$
	k|A| \leq \sum_{i=1}^k |Q_i| \leq \Lambda |Q^*|,
	$$
where $\Lambda$ is the Carleson constant of $\cS$. So $|A|\leq\frac{1}{k} \Lambda|Q^*|$ for all $k\in\bN$, and then $|A|=0$.

Alternatively, since $\{Q_k\}$ is a decreasing nest of sets, $|A|=\lim_{k\rightarrow\infty}|Q_k|$, and $\lim_{k\rightarrow\infty}|Q_k| = 0$ because the series
	$$
	\sum_{k=1}^\infty |Q_k| \leq \sum_{Q\in\cS, Q\subset Q^*}|Q| \leq \Lambda |Q^*|
	$$
converges.
\end{proof}

\vspace{0.1in}
\begin{center}
$\ast$
\end{center}
\vspace{0.1in}

The lemma above ensures that the following definition is sound: with every sparse collection $\cS\in\Upsilon^\cD(\bR^n)$ with no infinitely increasing chains
 we associate the function
 	$$
	b_\cS := \sum_{Q\in\cS}\unit_Q.
	$$
By Lemma \ref{L:mzero} we know that $b_{\cS}$ is almost everywhere finite: if $x$ is contained in infinitely many elements of $\cS$, then $b_\cS(x)=\infty$, but this can only happen on a set of measure zero.

Note also that $b_\cS$ is locally integrable: for some $Q_0\in\cD$,
	\begin{eqnarray*}
	\La b_\cS\Ra_{Q_0} &=& \frac{1}{|Q_0|} \bigg(\sum_{Q\in\cS, Q\subset Q_0} |Q| + \sum_{Q\in\cS, Q\supsetneq Q_0}|Q_0|\bigg)\\
		&=& \underbrace{\frac{1}{|Q_0|} \sum_{Q\in\cS, Q\subset Q_0} |Q|}_{\leq \Lambda}
		 + \underbrace{\#\{Q\in\cS: Q\supsetneq Q_0\}}_{<\infty \text{ because } \cS\in \Upsilon^\cD} < \infty.
	\end{eqnarray*}
Then, for some $Q_0\in\cD$:
	\begin{eqnarray*}
	(b_\cS-\La b_\cS\Ra_{Q_0})\unit_{Q_0} &=& \sum_{Q\in\cS, Q\subset Q_0} \unit_Q + \#\{Q\in\cS: Q\supsetneq Q_0\}\unit_{Q_0} - 
		\frac{\unit_{Q_0}}{|Q_0|}\sum_{Q\in\cS, Q\subset Q_0} |Q| - \#\{Q\in\cS: Q\supsetneq Q_0\}\unit_{Q_0}\\
		&=& \sum_{Q\in\cS, Q\subset Q_0} \unit_Q - \frac{\unit_{Q_0}}{|Q_0|}\sum_{Q\in\cS, Q\subset Q_0} |Q|.
	\end{eqnarray*}
In fact, we can reduce this further to
	\begin{equation}
	\label{E:bs-avg}
	(b_\cS-\La b_\cS\Ra_{Q_0})\unit_{Q_0} = \sum_{Q\in\cS, Q\subsetneq Q_0}\unit_Q - \frac{\unit_{Q_0}}{|Q_0|} \sum_{Q\in\cS, Q\subsetneq Q_0} |Q|,
	\end{equation}
which is clear if $Q_0 \notin \cS$, and if $Q_0\in\cS$ then $\unit_{Q_0}-\frac{\unit_{Q_0}}{|Q_0|}|Q_0|$ cancel.
A simple estimate then shows that
	$$
	\frac{1}{|Q_0|} \int_{Q_0} |b_\cS - \La b_\cS\Ra_{Q_0}|\,dx \leq 
		\frac{1}{|Q_0|} 2\sum_{Q\in\cS,Q\subsetneq Q_0}|Q| \leq 2\Lambda, \forall Q_0\in\cD,
	$$
so $b_\cS\in BMO^\cD(\bR^n)$. However, a more careful estimate is possible. We prove the following in Appendix \ref{A:my-sparse}.

\begin{theorem}
\label{T:SparseBMO}
Let $\cS\in\Upsilon^\cD(\bR^n)$ be a sparse collection with no infinitely increasing chains and Carleson constant $\Lambda$. Then the function
$b_\cS=\sum_{Q\in\cS}\unit_Q$ is in $BMO^\cD(\bR^n)$, with
	$$
	\|b_\cS\|_{BMO^\cD(\bR^n)} \leq \Lambda.
	$$
\end{theorem}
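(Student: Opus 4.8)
The plan is to fix $Q_0\in\cD$ and estimate $\frac{1}{|Q_0|}\int_{Q_0}|b_\cS-\langle b_\cS\rangle_{Q_0}|\,dx$ directly, using the description already obtained in \eqref{E:bs-avg}. That identity shows this integral equals $\int_{Q_0}|h-c|\,dx$, where $h:=\sum_{Q\in\cS,\,Q\subsetneq Q_0}\unit_Q\ge 0$ (a function supported in $Q_0$) and $c:=\langle h\rangle_{Q_0}=\frac{1}{|Q_0|}\sum_{Q\in\cS,\,Q\subsetneq Q_0}|Q|$, which by the Carleson condition satisfies $c\le\Lambda$. Since $\int_{Q_0}(h-c)\,dx=0$ we may write $\int_{Q_0}|h-c|\,dx=2\int_{Q_0}(h-c)_+\,dx$; the crude bound $|h-c|\le h+c$ gives only $2\Lambda$, so the whole point is to recover the missing factor $2$.

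Next I would pass to the distribution function of $h$. As $h$ takes nonnegative integer values a.e.\ (finite a.e.\ by Lemma~\ref{L:mzero}), put $a_j:=|\{x\in Q_0:h(x)\ge j\}|$ for $j\ge1$, $a_0:=|Q_0|$, and $b_k:=\sum_{j\ge k}a_j$; then $b_1=c|Q_0|$, and a short layer-cake bookkeeping (splitting $h-c=(h-m)+(m-c)$ on $\{h\ge m\}$, where $m:=\lfloor c\rfloor+1$) gives $\int_{Q_0}|h-c|\,dx=2b_m+2(\lfloor c\rfloor-c)a_m$.

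The heart of the matter is the following structural observation, which is where the sparseness/Carleson structure and the absence of infinitely increasing chains enter. For every $k\ge1$, the super-level set $\{x\in Q_0:h(x)\ge k\}$ is, up to a null set, a \emph{disjoint union of cubes $R\in\cS$}, each contained in exactly $k$ elements of $\{Q\in\cS:Q\subsetneq Q_0\}$ (the cube $R$ itself among them; take $R$ to be the smallest $\cS$-cube through a given point, and use Lemma~\ref{L:mzero} to discard the exceptional null set). On any such $R$ one has $h=k+\sum_{Q\in\cS,\,Q\subsetneq R}\unit_Q$; applying the Carleson inequality \emph{at $R\in\cS$} — which contributes the term $|R|$ on the left and hence leaves only $(\Lambda-1)|R|$ for the strictly smaller cubes — yields $\int_R(h-k)\,dx\le(\Lambda-1)|R|$. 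Summing over these disjoint $R$'s (whose measures total $a_k$) gives $b_{k+1}\le(\Lambda-1)a_k=(\Lambda-1)(b_k-b_{k+1})$, i.e.\ the geometric decay $b_{k+1}\le\frac{\Lambda-1}{\Lambda}b_k$ for all $k\ge1$, whence $b_k\le(\frac{\Lambda-1}{\Lambda})^{k-1}c|Q_0|$. The gap between ``$\Lambda$'' and ``$\Lambda-1$'' appearing here is exactly what will absorb the factor $2$.

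Feeding this into the formula $\int_{Q_0}|h-c|\,dx=2b_m+2(\lfloor c\rfloor-c)a_m$ and running an elementary case analysis finishes the proof. When $c<1$ (so $m=1$) the formula is $2c(|Q_0|-a_1)$, and the extra ingredient needed is the lower bound $a_1\ge\frac{c}{\Lambda}|Q_0|$ (from $b_1\le\Lambda a_1$), after which maximizing $t\mapsto 2t(1-t/\Lambda)$ over $0<t\le\min(1,\Lambda)$ yields at most $\Lambda|Q_0|$. When $c\ge1$ (so $m\ge2$), since $\lfloor c\rfloor-c\le0$ it suffices to show $2b_m\le\Lambda|Q_0|$, i.e.\ $2(\frac{\Lambda-1}{\Lambda})^{m-1}\min(m,\Lambda)\le\Lambda$ (using $c<m$ and $c\le\Lambda$); this splits into the case $m\le\Lambda$, where by Bernoulli's inequality it reduces to $(\Lambda-2)(\Lambda-m)\ge0$, and the case $m>\Lambda$, where $(\frac{\Lambda-1}{\Lambda})^{m-1}<(1-\frac1m)^{m-1}\le\frac12$. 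The step I expect to be the main obstacle is precisely this last one: the reduction and the distribution-function computation are routine and the structural observation is the essential idea, but converting the geometric-decay estimate into exactly the constant $\Lambda$ — rather than $2\Lambda$ — forces one to track the position of $c$ relative to the integers and to handle several small cases, which is the only genuinely delicate part.
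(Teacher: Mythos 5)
Your proof is correct, and it rests on the same two pillars as the paper's: (1) the decomposition of $Q_0$ by $\cS$-generations --- your super-level sets $\{h\ge k\}$ are exactly the sets $S_k$ the paper builds from the iterated $\cS$-children, and your layer-cake bookkeeping reproduces the paper's display \eqref{E:SpBMO1}; and (2) the key observation that applying the Carleson condition \emph{at a cube $R\in\cS$} leaves only $(\Lambda-1)|R|$ for the strictly smaller $\cS$-cubes inside $R$. Where you diverge is in how this observation is deployed at the finish. The paper uses it once, at the generation $n+1=\lfloor\theta\rfloor+1$, to collapse the entire tail into a single term $(\Lambda+n-\theta)|S_{n+1}|$; it then notes that every coefficient $\theta,\ |1-\theta|,\ \dots,\ (\theta-n),\ (\Lambda+n-\theta)$ in the resulting finite sum over the disjoint annuli $Q_0\setminus S_1,\ S_1\setminus S_2,\ \dots,\ S_{n+1}$ is at most $\Lambda$, so the sum is trivially $\le\Lambda|Q_0|$. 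You instead iterate the observation to obtain the geometric recursion $b_{k+1}\le\frac{\Lambda-1}{\Lambda}b_k$, reduce to bounding $2b_m$ (with $m=\lfloor c\rfloor+1$), and then have to optimize: Bernoulli for $m\le\Lambda$, the $(1-1/m)^{m-1}\le\tfrac12$ estimate for $m>\Lambda$, and a separate $c<1$ case. All of this is verifiable --- I checked your identity $\int_{Q_0}|h-c|=2b_m+2(\lfloor c\rfloor-c)a_m$, the inequality $a_1\ge c|Q_0|/\Lambda$ via $b_1\le\Lambda a_1$, and the two sub-cases of Case 2 --- so the argument is sound, but it is noticeably heavier than the paper's one-line coefficient bound. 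One wrinkle worth flagging for precision rather than correctness: the parenthetical ``take $R$ to be the smallest $\cS$-cube through a given point'' is not quite what you want; the relevant $R$ is the unique $\cS$-cube at depth $k$ through $x$ (the smallest one could be much deeper if $h(x)>k$). The decomposition you need is that $\{h\ge k\}$ is the disjoint union of the $k$-th generation $\cS$-cubes, and your subsequent use of it is correct, so this is a wording issue only. Your own diagnosis of where the difficulty lies is accurate: the combinatorial core is identical to the paper's, and the delicate part of your version is entirely the numerical optimization needed to recover $\Lambda$ rather than $2\Lambda$; the paper avoids that optimization by not telescoping past the point where the coefficients become transparent.
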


\vspace{0.1in}
\begin{center}
$\ast$
\end{center}
\vspace{0.1in}

This process works to yield a weighted BMO function as well: with any $\cS\in\Upsilon^\cD(\bR^n)$ and $w\in A_p^\cD$ we associate the function
	$$
	b_\cS^w := \sum_{Q\in\cS} \La w\Ra_Q \unit_Q.
	$$
As before, $\cS\in\Upsilon^\cD(\bR^n)$ ensures that $b_\cS^w$ is a.e. finite, locally integrable, and
	$$
	\unit_{Q_0}(b_\cS^w-\La b_\cS^w\Ra_{Q_0}) = \sum_{Q\in\cS, Q\subsetneq Q_0} \La w\Ra_Q \unit_Q - 
	\frac{\unit_{Q_0}}{|Q_0|} \sum_{Q\in\cS, Q\subsetneq Q_0} w(Q), \:\:\forall Q_0\in\cD.
	$$
By \eqref{E:w-sparse},
	$$
	\frac{1}{|Q_0|} \sum_{Q\in\cS, Q\subsetneq Q_0} w(Q) \leq [w]_{A_p} \Lambda^p \La w\Ra_{Q_0},
	$$
which then easily gives
	$$
	\frac{1}{w(Q_0)} \int_{Q_0}|b_\cS^w - \La b_\cS^w\Ra_{Q_0}|\,dx \leq 2[w]_{A_p} \Lambda^p,
	$$
so
	$$
	b_\cS^w \in BMO^\cD(w), \text{ with } \|b_\cS^w\|_{BMO^\cD(w)} \leq 2[w]_{A_p}\Lambda^p.
	$$


\subsection{Sparse Operators as Sums of Paraproducts and Martingale Transform.}
\label{sS:sparse=sum}
For ease of notation we work in $\bR$ below, but the obvious analog for $\bR^n$ follows easily in the same way. Consider
	$$
	\cA_\cS^w f := \sum_{I\in\cS} \La w\Ra_I \La f\Ra_I \unit_I,
	$$
where $\cS\in\Upsilon^\cD(\bR)$ and $w$ is an $A_p^\cD$ weight on $\bR$, $1<p<\infty$. 
A particularly interesting instance of $\cA_\cS^w$ occurs when $w=\nu \in A_2^\cD$, where $\nu:=\mu^{1/p}\lambda^{-1/p}$ for two weights $\mu, \lambda \in A_p^\cD$. We treat this operator in more detail in Section \ref{sS:BSpBd}.

Using the $b_\cS^w$ function associated with $\cS$ and $w$, we write
	\begin{equation}
	\label{E:sum}
	\cA_\cS^w f = \cA^w_\cS - b_\cS^w \cdot f + b_\cS^w \cdot f 
		= \cA_\cS^w f - b_\cS^w \cdot f + \big(\Pi_{b_\cS^w}f + \Pi_{b_\cS^w}^* f + \Pi_f b_\cS^w\big).
	\end{equation}
Now recall that
	$$
	\La b_\cS^w\Ra_{J_0} = (\tau^w_\cS)_{J_0} + \sum_{J\in\cS, J \supset J_0}\La w\Ra_J, \:\:\forall J_0 \in \cD,
	$$
where
	$$
	(\tau_\cS^w)_{J} := \frac{1}{|J|} \sum_{I\in\cS, I \subsetneq J} w(I), \:\:\forall J\in\cD,
	$$
a quantity always bounded if $w\in A_p^\cD$:
	$$
	(\tau_\cS^w)_{J} \leq [w]_{A_p} \Lambda^p \La w\Ra_J.
	$$
So:
	\begin{eqnarray*}
	\Pi_f b_\cS^w (x) &=& \sum_{J\in\cD} (f, h_J) \La b_\cS^w\Ra_J h_J(x) \\
		&=& \sum_{J\in\cD} (f, h_J) \big[ (\tau_\cS^w)_J + \sum_{K\in\cS, K\supset J} \La w\Ra_K \big] h_J(x)\\
		&=& \underbrace{(\tau_\cS^w)_J (f, h_J) h_J(x)}_{=: T_{\tau_\cS^w}f(x)} + \sum_{J\in\cD} (f, h_J) h_J(x) \bigg(\sum_{K\in\cS, K\supset J}\La w\Ra_K\bigg).
	\end{eqnarray*}
The second term can be further explored as
	\begin{eqnarray*}
	\sum_{J\in\cD} (f, h_J) h_J(x) \bigg(\sum_{K\in\cS, K\supset J}\La w\Ra_K\bigg) &=& \sum_{K\in\cS} \La w\Ra_K \bigg(
		\sum_{J\subset K} (f, h_J) h_J(x)\bigg)\\
		&=& \sum_{K\in\cS} \La w\Ra_K \bigg(f(x) - \La f\Ra_K\bigg)\unit_K(x)\\
		&=& f(x) \cdot \sum_{K\in\cS} \La w\Ra_K \unit_K(x) - \sum_{K\in\cS} \La w\Ra_K \La f\Ra_K \unit_K(x)\\
		&=& f(x)\cdot b_\cS^w(x) - \cA_\cS^w f(x).
	\end{eqnarray*}
	
Returning to \eqref{E:sum}:
	$$
	\cA_\cS^w f = \cA_\cS^w f - b_\cS^w \cdot f + (\Pi_{b_\cS^w}f + \Pi^*_{b_\cS^w}f) + T_{\tau_\cS^w}f + f\cdot b_\cS^w - \cA_\cS^w f,
	$$
so we have:

\begin{prop}
\label{P:SPM}
Any weighted sparse operator $\cA_\cS^w$, where $w\in A_p^\cD$ is a weight on $\bR$ and $\cS\in\Upsilon^\cD(\bR)$ is a sparse collection with no infinitely increasing chains, may be expressed as
	\begin{equation}
	\label{E:SPM}
	\cA_\cS^w f = \Pi_{b_\cS^w}f + \Pi^*_{b_\cS^w}f + T_{\tau_\cS^w}f,
	\end{equation}
where the first two terms are the paraproducts with symbol $b_\cS^w$, the sparse $BMO^\cD(w)$ function associated with $\cS$ and $w$, and the third term is
	$$
	T_{\tau_\cS^w} f(x) := \sum_{J\in\cD} (\tau^w_\cS)_J (f, h_J)h_J(x),\text{ where } 
	(\tau^w_\cS)_J := \frac{1}{|J|}\sum_{I\in\cS, I\subsetneq J} w(I) \leq [w]_{A_p}\Lambda^p \La w\Ra_J, \forall J\in \cD.
	$$
\end{prop}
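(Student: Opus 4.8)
The identity is algebraic once the two ingredients recorded above are in hand, so the plan is to assemble them carefully and dispatch the (mild) convergence issues. First I would reduce to $f$ bounded with compact support and finite Haar expansion: for such $f$ every sum below collapses, after grouping, to a finite sum, all interchanges of summation are automatic, and the product $b_\cS^w\cdot f$ together with the paraproduct $\Pi_f b_\cS^w$ are unambiguous even though $b_\cS^w$ lies in $BMO^\cD(w)$ rather than $L^\infty$ (recall $\cS\in\Upsilon^\cD(\bR)$ guarantees $b_\cS^w$ is a.e. finite and locally integrable). The identity \eqref{E:SPM} for such $f$ then extends to all $f\in L^p(w)$ by density, using that the Haar system is an unconditional basis of $L^p(w)$ and that each of $\cA_\cS^w$, $\Pi_{b_\cS^w}$, $\Pi^*_{b_\cS^w}$, $T_{\tau_\cS^w}$ is bounded on $L^p(w)$ — for $T_{\tau_\cS^w}$ this is because $(\tau_\cS^w)_J\leq[w]_{A_p}\Lambda^p\La w\Ra_J$, which follows from \eqref{E:w-sparse} and the fact that a $\Lambda$-Carleson collection is $1/\Lambda$-sparse (bound $w(I)$ by $\Lambda^p[w]_{A_p}w(E_I)$ and use disjointness of the $E_I\subset I\subsetneq J$).

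Second, the two ingredients. (i) The pointwise product formula $b_\cS^w\cdot f = \Pi_{b_\cS^w}f + \Pi^*_{b_\cS^w}f + \Pi_f b_\cS^w$, valid for any symbol in $BMO^\cD$. (ii) The average formula for $b_\cS^w$: expanding $b_\cS^w=\sum_{Q\in\cS}\La w\Ra_Q\unit_Q$ and splitting the cubes $Q\in\cS$ according to whether $Q\subsetneq J_0$, $Q\supseteq J_0$, or $Q\cap J_0=\emptyset$,
	$$
	\La b_\cS^w\Ra_{J_0} = (\tau_\cS^w)_{J_0} + \sum_{Q\in\cS,\,Q\supseteq J_0}\La w\Ra_Q, \qquad
	(\tau_\cS^w)_{J_0} = \frac{1}{|J_0|}\sum_{I\in\cS,\,I\subsetneq J_0}w(I).
	$$

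Third, I would substitute (ii) into $\Pi_f b_\cS^w=\sum_{J\in\cD}(f,h_J)\La b_\cS^w\Ra_J h_J$, splitting it as $T_{\tau_\cS^w}f$ plus $\sum_{J\in\cD}(f,h_J)h_J\big(\sum_{K\in\cS,\,K\supseteq J}\La w\Ra_K\big)$. Interchanging the order of summation in the second piece and invoking the telescoping identity $\sum_{J\subseteq K}(f,h_J)h_J = (f-\La f\Ra_K)\unit_K$ — immediate from $\La f\Ra_K=\sum_{R\supsetneq K}(f,h_R)h_R(K)$ — gives
	$$
	\sum_{J\in\cD}(f,h_J)h_J\sum_{K\in\cS,\,K\supseteq J}\La w\Ra_K = \sum_{K\in\cS}\La w\Ra_K(f-\La f\Ra_K)\unit_K = f\cdot b_\cS^w - \cA_\cS^w f,
	$$
that is, $\Pi_f b_\cS^w = T_{\tau_\cS^w}f + f\cdot b_\cS^w - \cA_\cS^w f$. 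Plugging this into the trivial identity $\cA_\cS^w f = \cA_\cS^w f - b_\cS^w\cdot f + (\Pi_{b_\cS^w}f+\Pi^*_{b_\cS^w}f+\Pi_f b_\cS^w)$ and cancelling the two pairs $\pm\cA_\cS^w f$ and $\mp\, b_\cS^w\cdot f$ yields \eqref{E:SPM}.

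The only real point requiring care — the \emph{hard part}, though it is minor — is the legitimacy of the rearrangement of the double sum and the passage from finite-expansion $f$ to general $f\in L^p(w)$: one must check that no convergence is lost given that $b_\cS^w$ is unbounded. This is precisely where the unconditional basis property of the Haar system in $L^p(w)$, together with the a priori $L^p(w)$-boundedness of all four operators, is used; everything else is bookkeeping.
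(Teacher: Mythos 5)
Your core argument is exactly the paper's: start from $\cA_\cS^w f = \cA_\cS^w f - b_\cS^w f + (\Pi_{b_\cS^w}f + \Pi^*_{b_\cS^w}f + \Pi_f b_\cS^w)$, split $\La b_\cS^w\Ra_J = (\tau^w_\cS)_J + \sum_{K\in\cS,\,K\supseteq J}\La w\Ra_K$, interchange the order of summation, telescope with $\sum_{J\subseteq K}(f,h_J)h_J = (f-\La f\Ra_K)\unit_K$, and cancel. That is the paper's proof verbatim.

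The one place you overclaim is in the auxiliary density step. You assert that $\Pi_{b_\cS^w}$, $\Pi^*_{b_\cS^w}$, and $T_{\tau_\cS^w}$ are each bounded on $L^p(w)$, but none of this is established. The symbol $b_\cS^w$ lies in $BMO^\cD(w)$ rather than in $BMO^\cD$ (indeed $b_\cS^w\notin BMO^\cD$ in general, since $\frac{1}{|Q_0|}\sum_{Q\in\cS,Q\subsetneq Q_0}w(Q)$ is only controlled by $[w]_{A_p}\Lambda^p\La w\Ra_{Q_0}$, which is unbounded), and the one-weight paraproduct bound requires a symbol in unweighted $BMO^\cD$. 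Likewise $(\tau_\cS^w)_J\lesssim\La w\Ra_J$ is \emph{not} a uniform $\ell^\infty$ bound, so $T_{\tau_\cS^w}$ is not a martingale transform in the usual sense and its $L^p(w)$-boundedness does not follow from the stated inequality. None of this damages the proposition, which is a pointwise algebraic identity rather than a boundedness statement: your reduction to $f$ of finite Haar support already handles the rearrangements as finite sums, and one can then simply observe the identity holds a.e.\ for every locally integrable $f$ for which both sides make sense, without appealing to density in $L^p(w)$. The paper makes no boundedness claims here either.
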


\begin{remark}
In case $w\equiv 1$, we obtain the unweighted situation
	\begin{equation}
	\label{E:SPMu}
	\cA_\cS f = \Pi_{b_\cS}f + \Pi^*_{b_\cS}f + T_{\tau_\cS}f,
	\end{equation}
where $T_{\tau_\cS}$ is a martingale transform:
	$$
	T_{\tau_\cS} = \sum_{J\in\cD} (\tau_\cS)_J (f, h_J)h_J, \text{ where } (\tau_\cS)_J := \frac{1}{|J|}\sum_{I\in\cS, I\subsetneq J}|I| \leq \Lambda, \:\:\forall J\in\cD.
	$$
\end{remark}

\begin{remark}
In fact, \eqref{E:SPM} expresses sparse operators as Haar multipliers: recall that a Haar multiplier is an operator of the form
	$$
	T_\phi f(x) := \sum_{J\in\cD} \phi_J(x) (f, h_J) h_J(x),
	$$
where $\{\phi_J(x)\}_{J\in\cD}$ is a sequence of functions indexed by $\cD$. It is known that (see \cite{Blasco}):
	$$
	(\Pi_b + \Pi_b^*)f = \sum_J (b - \La b\Ra_J) (f, h_J)h_J.
	$$
So, from \eqref{E:SPM}:
	$$
	\cA_\cS^w f (x) = \bigg[ 
	\underbrace{(b_\cS^w(x) - \La b_\cS^w\Ra_J)\unit_J(x)  + (\tau_\cS^w)_J}_{\phi_J(x)}
	\bigg](f, h_J)h_J(x).
	$$
\end{remark}

\vspace{0.1in}
\begin{center}
$\ast$
\end{center}
\vspace{0.1in}

Look more closely now at \eqref{E:SPMu}: $\cA_\cS = \Pi_{b_\cS}+\Pi_{b_\cS}^* + T_{\tau_\cS}$. This gives an \textit{upper bound} for 
$\|\cA_\cS : L^p(w)\rightarrow L^p(w)\|$ in terms of the norms of paraproducts and martingale transform -- when usually it is the norms of sparse operators that are used as upper bounds:
	$$
	\|\cA_\cS f\|_{L^p(w)} \leq \|\Pi_{b_\cS}f\|_{L^p(w)} + \|\Pi^*_{b_\cS}f\|_{L^p(w)} + \|T_{\tau_\cS}f\|_{L^p(w)}.
	$$
Divide above by $\Lambda_{(\cS)} := \Lambda$, the Carleson constant of $\cS$, and recall that $\|b_\cS\|_{BMO^\cD}\leq\Lambda$, as well as 
$\|\tau_\cS\|_\infty \leq\Lambda$:
	\begin{eqnarray*}
	\frac{\|\cA_\cS f\|_{L^p(w)}}{\Lambda} &\leq& \frac{\|\Pi_{b_\cS}f\|_{L^p(w)}}{\Lambda} + \frac{\|\Pi^*_{b_\cS}f\|_{L^p(w)}}{\Lambda} +
				\frac{\|T_{\tau_\cS}f\|_{L^p(w)}}{\Lambda}\\
	&\leq& \frac{\|\Pi_{b_\cS}f\|_{L^p(w)}}{\|b_\cS\|_{BMO^\cD}} + \frac{\|\Pi^*_{b_\cS}f\|_{L^p(w)}}{\|b_\cS\|_{BMO^\cD}} +
				\frac{\|T_{\tau_\cS}f\|_{L^p(w)}}{\|\tau_\cS\|_\infty},
	\end{eqnarray*}
from which we can deduce that, for all $\Lambda>1$:
	\begin{eqnarray*}
	\sup_{\substack{\cS\in\Upsilon^\cD(\bR) \\ \Lambda_{(\cS)}=\Lambda}} \frac{\|\cA_\cS : L^p(w)\rightarrow L^p(w)\|}{\Lambda} &\leq&
		\sup_{\substack{b\in BMO^\cD \\ \|b\|_{BMO^\cD}\neq 0}} \frac{\|\Pi_b : L^p(w)\rightarrow L^p(w)\|}{\|b\|_{BMO^\cD}}
		+ \sup_{\substack{b\in BMO^\cD \\ \|b\|_{BMO^\cD}\neq 0}} \frac{\|\Pi_b^* : L^p(w)\rightarrow L^p(w)\|}{\|b\|_{BMO^\cD}}\\
		&&+ \sup_{\substack{\tau\in\ell^\infty\\ \|\tau\|_\infty\neq 0}} \frac{\|T_\tau : L^p(w)\rightarrow L^p(w)\|}{\|\tau\|_\infty}.
	\end{eqnarray*}
Given the well-known domination results \cite{Lacey} for the martingale transform and paraproducts:
	\begin{eqnarray*}
	\sup_{\substack{\cS\in\Upsilon^\cD(\bR) \\ \Lambda_{(\cS)}=\Lambda}} \|\cA_\cS : L^p(w)\rightarrow L^p(w)\| &\simeq_{\Lambda,p}&
		\sup_{\substack{b\in BMO^\cD \\ \|b\|_{BMO^\cD}\neq 0}} \frac{\|\Pi_b : L^p(w)\rightarrow L^p(w)\|}{\|b\|_{BMO^\cD}}
		+ \sup_{\substack{b\in BMO^\cD \\ \|b\|_{BMO^\cD}\neq 0}} \frac{\|\Pi_b^* : L^p(w)\rightarrow L^p(w)\|}{\|b\|_{BMO^\cD}}\\
		&&+ \sup_{\substack{\tau\in\ell^\infty\\ \|\tau\|_\infty\neq 0}} \frac{\|T_\tau : L^p(w)\rightarrow L^p(w)\|}{\|\tau\|_\infty}.
	\end{eqnarray*}
	
\begin{remark}
It would be interesting if the martingale and paraproducts can be ``separated'' somehow, and to obtain independently that paraproducts and martingale transforms have the same dependency on $[w]_{A_p}$ by showing they are both equivalent to $\|\cA_\cS\|$. However, we can show that the norms of $\cA_\cS$ are equivalent to norms of certain compositions of paraproducts. We do this next.
\end{remark}

\subsection{Sparse Operators and Compositions of Paraproducts} 
\label{sS:ParaComp}
Consider the composition 
$$
\Pi^*_a\Pi_bf = \sum_{Q\in\cD}(a, h_Q)(b, h_Q)\La f\Ra_Q \frac{\unit_Q}{|Q|}.
$$
We show in Appendix \ref{A:ParaComp}, using a bilinear form argument, that:
	\begin{theorem}
	\label{T:ParaComp}
	There is a dimensional constant $C(n)$ such that for all Bloom weights $\mu,\lambda\in A_p$ ($1<p<\infty$), $\nu:=\mu^{1/p}\lambda^{-1/p}$ on $\bR^n$,
	BMO functions $a\in BMO^\cD(\bR^n)$, $b\in BMO^\cD(\nu)$, and $\Lambda>1$:
	$$
	\|\Pi_a^*\Pi_b :L^p(\mu)\rightarrow L^p(\lambda)\| \leq C(n) \|a\|_{BMO^\cD}\|b\|_{BMO^\cD(\nu)} 
	\sup_{\substack{\cS\in\Upsilon^\cD(\bR^n)\\ \Lambda_{(\cS)}=\Lambda}} \left(\frac{\Lambda}{\Lambda-1}\right)^3 
	\|\cA_\cS^\nu:L^p(\mu) \rightarrow L^p(\lambda)\|.
	$$
	\end{theorem}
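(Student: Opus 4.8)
The plan is to dualize and prove a bilinear sparse bound. Fix $f\in L^p(\mu)$ and $g\in L^{p'}(\lambda')$; using $(L^p(\lambda))^*\simeq L^{p'}(\lambda')$ with the Lebesgue pairing, it suffices to bound
\[
(\Pi_a^*\Pi_b f, g) = \sum_{Q\in\cD}(a,h_Q)(b,h_Q)\La f\Ra_Q\La g\Ra_Q .
\]
Replacing $f,g$ by $|f|,|g|$ (which only increases $\sum_Q|(a,h_Q)|\,|(b,h_Q)|\,|\La f\Ra_Q|\,|\La g\Ra_Q|$, since $|\La f\Ra_Q|\le\La|f|\Ra_Q$) we may assume $f,g\ge 0$, and by the same truncation/limiting device used in Section \ref{S:ParaDom} — approximating $a\in BMO^\cD$ and $b\in BMO^\cD(\nu)$ by finite Haar sums of comparable norm — it is enough to bound, uniformly in $Q_0\in\cD$,
\[
\Sigma(Q_0) := \sum_{Q\subseteq Q_0}|(a,h_Q)|\,|(b,h_Q)|\,\La f\Ra_Q\La g\Ra_Q
\]
by $C(n)\bigl(\tfrac{\Lambda}{\Lambda-1}\bigr)^3\|a\|_{BMO^\cD}\|b\|_{BMO^\cD(\nu)}(\cA_\cS^\nu f, g)$ for a collection $\cS\subseteq\cD(Q_0)$, $\cS\in\Upsilon^\cD(\bR^n)$, with $\Lambda_{(\cS)}\le\Lambda$: letting $Q_0\uparrow\bR^n$, then using $(\cA_\cS^\nu f, g)\le\|\cA_\cS^\nu:L^p(\mu)\to L^p(\lambda)\|\,\|f\|_{L^p(\mu)}\|g\|_{L^{p'}(\lambda')}$ and taking the supremum over $\cS$ gives the theorem (one may always enlarge $\cS$ so that $\Lambda_{(\cS)}=\Lambda$ exactly, which only increases $\cA_\cS^\nu$ pointwise).

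To construct $\cS$, run a stopping time inside $Q_0$. Set $A := 3\Lambda/(\Lambda-1)$, put $Q_0\in\cS$, and for $S\in\cS$ let $\text{ch}_\cS(S)$ consist of the maximal $Q\subsetneq S$ with
\[
\La f\Ra_Q > A\La f\Ra_S, \quad\text{or}\quad \La g\Ra_Q > A\La g\Ra_S, \quad\text{or}\quad \tfrac{1}{|Q|}\int_Q|b-\La b\Ra_S|\,dx > A\|b\|_{BMO^\cD(\nu)}\La\nu\Ra_S .
\]
The structural point is that we stop on the averages of $f$ and $g$ and on the oscillation of $b$, but \emph{not} on $a$ and \emph{not} on $\nu$ — this is what keeps the number of stopping parameters equal to three. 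By the weak type $(1,1)$ bound for the dyadic maximal operator (first two conditions) and $\int_S|b-\La b\Ra_S|\,dx\le\|b\|_{BMO^\cD(\nu)}\nu(S)$ (third condition), $\sum_{P\in\text{ch}_\cS(S)}|P|\le\tfrac{3}{A}|S|=\tfrac{\Lambda-1}{\Lambda}|S|$, so iterating gives $\Lambda_{(\cS)}\le\Lambda$; since $\cS$ has top cube $Q_0$ it has no infinitely increasing chains, so $\cS\in\Upsilon^\cD(\bR^n)$. Decompose $\cD(Q_0)=\bigsqcup_{S\in\cS}\cT(S)$ into coronas, $\cT(S):=\{Q\subseteq S:\ Q\not\subseteq P\ \forall P\in\text{ch}_\cS(S)\}$. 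For every $Q\in\cT(S)$ no stopping inequality holds at $Q$ (for $Q=S$ this is trivial, as $A\ge1$), hence $\La f\Ra_Q\le A\La f\Ra_S$, $\La g\Ra_Q\le A\La g\Ra_S$, and $\tfrac{1}{|Q|}\int_Q|b-\La b\Ra_S|\,dx\le A\|b\|_{BMO^\cD(\nu)}\La\nu\Ra_S$.

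The core of the argument is the corona square-function estimate
\[
\sum_{Q\in\cT(S)}|(b,h_Q)|^2 \le C(n)\,A^2\,\|b\|_{BMO^\cD(\nu)}^2\,\La\nu\Ra_S^2\,|S|, \qquad S\in\cS .
\]
Writing $E_S:=S\setminus\bigcup_{P\in\text{ch}_\cS(S)}P$, one has $\sum_{Q\in\cT(S)}(b,h_Q)h_Q=(b-\La b\Ra_S)\unit_{E_S}+\sum_{P\in\text{ch}_\cS(S)}(\La b\Ra_P-\La b\Ra_S)\unit_P$, so by orthonormality of the Haar system the left side equals $\int_{E_S}|b-\La b\Ra_S|^2\,dx+\sum_P|\La b\Ra_P-\La b\Ra_S|^2|P|$. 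Every sufficiently small dyadic cube containing a given point of $E_S$ lies in $\cT(S)$, so Lebesgue differentiation and the third corona inequality give $|b-\La b\Ra_S|\le A\|b\|_{BMO^\cD(\nu)}\La\nu\Ra_S$ a.e.\ on $E_S$; and for $P\in\text{ch}_\cS(S)$ its dyadic parent $\widehat P$ lies in $\cT(S)$, whence $|\La b\Ra_P-\La b\Ra_S|\le\tfrac{1}{|P|}\int_P|b-\La b\Ra_S|\,dx\le 2^n\,\tfrac{1}{|\widehat P|}\int_{\widehat P}|b-\La b\Ra_S|\,dx\le 2^nA\|b\|_{BMO^\cD(\nu)}\La\nu\Ra_S$. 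Combining these with $\sum_P|P|\le|S|$ proves the estimate. For $a$ one simply invokes the dyadic John--Nirenberg inequality, $\sum_{Q\in\cT(S)}|(a,h_Q)|^2\le\int_S|a-\La a\Ra_S|^2\,dx\le C(n)\|a\|_{BMO^\cD}^2|S|$. Now bound $\La f\Ra_Q\La g\Ra_Q\le A^2\La f\Ra_S\La g\Ra_S$ on each corona, apply Cauchy--Schwarz in $Q$, and insert the two square-function bounds:
\[
\Sigma(Q_0)\le A^2\sum_{S\in\cS}\La f\Ra_S\La g\Ra_S\Bigl(\sum_{Q\in\cT(S)}|(a,h_Q)|^2\Bigr)^{1/2}\Bigl(\sum_{Q\in\cT(S)}|(b,h_Q)|^2\Bigr)^{1/2},
\]
which by the two bounds is at most
\[
C(n)A^3\|a\|_{BMO^\cD}\|b\|_{BMO^\cD(\nu)}\sum_{S\in\cS}\La\nu\Ra_S\La f\Ra_S\La g\Ra_S|S| = C(n)A^3\|a\|_{BMO^\cD}\|b\|_{BMO^\cD(\nu)}(\cA_\cS^\nu f, g).
\]
Since $A^3=27\bigl(\tfrac{\Lambda}{\Lambda-1}\bigr)^3$, absorbing the dimensional constants finishes the bilinear bound and hence, via the reduction above, the theorem.

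I expect the corona square-function estimate for $b$ to be the main obstacle. The global quantity $\sum_{Q\subseteq S}|(b,h_Q)|^2=\int_S|b-\La b\Ra_S|^2\,dx$ is \emph{not} controlled by $\|b\|_{BMO^\cD(\nu)}^2\La\nu\Ra_S^2|S|$ in general (the corresponding weighted $L^2$ John--Nirenberg inequality with a dimensional constant fails), so the stopping time is genuinely needed: the oscillation of $b$ beyond scale $\La\nu\Ra_S$ is pushed into the stopping children and absorbed by the recursion, and the bookkeeping — the children of $S$ are precisely the cubes of $\text{ch}_\cS(S)$ that can violate the corona inequalities, and are reached through their dyadic parents at the cost of a factor $2^n$ — must be done with care. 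The exponent $3$ rests on the asymmetry between $a$ and $b$: stopping on $b$ but using John--Nirenberg for the unweighted $a$, and not stopping on $\nu$, keeps the number of stopping parameters at three; in fact the argument gives the bilinear inequality for any locally integrable weight $\nu>0$, with no use of $\mu,\lambda\in A_p$ beyond the final duality.
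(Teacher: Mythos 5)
Your proof is correct, and it takes a genuinely different route from the paper's, though the two arguments are cousins of the same bilinear stopping-time strategy. Both run a three-parameter corona recursion to obtain the $\bigl(\Lambda/(\Lambda-1)\bigr)^3$ factor, and both ultimately reduce to Cauchy--Schwarz in the Haar coefficients together with a square-function estimate. Where you differ is in \emph{what you stop on} and in how the weighted BMO norm of $b$ is converted into a usable $L^2$ bound. The paper's Appendix \ref{A:ParaComp} stops on the averages $\La|f|\Ra_Q$, $\La|g|\Ra_Q$ and on the weight $\nu$ itself (the collection $\cE_3$ where $\La\nu\Ra_R$ is large), and then invokes the modified Calder\'{o}n--Zygmund/BMO decomposition of \cite{BMOdecomp} to replace $b$ on the good cubes by an auxiliary function $\widetilde{b}\in BMO^\cD$ with $\|\widetilde{b}\|_{BMO^\cD}\lesssim \epsilon^{-1}\La\nu\Ra_{Q_0}\|b\|_{BMO^\cD(\nu)}$; the $L^2$ Haar-coefficient bound then follows from the unweighted John--Nirenberg inequality applied to $\widetilde{b}$. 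You instead stop directly on the $L^1$ mean oscillation $\frac{1}{|Q|}\int_Q|b-\La b\Ra_S|\,dx$ against the threshold $A\|b\|_{BMO^\cD(\nu)}\La\nu\Ra_S$, and prove the corona square-function estimate $\sum_{Q\in\cT(S)}|(b,h_Q)|^2\lesssim A^2\|b\|_{BMO^\cD(\nu)}^2\La\nu\Ra_S^2|S|$ from scratch, via the telescoping identity $\sum_{Q\in\cT(S)}(b,h_Q)h_Q=(b-\La b\Ra_S)\unit_{E_S}+\sum_{P\in\mathrm{ch}_\cS(S)}(\La b\Ra_P-\La b\Ra_S)\unit_P$ and pointwise bounds on each piece using the non-stopping condition (reached, for the children, through the dyadic parent $\widehat{P}\in\cT(S)$ at the cost of $2^n$). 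What your approach buys is self-containment: you avoid the appeal to the external lemma from \cite{BMOdecomp}, and you isolate exactly the $L^2$ estimate the argument needs, which also makes transparent your closing remark that the bilinear inequality requires no $A_p$ hypotheses on $\nu$ --- a feature implicit in the paper's Proposition \ref{P:App2} but less visible there. What the paper's approach buys is brevity at this point in the exposition (the BMO decomposition having been developed elsewhere), and a sparse collection $\cS$ that depends only on $f$, $g$ and the weight $\nu$, not on the symbol $b$. Both constructions yield collections in $\Upsilon^\cD(\bR^n)$ with Carleson constant $\Lambda$, the same power of $\Lambda/(\Lambda-1)$, and the same final bound.

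Two small points, neither of which is a gap but both worth keeping explicit in a final write-up. First, the reduction to finite $Q_0$ requires either a finite-Haar-expansion approximation of $a$ (as the paper does) or of both $a$ and $b$, so that the sums converge absolutely and $|(\Pi_a^*\Pi_b f,g)|$ can legitimately be dominated termwise by the nonnegative series $\Sigma(Q_0)$; you gesture at this but the detail that the finitely-supported symbol confines $Q$ to at most $2^n$ top cubes is the mechanism that produces $\cS\in\Upsilon^\cD(\bR^n)$ rather than a collection with an infinitely increasing chain. Second, the corona bound at $Q=S$ uses $A\ge 1$, which holds since $A=3\Lambda/(\Lambda-1)>3$; you note this, and it should stay, since it is what makes $S\in\cT(S)$ harmless.
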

Some immediate observations about this result:
	\begin{itemize}
	\item From Theorem \ref{T:Bspbd}:
		$$
		\|\Pi_a^*\Pi_b : L^p(\mu)\rightarrow L^p(\lambda)\|\lesssim \|a\|_{BMO^\cD}\|b\|_{BMO^\cD(\nu)}[\mu]_{A_p}^{\frac{1}{p-1}}[\lambda]_{A_p}.
		$$
	\item Take $\mu=\lambda=w$, for some $w\in A_p$. Then $\nu=1$ and we obtain in the one-weight situation:
		\begin{equation}
		\label{E:ParaComp-w}
		\|\Pi_a^*\Pi_b:L^p(w)\rightarrow L^p(w)\|\lesssim \|a\|_{BMO^\cD}\|b\|_{BMO^\cD}[w]_{A_p}^{\max\left(1, \frac{1}{p-1}\right)}.
		\end{equation}
	\item It is easy to see that $\Pi_a^*\Pi_b = \Pi_b^*\Pi_a$, so the same result holds for $\Pi_b^*\Pi_a$, with $b\in BMO^\cD(\nu)$, $a\in BMO^\cD$.
	\end{itemize}

\vspace{0.1in}
\begin{center}
$\ast$
\end{center}
\vspace{0.1in}

Let $\cS\in\Upsilon^\cD(\bR^n)$. We associated with $\cS$ the BMO function $b_\cS=\sum_{Q\in\cS}\unit_Q$. There is another, even more obvious BMO function we can associate with $\cS$:
	$$
	\widetilde{b}_\cS :=\sum_{Q\in\cS}\sqrt{|Q|}h_Q = \sum_{\substack{Q\in\cS\\ \epsilon\neq 1}}\sqrt{|Q|}h_Q^\epsilon.
	$$
For any $Q_0\in\cD$:
	$$
	\frac{1}{|Q_0|}\int_{Q_0}|\widetilde{b}_\cS - \La\widetilde{b}_\cS\Ra_{Q_0}|^2\,dx = 
	\frac{1}{|Q_0|} \sum_{\substack{Q\subset Q_0, Q\in\cS \\ \epsilon\neq 1}}|Q|
	= (2^n-1)\frac{1}{|Q_0|}\sum_{Q\subset Q_0, Q\in\cS}|Q| \leq (2^n-1)\Lambda,
	$$
so
	$$
	\|\widetilde{b}_\cS\|_{BMO^\cD} \leq \sqrt{(2^n-1)\Lambda}.
	$$
Moreover, 
	$$
	\Pi^*_{\widetilde{b}_\cS}\Pi_{\widetilde{b}_\cS}f = \sum_{\substack{Q\in\cD\\ \epsilon\neq 1}} 
	(\widetilde{b}_\cS, h_Q^\epsilon)^2\La f\Ra_Q\frac{\unit_Q}{|Q|}
	= \sum_{\substack{Q\in\cS\\ \epsilon\neq 1}}|Q|\La f\Ra_Q\frac{\unit_Q}{|Q|}
	= (2^n-1)\sum_{Q\in\cS}\La f\Ra_Q\unit_Q,
	$$
so we may express the sparse operator $\cA_\cS$ as
	$$
	\cA_\cS  = \frac{1}{2^n-1} \Pi^*_{\widetilde{b}_\cS}\Pi_{\widetilde{b}_\cS}.
	$$
Then
	$$
	\frac{\|\cA_\cS f\|_{L^p(w)}}{\Lambda} = \frac{1}{2^n-1}\frac{\|\Pi^*_{\widetilde{b}_\cS}\Pi_{\widetilde{b}_\cS}f\|_{L^p(w)}}{\Lambda}
	\leq \frac{\|\Pi^*_{\widetilde{b}_\cS}\Pi_{\widetilde{b}_\cS}f\|_{L^p(w)}}{\|\widetilde{b}_\cS\|_{BMO^\cD}^2}
	\leq \sup_{a,b\in BMO^\cD} \frac{\|\Pi^*_a\Pi_b f\|_{L^p(w)}}{\|a\|_{BMO^\cD} \|b\|_{BMO^\cD}},
	$$
which means that for all $\Lambda>1$:
	$$
	\sup_{\substack{\cS\in\Upsilon^\cD(\bR^n) \\ \Lambda_{(\cS)}=\Lambda}} \frac{\|\cA_\cS:L^p(w)\rightarrow L^p(w)\|}{\Lambda}
	\leq \sup_{a,b\in BMO^\cD} \frac{\|\Pi^*_a\Pi_b :L^p(w)\rightarrow L^p(w)\|}{\|a\|_{BMO^\cD}\|b\|_{BMO^\cD}}.
	$$
Combined with \eqref{E:ParaComp-w}, we have
	$$
	\sup_{\cS\in\Upsilon^\cD(\bR^n)} \|\cA_\cS:L^p(w)\rightarrow L^p(w)\| \simeq_{\Lambda, p, n} \sup_{a,b\in BMO^\cD(\bR^n)} 
	\frac{\|\Pi^*_a\Pi_b :L^p(w)\rightarrow L^p(w)\|}{\|a\|_{BMO^\cD}\|b\|_{BMO^\cD}}.
	$$

\subsection{The Bloom Sparse Operator $\cA_\cS^\nu$} 
\label{sS:BSpBd}
Consider
	$$
	\cA_\cS^\nu f = \sum_{Q\in\cS}\La\nu\Ra_Q\La f\Ra_Q\unit_Q,
	$$
for a sparse collection $\cS\subset\cD(\bR^n)$, where $\mu,\lambda\in A_p$ ($1<p<\infty$) and $\nu:=\mu^{1/p}\lambda^{-1/p}$ are Bloom weights.
In looking to bound this operator $L^p(\mu)\rightarrow L^p(\lambda)$, the first obvious route is to appeal to the known one-weight bounds for the usual, unweighted sparse operator $\cA_\cS f = \sum_{Q\in\cS}\La f\Ra_Q\unit_Q$. We want something like $\|\cA_\cS^\nu f\|_{L^p(\lambda)} \leq C \|f\|_{L^p(\mu)}$, and we use duality to express
	$$
	\|\cA_\cS^\nu f\|_{L^p(\lambda)} = \sup_{\substack{g\in L^{p'}(\lambda') \\ \|g\|_{L^{p'}(\lambda')} \leq 1}} |(\cA_\cS^\nu f, g)|.
	$$
So we look for a bound of the type $|(\cA_\cS^\nu f, g)| \leq C \|f\|_{L^p(\mu)} \|g\|_{L^{p'}(\lambda')}$.

\begin{eqnarray*}
|(\cA_\cS^\nu f, g)| &=& \bigg| \sum_{Q\in\cS} \La \nu\Ra_Q \La f\Ra_Q \La g\Ra_Q |Q| \bigg| 
\leq \sum_{Q\in\cS}\La|f|\Ra_Q\La|g|\Ra_Q \nu(Q)
\leq \int(\sum_{Q\in\cS} \La|f|\Ra_Q\La|g|\Ra_Q\unit_Q)\,d\nu\\
&\leq& \int (\cA_\cS|f|)(\cA_\cS|g|)\mu^{1/p}\lambda^{-1/p}\,dx
\leq \|\cA_\cS |f|\|_{L^p(\mu)}\|\cA_\cS|g|\|_{L^{p'}(\lambda')}\\
&\leq& \|\cA_\cS : L^p(\mu)\rightarrow L^p(\mu)\| \cdot \|\cA_\cS : L^{p'}(\lambda')\rightarrow L^{p'}(\lambda')\| \cdot \|f\|_{L^p(\mu)}\|g\|_{L^{p'}(\lambda')}.
\end{eqnarray*}

This yields the same dependency on the $A_p$ characteristics of $\mu, \lambda$ as obtained in \cite{Lerner} for commutators:
	$$
	\|\cA_\cS^\nu : L^p(\mu)\rightarrow L^p(\lambda)\| \lesssim \big([\mu]_{A_p} [\lambda]_{A_p}\big)^{\max\left(1, \frac{1}{p-1}\right)}
	$$
We give another proof, inspired by the beautiful proof in \cite{DCU} of the $A_2$ conjecture for usual unweighted sparse operators, which yields a smaller bound.

\begin{theorem}
\label{T:Bspbd}
Let $\cS\subset\cD$ be a sparse collection of dyadic cubes, $\mu, \lambda\in A_p^\cD$, $1<p<\infty$ be two $A_p$ weights on $\bR^n$, and $\nu:=\mu^{1/p}\lambda^{-1/p}$. Then the Bloom sparse operator
	$$
	\cA_\cS^\nu f := \sum_{Q\in\cS}\La\nu\Ra_Q\La f\Ra_Q\unit_Q
	$$
is bounded $L^p(\mu)\rightarrow L^p(\lambda)$ with
	\begin{equation}
	\label{E:Bspbd}
	\|\cA_\cS^\nu : L^p(\mu) \rightarrow L^p(\lambda)\| \leq \Lambda^{p'+p-2} (p')^2 [\mu']_{A_{p'}} [\lambda]_{A_p}
	= \Lambda^{p'+p-2} (pp') [\mu]_{A_{p}}^{\frac{1}{p-1}} [\lambda]_{A_p},
	\end{equation}
where $\Lambda$ is the Carleson constant of $\cS$.
\end{theorem}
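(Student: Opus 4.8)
The plan is to prove \eqref{E:Bspbd} by duality, in the spirit of the sparse--domination argument of \cite{DCU}. Using $(L^p(\lambda))^*\simeq L^{p'}(\lambda')$ with the Lebesgue pairing and the positivity of $\cA_\cS^\nu$, it suffices to estimate
$$
(\cA_\cS^\nu f, g)=\sum_{Q\in\cS}\La\nu\Ra_Q\La f\Ra_Q\La g\Ra_Q\,|Q|,\qquad f,g\ge 0,\ \|g\|_{L^{p'}(\lambda')}\le 1,
$$
and take the supremum over such $g$ at the end. The key device is to write the Lebesgue averages of $f$ and $g$ as \emph{weighted} averages with respect to two \emph{different} auxiliary measures, chosen asymmetrically -- the dual source weight $\mu':=\mu^{1-p'}$ for $f$, and the target weight $\lambda$ itself for $g$:
$$
\La f\Ra_Q=\La\mu'\Ra_Q\,\bE_Q^{\mu'}\!\big(f(\mu')^{-1}\big),\qquad \La g\Ra_Q=\La\lambda\Ra_Q\,\bE_Q^{\lambda}\!\big(g\lambda^{-1}\big).
$$
These are engineered so that, via $(\mu')^{1-p}=\mu$ and $\lambda^{1-p'}=\lambda'$, one has the exact identities $\|f(\mu')^{-1}\|_{L^p(\mu')}=\|f\|_{L^p(\mu)}$ and $\|g\lambda^{-1}\|_{L^{p'}(\lambda)}=\|g\|_{L^{p'}(\lambda')}$.

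The crux is then the distribution of the weight factor $\La\nu\Ra_Q\La\mu'\Ra_Q\La\lambda\Ra_Q\,|Q|$. I would first bound $\La\nu\Ra_Q\le\La\mu\Ra_Q^{1/p}\La\lambda'\Ra_Q^{1/p'}$ by Hölder applied to $\int_Q\mu^{1/p}\lambda^{-1/p}\,dx$, and then regroup the resulting six averaging factors:
$$
\La\nu\Ra_Q\La\mu'\Ra_Q\La\lambda\Ra_Q\,|Q|\;\le\;\underbrace{\big(\La\mu\Ra_Q\La\mu'\Ra_Q^{p-1}\big)^{1/p}}_{\le\,[\mu]_{A_p}^{1/p}}\;\underbrace{\big(\La\lambda\Ra_Q\La\lambda'\Ra_Q^{p-1}\big)^{1/p}}_{\le\,[\lambda]_{A_p}^{1/p}}\;\La\mu'\Ra_Q^{1/p}\La\lambda\Ra_Q^{1/p'}|Q|,
$$
where the first two brackets are bounded by the $A_p^\cD$ conditions for $\mu$ and $\lambda$, while the leftover collapses to the clean residual $\mu'(Q)^{1/p}\lambda(Q)^{1/p'}$. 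Combining with the weighted-average rewriting,
$$
(\cA_\cS^\nu f,g)\;\le\;[\mu]_{A_p}^{1/p}[\lambda]_{A_p}^{1/p}\sum_{Q\in\cS}\mu'(Q)^{1/p}\lambda(Q)^{1/p'}\,\bE_Q^{\mu'}\!\big(f(\mu')^{-1}\big)\,\bE_Q^{\lambda}\!\big(g\lambda^{-1}\big).
$$

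To finish, apply Hölder over $Q\in\cS$ with exponents $p,p'$, splitting the sum into
$$
\Big(\sum_{Q\in\cS}\big(\bE_Q^{\mu'}(f(\mu')^{-1})\big)^{p}\mu'(Q)\Big)^{1/p}\Big(\sum_{Q\in\cS}\big(\bE_Q^{\lambda}(g\lambda^{-1})\big)^{p'}\lambda(Q)\Big)^{1/p'},
$$
and estimate each factor by the dyadic Carleson embedding theorem. For this one checks, from \eqref{E:w-sparse} applied to $\mu'\in A_{p'}^\cD$ (with exponent $p'$) and to $\lambda\in A_p^\cD$, together with the fact that a $\Lambda$-Carleson family is $\tfrac1\Lambda$-sparse, that $\cS$ is Carleson with respect to $\mu'$ with constant $\Lambda^{p'}[\mu']_{A_{p'}}$ and with respect to $\lambda$ with constant $\Lambda^{p}[\lambda]_{A_p}$. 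Feeding these into the embedding theorem (constants $(p')^{p}$ and $p^{p'}$ at exponents $p$ and $p'$ respectively), using the two norm identities above together with $\|g\|_{L^{p'}(\lambda')}\le 1$ and $[\mu']_{A_{p'}}=[\mu]_{A_p}^{1/(p-1)}$, and simplifying via $\tfrac1p+\tfrac1{p'}=1$, $\tfrac{p'}{p}=p'-1$, $\tfrac{p}{p'}=p-1$, the $A_p$-powers combine to $[\mu']_{A_{p'}}[\lambda]_{A_p}$, the Carleson exponents to $\Lambda^{p+p'-2}$, and the $p$-dependent constant to $pp'$; taking the supremum over $g$ gives \eqref{E:Bspbd}.

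I expect the main obstacle to be the second step -- recognizing the correct \emph{asymmetric} transport ($f$ against $\mu'$, $g$ against $\lambda$, rather than the naive symmetric choices) and the matching partition of $\La\nu\Ra_Q\La\mu'\Ra_Q\La\lambda\Ra_Q|Q|$, after a single Hölder bound on $\La\nu\Ra_Q$, into two $A_p$-testing pieces plus the residual $\mu'(Q)^{1/p}\lambda(Q)^{1/p'}$, whose $1/p$--$1/p'$ exponent split is exactly what lets the subsequent Hölder over $\cS$ and the two Carleson embeddings close with linear dependence on each of $[\mu']_{A_{p'}}$ and $[\lambda]_{A_p}$. Everything else -- the $L^p$-duality reduction, the unweighted-to-weighted sparse estimate \eqref{E:w-sparse}, and the Carleson embedding theorem -- is standard; the remaining work is just the bookkeeping of the $p$, $p'$, $\Lambda$ constants needed to land exactly on \eqref{E:Bspbd}.
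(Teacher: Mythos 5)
Your proposal is correct and follows essentially the same route as the paper: the substitution that turns Lebesgue averages of $f$ and $g$ into $\mu'$- and $\lambda$-weighted averages (your $f\mapsto f(\mu')^{-1}$, $g\mapsto g\lambda^{-1}$ is the paper's change of variables $\varphi=f\mu'$ and $L^p(\lambda)$--$L^{p'}(\lambda)$ duality written differently), the H\"{o}lder bound $\La\nu\Ra_Q\le\La\mu\Ra_Q^{1/p}\La\lambda'\Ra_Q^{1/p'}$, the regrouping into two $A_p$-testing factors plus the residual $\mu'(Q)^{1/p}\lambda(Q)^{1/p'}$, and H\"{o}lder over $\cS$ are all identical. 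The only cosmetic difference is that you invoke the weighted dyadic Carleson embedding theorem (whose hypotheses you correctly verify via \eqref{E:w-sparse}) as a black box, whereas the paper carries out that step inline by passing to the disjoint sets $E_Q$ and applying the $L^q(u)$-boundedness of $M_u^\cD$ with constant $q'$ -- these yield exactly the same constants.
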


\begin{proof}
In looking for a bound of the type $\|\cA_\cS^\nu f\|_{L^p(\lambda)}\leq C\|f\|_{L^p(\mu)}$, consider instead $\varphi:=f\mu'$: then 
$\|\varphi\|_{L^p(\mu)} = \|f\|_{L^p(\mu')}$, so we look instead for a bound of the type
$\|\cA_\cS^\nu(f\mu')\|_{L^p(\lambda)}\leq C\|f\|_{L^{p}(\mu')}$. Using the standard $L^p(\lambda) - L^{p'}(\lambda)$ duality with 
$(\cdot,\cdot)_{d\lambda}$ inner product, we write
	$$
	\|\cA_\cS^\nu(f\mu')\|_{L^p(\lambda)} = \sup_{\substack{g\in L^{p'}(\lambda)\\ \|g\|_{L^{p'}(\lambda)}\leq 1}} |(\cA_\cS^\nu (f\mu'), g\lambda)|,
	$$
meaning we finally look for a bound of the type
	$$
	|(\cA_\cS^\nu(f\mu'), g\lambda)| \leq C \|f\|_{L^p(\mu')}\|g\|_{L^{p'}(\lambda)}.
	$$
	
As in \cite{DCU}, we make use of the weighted dyadic maximal function:
	$$
	M_u^\cD f(x) := \sup_{Q\in\cD} \bE_Q^u |f| \unit_Q(x),
	$$
and its property of being $L^q(u)$-bounded with a constant independent of $u$:
	\begin{theorem}
	For any locally finite Borel measure $u$ on $\bR^n$ and any $q\in(1,\infty)$:
		\begin{equation}
		\label{E:wMax}
		\|M_u^\cD : L^q(u)\rightarrow L^q(u) \| \leq q'.
		\end{equation}
	\end{theorem}
\noindent See, for example, \cite{HytonenLectures} for a proof of this fact.

Now:
	$$
	|(\cA_\cS^\nu (f\mu'), g\lambda)| = |\sum_{Q\in\cS} \La\nu\Ra_Q \La f\mu'\Ra_Q \La g\lambda\Ra_Q|Q||
	\leq \sum_{Q\in\cS} \La|f|\mu'\Ra_Q \La|g|\lambda\Ra_Q \La\nu\Ra_Q|Q|.
	$$
We express the averages involving $f$ and $g$ as weighted averages:
	$$
	\sum_{Q\in\cS} \La|f|\mu'\Ra_Q \La|g|\lambda\Ra_Q \La\nu\Ra_Q|Q| =
	\sum_{Q\in\cS} \bigg(\bE_Q^{\mu'}|f|\bigg) \La\mu'\Ra_Q \bigg(\bE_Q^\lambda|g|\bigg) \La\lambda\Ra_Q \La\nu\Ra_Q|Q|.
	$$
Apply the fact that $\La\nu\Ra_Q\leq \La\mu\Ra_Q^{1/p}\La\lambda'\Ra_Q^{1/p'}$ (an easy consequence of H\"{o}lder's inequality), and the fact that for any $A_p$ weight $w$, we have 
	$$[w]^{1/p}_{A_p} =\sup_Q \La w\Ra_Q^{1/p}\La w'\Ra_Q^{1/p'},$$
to go further:
	\begin{eqnarray*}
	|(\cA_\cS^\nu (f\mu'), g\lambda)| &\leq& 
		\sum_{Q\in\cS} \bigg(\bE_Q^{\mu'}|f|\bigg) \bigg(\bE_Q^\lambda|g|\bigg) \La\mu'\Ra_Q  \La\lambda\Ra_Q \La\mu\Ra_Q^{1/p}\La\lambda'\Ra_Q^{1/p'}|Q|\\
	&\leq& [\mu]_{A_p}^{1/p}[\lambda]_{A_p}^{1/p} \sum_{Q\in\cS} 
		\bigg(\bE_Q^{\mu'}|f|\bigg) \bigg(\bE_Q^\lambda|g|\bigg) \La\mu'\Ra_Q^{1/p}\La\lambda\Ra_Q^{1/p'}|Q|\\
	&=& [\mu]_{A_p}^{1/p}[\lambda]_{A_p}^{1/p} \sum_{Q\in\cS} 
		\bigg(\bE_Q^{\mu'}|f|\bigg) \bigg(\bE_Q^\lambda|g|\bigg) \mu'(Q)^{1/p}\lambda(Q)^{1/p'}\\
	&\leq& [\mu]_{A_p}^{1/p}[\lambda]_{A_p}^{1/p} 
		\left(\sum_{Q\in\cS} \bigg(\bE_Q^{\mu'}|f|\bigg)^p \mu'(Q)\right)^{1/p}
		\left(\sum_{Q\in\cS} \bigg(\bE_Q^\lambda|g|\bigg)^{p'}\lambda(Q)\right)^{1/p'}
	\end{eqnarray*}
Now apply \eqref{E:w-sparse}:
	$$
	\mu'(Q)\leq[\mu']_{A_{p'}}\lambda^{p'}\mu'(E_Q) = [\mu]_{A_p}^{p'-1}\Lambda^{p'}\mu'(E_Q)
	\text{ and } \lambda(Q)\leq[\lambda]_{A_p}\Lambda^p\lambda(E_Q),
	$$
so we may later use disjointness of the sets $\{E_Q\}_{Q\in\cS}$.

\begin{minipage}[t]{0.475\textwidth}
\begin{eqnarray*}
&&\left(\sum_{Q\in\cS} \bigg(\bE_Q^{\mu'}|f|\bigg)^p \mu'(Q)\right)^{1/p} \\
&\leq& [\mu]_{A_p}^{\frac{p'-1}{p}}\Lambda^{p'/p}\left(\sum_{Q\in\cS} \bigg(\bE_Q^{\mu'}|f|\bigg)^p \mu'(E_Q)\right)^{1/p}\\
&=& [\mu]_{A_p}^{\frac{p'-1}{p}}\Lambda^{p'/p} \left(\sum_{Q\in\cS} \int_{E_Q} \bigg(\bE_Q^{\mu'}|f|\bigg)^p \,d\mu' \right)^{1/p}\\
&\leq& [\mu]_{A_p}^{\frac{p'-1}{p}}\Lambda^{p'/p} \left(\sum_{Q\in\cS} \int_{E_Q} \bigg(M_{\mu'}^\cD f\bigg)^p \,d\mu'\right)^{1/p}\\
&\leq& [\mu]_{A_p}^{\frac{p'-1}{p}}\Lambda^{p'/p} \left(\int_{\bR^n}\bigg(M_{\mu'}^\cD f\bigg)^p\,d\mu'\right)^{1/p}\\
&=& [\mu]_{A_p}^{\frac{p'-1}{p}}\Lambda^{p'/p} \|M_{\mu'}^\cD f\|_{L^p(\mu')}\\
&\leq& [\mu]_{A_p}^{\frac{p'-1}{p}}\Lambda^{p'/p} p' \|f\|_{L^p(\mu')}
\end{eqnarray*}
\end{minipage}
\hfill
\noindent\begin{minipage}[t]{0.475\textwidth}
\begin{eqnarray*}
&& \left(\sum_{Q\in\cS} \bigg(\bE_Q^\lambda|g|\bigg)^{p'}\lambda(Q)\right)^{1/p'}\\
&\leq& [\lambda_{A_p}^{1/p'}\Lambda^{p/p'} \left(\sum_{Q\in\cS} \bigg(\bE_Q^\lambda|g|\bigg)^{p'} \lambda(E_Q)\right)^{1/p'}\\
&\leq& [\lambda_{A_p}^{1/p'}\Lambda^{p/p'} \left(\sum_{Q\in\cD} \int_{E_Q} \bigg(M_\lambda^\cD g\bigg)^{p'}\,d\lambda\right)^{1/p'}\\
&\leq& [\lambda_{A_p}^{1/p'}\Lambda^{p/p'} \|M_\lambda^\cD g\|_{L^{p'}(\lambda)}\\
&\leq& [\lambda_{A_p}^{1/p'}\Lambda^{p/p'} p \|g\|_{L^{p'}(\lambda)}.
\end{eqnarray*}
\end{minipage}

Putting these estimates together:

\begin{eqnarray*}
|(\cA_\cS^\nu (f\mu'), g\lambda)| &\leq&  [\mu]_{A_p}^{1/p}[\lambda]_{A_p}^{1/p} 
	[\mu]_{A_p}^{\frac{p'-1}{p}}\Lambda^{p'/p} p' \|f\|_{L^p(\mu')}
	[\lambda_{A_p}^{1/p'}\Lambda^{p/p'} p \|g\|_{L^{p'}(\lambda)}\\
&=& [\mu]_{A_p}^{p'/p} [\lambda]_{A_p} \Lambda^{p'/p + p/p'} pp' \|f\|_{L^p(\mu')} \|g\|_{L^{p'}(\lambda)}\\
&=& [\mu']_{A_{p'}}[\lambda]_{A_p} \Lambda^{p+p'-2}pp' \|f\|_{L^p(\mu')}\|g\|_{L^{p'}(\lambda)},
\end{eqnarray*}
which proves the theorem.
\end{proof}


\section{Paraproducts and Bloom BMO}
\label{S:ParaDom}

We show the following pointwise domination result, inspired by ideas in \cite{Lacey} on pointwise domination of the martingale transform.

\begin{theorem}
\label{T:ParaDom}
There is a dimensional constant $C(n)$ such that: for every $\Lambda>1$, weight $w$ on $\bR^n$, $b\in BMO^\cD(w)$, fixed dyadic cube $Q_0\in\cD$ and $f \in L^1(Q_0)$, there is a $\Lambda$-Carleson sparse collection $\cS(Q_0) \subset \cD(Q_0)$ (depending on $b, w, f$) such that:
	$$
	\forall x\in Q_0: \:\: |\Pi_{b, Q_0}f(x)| \leq C(n) \bigg(\frac{\Lambda}{\Lambda-1}\bigg)^2 \|b\|_{BMO^\cD(w)} 
	\cA^w_{\cS(Q_0)}|f|(x).
	$$
The same holds for the other paraproducts $\Pi_b^*$ and $\Gamma_b$.
\end{theorem}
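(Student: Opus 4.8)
The plan is to build $\cS(Q_0)$ by a Calder\'on--Zygmund stopping-time recursion, adapted simultaneously to $f$, to the $L^1(dx)$-oscillation of $b$, and to the restricted paraproduct itself, in the spirit of the pointwise domination of the martingale transform in \cite{Lacey}. First I would reduce to the case where $b$ has a finite Haar expansion, so that $\Pi_{b,Q_0}f$ is a bounded, compactly supported function and all sums below are genuinely finite; the general case follows by truncating the Haar expansion of $b$ (which costs at most a dimensional factor in $\|b\|_{BMO^\cD(w)}$) and a routine limiting argument, legitimate because the Haar system admits an ordering making the relevant series converge in $L^1(dx)$. The single identity that drives everything is the tautology $\int_Q|b-\La b\Ra_Q|\,dx \le \La w\Ra_Q\|b\|_{BMO^\cD(w)}|Q|$, which is just the definition of $BMO^\cD(w)$ rewritten; it is what allows every measure estimate below to be made with respect to Lebesgue measure, with constants independent of any $A_p$-characteristic of $w$.

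Now construct $\cS(Q_0)$ recursively: put $Q_0\in\cS(Q_0)$, and given $Q\in\cS(Q_0)$ declare the $\cS$-children of $Q$ to be the maximal elements of the union of three families of maximal dyadic cubes $Q'\subsetneq Q$: (a) those with $\La|f|\Ra_{Q'}>C_1\La|f|\Ra_Q$; (b) those with $\La\,|b-\La b\Ra_Q|\,\Ra_{Q'}>C_2\La w\Ra_Q\|b\|_{BMO^\cD(w)}$; and (c) the Calder\'on--Zygmund cubes of $\{x\in Q:(P_Qf)_*(x)>C_3\La|f|\Ra_Q\La w\Ra_Q\|b\|_{BMO^\cD(w)}\}$, where $P_Qf:=\sum_R\La f\Ra_R\Delta_R b$ is the ``$f$-truncated'' restricted paraproduct, the sum over $R\in\cD(Q)$ not strictly contained in any family-(a) cube, and $(P_Qf)_*$ is its maximal truncation over $\cD(Q)$. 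Families (a) and (b) have total measure $\le|Q|/C_1$ and $\le|Q|/C_2$ by disjointness together with, respectively, the maximal function estimate and the displayed $BMO^\cD(w)$ inequality. For (c): a standard Calder\'on--Zygmund fact gives $|\La f\Ra_R|\le 2^nC_1\La|f|\Ra_Q$ for every $R$ entering $P_Qf$, so $P_Qf$ equals $2^nC_1\La|f|\Ra_Q$ times a restricted martingale transform of $(b-\La b\Ra_Q)\unit_Q$ with symbol bounded by $1$; the classical weak-$(1,1)$ bound for maximal truncations of martingale transforms then yields $|\{(P_Qf)_*>t\}|\le C(n)\frac{\La|f|\Ra_Q}{t}\int_Q|b-\La b\Ra_Q|\,dx\le C(n)\frac{\La|f|\Ra_Q\La w\Ra_Q\|b\|_{BMO^\cD(w)}}{t}|Q|$, so family (c) has total measure $\le C(n)|Q|/C_3$. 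Choosing $C_1=C_2=4\Lambda/(\Lambda-1)$ and $C_3=C(n)\bigl(\Lambda/(\Lambda-1)\bigr)^2$ makes the total $\cS$-child measure at most $(1-1/\Lambda)|Q|$, so $\cS(Q_0)$ is $(1/\Lambda)$-sparse, hence $\Lambda$-Carleson.

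For the pointwise bound, fix $x\in Q_0$ and let $Q_0=S_0(x)\supsetneq S_1(x)\supsetneq\cdots$ be the cubes of $\cS(Q_0)$ containing $x$ (an essentially finite chain, as $b$ has finite Haar expansion). Writing $\Pi_{b,Q_0}f(x)=\sum_{k\ge0}\sum_{\{R:\,S_{k+1}(x)\subsetneq R\subseteq S_k(x),\,x\in R\}}\La f\Ra_R\Delta_R b(x)$, one checks that no cube $R$ in the $k$-th block is strictly below a family-(a) cube of $S_k(x)$ (otherwise that family cube would sit inside the $\cS$-child of $S_k(x)$ containing $x$, namely $S_{k+1}(x)$, forcing $R\subsetneq S_{k+1}(x)$); hence the $k$-th block equals a truncation of $P_{S_k(x)}f$ evaluated at $x$, terminated at the level of the dyadic parent $\widehat{S_{k+1}(x)}$. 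Since the value of that truncation is constant on $\widehat{S_{k+1}(x)}$ and $\widehat{S_{k+1}(x)}$ is not strictly contained in any family-(c) cube of $S_k(x)$ (same argument), the value cannot exceed $C_3\La|f|\Ra_{S_k(x)}\La w\Ra_{S_k(x)}\|b\|_{BMO^\cD(w)}$ --- were it larger, $\widehat{S_{k+1}(x)}$ would lie inside a family-(c) cube. Summing over $k$ gives $|\Pi_{b,Q_0}f(x)|\le C_3\|b\|_{BMO^\cD(w)}\sum_{S\in\cS(Q_0):\,x\in S}\La w\Ra_S\La|f|\Ra_S=C_3\|b\|_{BMO^\cD(w)}\cA^w_{\cS(Q_0)}|f|(x)$, which is the claimed inequality with $C(n)\bigl(\Lambda/(\Lambda-1)\bigr)^2=C_3$.

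The two remaining paraproducts are handled in the same way: $\Pi^*_{b,Q_0}f(x)=\sum_{R\in\cD(Q_0),\,x\in R}\Delta_R b(x)\,\Delta_R f(x)$ and $\Gamma_{b,Q_0}$ have their Haar data dictated by that of $b$, and after the same (a)- and (b)-stopping one has $|\Delta_R b(x)|\le C(n)\La\,|b-\La b\Ra_Q|\,\Ra_R\le C(n)C_2\La w\Ra_Q\|b\|_{BMO^\cD(w)}$ and $|\Delta_R f(x)|\le C(n)\La|f|\Ra_R\le C(n)C_1\La|f|\Ra_Q$ off the dominated cubes, so the relevant truncated operator is again $C(n)\La w\Ra_Q\|b\|_{BMO^\cD(w)}\La|f|\Ra_Q$ times a weak-$(1,1)$ operator, and the argument runs verbatim. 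I expect the genuine obstacle to be precisely this local weak-$(1,1)$ estimate behind family (c): one needs the constant to be $\lesssim_n\La w\Ra_Q\|b\|_{BMO^\cD(w)}\La|f|\Ra_Q$ with \emph{no} dependence on $[w]_{A_p}$, which forces one to pass from the weighted $BMO^\cD(w)$ condition to the unweighted oscillation $\int_Q|b-\La b\Ra_Q|\,dx$ and to exploit that the $f$-stopping (a) is exactly what makes the surviving Haar coefficients of $f$ bounded, so that martingale-transform weak-$(1,1)$ theory applies despite the Haar system failing to be unconditional in $L^1(dx)$.
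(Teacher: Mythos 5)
Your treatment of $\Pi_b$ itself is essentially correct, but it is a genuinely different proof from the paper's. The paper first replaces $b$ by an honest \emph{unweighted} BMO function $a$ (the modified Calder\'on--Zygmund decomposition of \cite{BMOdecomp}, stopping on cubes where $\La w\Ra_R$ is large) and then runs all weak-$(1,1)$ estimates in the $f$-variable: the exceptional set is where $M^\cD(f\unit_{Q_0})$ or the maximal truncation $\overset{\vartriangleright}{\Pi_a}(f\unit_{Q_0})$ is large, the latter being weak $(1,1)$ with constant $\|a\|_{BMO^\cD}$ (via the $L^2$ bound for $\Pi_a$, i.e.\ John--Nirenberg/Carleson embedding, plus the localization lemma of Pereyra). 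You reverse the roles: you stop on $f$ so that the surviving averages $\La f\Ra_R$ are bounded, freeze them as coefficients, and apply the weak-$(1,1)$ bound for (maximal truncations of) martingale transforms to the $L^1$ function $(b-\La b\Ra_Q)\unit_Q$, whose $L^1$ norm is exactly what $\|b\|_{BMO^\cD(w)}$ controls. This gives a proof of the $\Pi_b$ case that needs neither John--Nirenberg nor the BMO decomposition, and your stopping/recursion bookkeeping (families (a), (b), (c), the measure counts, the maximality argument, the $(\Lambda/(\Lambda-1))^2$ constant) is sound, modulo one small repair: the truncated sum frozen at the level of $\widehat{S_{k+1}(x)}$ is constant on $S_{k+1}(x)$, not on $\widehat{S_{k+1}(x)}$, since the single term $R=\widehat{S_{k+1}(x)}$ is constant only on the children of $R$; split that term off and bound it by $|\La f\Ra_R|\,|(b,h_R)|/\sqrt{|R|}\lesssim C_1C_2\La|f|\Ra_{S_k}\La w\Ra_{S_k}\|b\|_{BMO^\cD(w)}$, using that $\widehat{S_{k+1}(x)}$ lies in no (a)- or (b)-cube; this only changes the dimensional constant.

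There are, however, two genuine gaps. The serious one is the final claim that the argument ``runs verbatim'' for $\Pi^*_b$ and $\Gamma_b$, which is part of the statement. Your mechanism for family (c) works because, once the $f$-data are frozen and bounded, what remains is a martingale transform, and bounded coefficients suffice for weak $(1,1)$. For $\Pi^*_b$ the analogous localized object is $g\mapsto\sum_R\sigma_R\,(g,h_R)\,\unit_R/\sqrt{|R|}$, with $\sigma_R=(b,h_R)/\sqrt{|R|}$ frozen (or, reversing roles, the same shape acting on $b-\La b\Ra_Q$ with $\sigma_R=(f,h_R)/\sqrt{|R|}$), and mere boundedness of $\sigma$ gives neither an $L^2$ bound nor weak $(1,1)$: with $\sigma\equiv 1$ and $g=\sum\sqrt{|R|}\,h_R$ summed over the top $N$ generations below a unit cube, one has $\|g\|_2=\sqrt N$, $\|g\|_1\le\sqrt N$, while the output equals $N$ on the whole cube. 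The point is that $\Pi^*$ and $\Gamma$ require the Carleson packing condition on $|(b,h_R)|^2$, not just coefficient bounds; this is exactly what the paper's reduction to the unweighted BMO function $a$ (hence John--Nirenberg) supplies, and what your (b)-stopping does not. So the $\Pi^*_b,\Gamma_b$ half of the theorem is not proved by your argument as written. Secondly, the opening reduction to $b$ with finite Haar expansion ``at the cost of a dimensional factor in $\|b\|_{BMO^\cD(w)}$'' is unjustified for an arbitrary weight $w$: truncating the Haar expansion is not known to be bounded on $BMO^\cD(w)$ (no John--Nirenberg is available there, and Haar projections are not $L^1$-bounded). That reduction is also unnecessary --- your stopping argument can be run directly, interpreting the pointwise bound through partial sums, as the paper implicitly does --- but as stated it is a gap.
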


Assuming this, return to the Bloom situation for a moment and say $b\in BMO^\cD(\nu)$ has finite Haar expansion. Then there are at most $2^n$ disjoint dyadic cubes $\{Q_k\}_{1\leq k\leq 2^n} \subset \cD$ such that
$b=\sum_K \sum_{Q\subset Q_k}(b, h_Q)h_Q$, and then $\Pi_bf = \sum_k \Pi_{b, Q_k}f$. So, assuming Theorem \ref{T:ParaDom}, there are $\Lambda$-Carleson sparse collections $\cS(Q_k)\subset \cD(Q_k)$ such that
	\begin{eqnarray*}
	|\Pi_bf(x)| &\leq& \sum_k |\Pi_{b, Q_k}f(x)|\\
	&\leq& C(n)\left(\frac{\Lambda}{\Lambda-1}\right)^2 \|b\|_{BMO^\cD(\nu)} \sum_k \cA^\nu_{\cS(Q_k)}|f|(x)\\
	&=& C(n)\left(\frac{\Lambda}{\Lambda-1}\right)^2 \|b\|_{BMO^\cD(\nu)} \cA^\nu_{\cS}|f|(x),
	\end{eqnarray*}
where $\cS$ is a sparse collection with Carleson constant $\Lambda$ and no infinitely increasing chains:
	$$
	\cS=\cup_{k}\cS(Q_k) \in \Upsilon^\cD(\bR^n) \text{ with } \Lambda_{(\cS)}=\Lambda.
	$$
So
	$$
	\|\Pi_b :L^p(\mu) \rightarrow L^p(\lambda)\| \leq C(n) \|b\|_{BMO^\cD(\nu)}  \sup_{\substack{\cS\in\Upsilon^\cD \\ \Lambda_{(\cS)}=\Lambda}}
		\left(\frac{\Lambda}{\Lambda-1}\right)^2 \|\cA^\nu_{\cS} : L^p(\mu)\rightarrow L^p(\lambda)\|
	$$
holds for all $b\in BMO^\cD(\nu)$ with finite Haar expansion -- and thus for all $b$.

\begin{corollary}
Given Bloom weights $\mu,\lambda\in A_p^\cD$, $\nu=\mu^{1/p}\lambda^{-1/p}$, for all $b\in BMO^\cD(\nu)$:
	$$
	\|\Pi_b :L^p(\mu) \rightarrow L^p(\lambda)\| \leq C(n) \|b\|_{BMO^\cD(\nu)}  \sup_{\substack{\cS\in\Upsilon^\cD \\ \Lambda_{(\cS)}=\Lambda}}
		\left(\frac{\Lambda}{\Lambda-1}\right)^2 \|\cA^\nu_{\cS} : L^p(\mu)\rightarrow L^p(\lambda)\|.
	$$
The same holds for the other paraproducts $\Pi^*_b$ and $\Gamma_b$.
\end{corollary}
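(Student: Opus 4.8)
The plan is to deduce the Corollary from Theorem~\ref{T:ParaDom} via the reduction already indicated in the discussion above; essentially all the content sits in Theorem~\ref{T:ParaDom}. First I would dispatch $b\in BMO^\cD(\nu)$ with finite Haar expansion: as noted before the statement, then $\Pi_b f=\sum_k\Pi_{b,Q_k}f$ for finitely many (at most $2^n$) pairwise disjoint dyadic cubes $Q_k$, and since $\mu\in A_p^\cD$ forces $f\in L^1(Q_k)$, Theorem~\ref{T:ParaDom} applied on each $Q_k$ with $w=\nu$ yields $\Lambda$-Carleson sparse families $\cS(Q_k)\subseteq\cD(Q_k)$ with $|\Pi_{b,Q_k}f(x)|\leq C(n)(\Lambda/(\Lambda-1))^{2}\|b\|_{BMO^\cD(\nu)}\,\cA^\nu_{\cS(Q_k)}|f|(x)$ on $Q_k$. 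Since the $Q_k$ are disjoint, $\cS:=\bigcup_k\cS(Q_k)$ is again sparse and $\Lambda$-Carleson, and since every element of $\cS$ lies inside some $Q_k$, $\cS$ has maximal elements, hence no infinitely increasing chain, i.e.\ $\cS\in\Upsilon^\cD(\bR^n)$ with $\Lambda_{(\cS)}=\Lambda$. Summing the pointwise bounds over $k$ gives $|\Pi_b f(x)|\leq C(n)(\Lambda/(\Lambda-1))^{2}\|b\|_{BMO^\cD(\nu)}\,\cA^\nu_{\cS}|f|(x)$ a.e.; taking $L^p(\lambda)$-norms and using that $\cA^\nu_\cS$ is positive, so $\|\cA^\nu_\cS|f|\|_{L^p(\lambda)}\leq\|\cA^\nu_\cS:L^p(\mu)\to L^p(\lambda)\|\,\|f\|_{L^p(\mu)}$, and then taking the supremum over admissible $\cS$, yields the stated inequality for finite-Haar $b$.

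For general $b\in BMO^\cD(\nu)$ I would pass to the limit through finite Haar truncations $b_N$: these obey a uniform $BMO^\cD(\nu)$ bound (a conditional-expectation-type contraction) and $\Pi_{b_N}f\to\Pi_b f$ for $f$ in a dense subclass of $L^p(\mu)$, so Fatou upgrades the uniform estimate for $\|\Pi_{b_N}f\|_{L^p(\lambda)}$ to the same estimate for $\Pi_b f$; as in the paper, some care with the convergence of the series defining the paraproduct is handled by a suitable ordering of the Haar system. The cases $\Pi^*_b$ and $\Gamma_b$ are identical, starting from the corresponding assertions of Theorem~\ref{T:ParaDom}.

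The main obstacle is Theorem~\ref{T:ParaDom} itself, which I would prove by a Lerner--Nazarov-type recursive stopping-time construction of $\cS(Q_0)$: starting from $Q_0$, the $\cS$-children of a selected cube $Q$ are the maximal $P\subsetneq Q$ at which one of $\La|f|\Ra_P$, $\La w\Ra_P$, or the maximal paraproduct truncation $\sup_{x\in P}\bigl|\sum_{P\subsetneq R\subseteq Q}(b,h_R)\La f\Ra_R\,h_R(x)\bigr|$ first exceeds a constant of size $\sim\Lambda/(\Lambda-1)$ times, respectively, $\La|f|\Ra_Q$, $\La w\Ra_Q$, and $\|b\|_{BMO^\cD(w)}\La w\Ra_Q\La|f|\Ra_Q$; one then telescopes $\Pi_{b,Q_0}f$ across the resulting stopping regions, bounding each block by the corresponding $\cA^w_{\cS(Q_0)}$-term, and the thresholds simultaneously produce the $\Lambda$-Carleson property and, after telescoping, the factor $(\Lambda/(\Lambda-1))^2$. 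The delicate point — and the reason the $\La w\Ra$-stopping is needed — is that controlling the measure of the stopping cubes coming from the paraproduct condition requires a weak $(1,1)$ bound for the maximal truncation of the dyadic paraproduct with constant $\|b\|_{BMO^\cD(w)}\La w\Ra_Q$ rather than $\|b\|_{BMO^\cD}$: this fails globally, but becomes available once the $\La w\Ra$-stopping has confined the relevant Haar frequencies to a region where $\La w\Ra\lesssim\La w\Ra_Q$ and the localized symbol genuinely lies in $BMO^\cD$ with norm $\lesssim\|b\|_{BMO^\cD(w)}\La w\Ra_Q$. Working first with $b$ of finite Haar expansion keeps the sums in the stopping conditions genuinely finite, so no convergence issues enter that part.
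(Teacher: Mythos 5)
Your deduction of the Corollary from Theorem~\ref{T:ParaDom} matches the paper's: decompose finite-Haar $b$ over at most $2^n$ disjoint roots $Q_k$, apply the local domination on each $Q_k$, merge the resulting sparse families into a single $\cS\in\Upsilon^\cD(\bR^n)$ with the same Carleson constant, take $L^p$-norms, and close via density using a uniformly $BMO^\cD(\nu)$-bounded truncation. Your sketch of Theorem~\ref{T:ParaDom} is also consistent with the paper's argument (the weak $(1,1)$ estimate for $\overset{\vartriangleright}{\Pi_a}$, where $a$ is the unweighted-BMO function produced by stopping on $\La w\Ra$, is exactly the localization mechanism you point to), though it is not needed to deduce the Corollary itself.
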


In light of the bound for $\cA_\cS^\nu$ in Theorem \ref{T:Bspbd}, pick some value for $\Lambda$, say $\Lambda=2$, and we have:

\begin{theorem}
\label{T:ParaBd}
Given Bloom weights $\mu,\lambda\in A_p^\cD$, $\nu=\mu^{1/p}\lambda^{-1/p}$, for all $b\in BMO^\cD(\nu)$:
	$$
	\|\Pi_b :L^p(\mu) \rightarrow L^p(\lambda)\| \leq C(n,p) \|b\|_{BMO^\cD(\nu)} [\mu]_{A_p}^{\frac{1}{p-1}}[\lambda]_{A_p}.
	$$
The same holds for the other paraproducts $\Pi^*_b$ and $\Gamma_b$.
\end{theorem}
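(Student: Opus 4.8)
The plan is to derive Theorem \ref{T:ParaBd} as an immediate consequence of two results already in hand: the pointwise sparse domination of the paraproducts recorded in the Corollary following Theorem \ref{T:ParaDom}, and the quantitative two-weight bound for the Bloom sparse operator $\cA_\cS^\nu$ from Theorem \ref{T:Bspbd}.

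First I would invoke the Corollary, which states that for every $\Lambda>1$ and every $b\in BMO^\cD(\nu)$,
\[
\|\Pi_b : L^p(\mu)\to L^p(\lambda)\| \le C(n)\,\|b\|_{BMO^\cD(\nu)}\,\Big(\tfrac{\Lambda}{\Lambda-1}\Big)^2 \sup_{\substack{\cS\in\Upsilon^\cD \\ \Lambda_{(\cS)}=\Lambda}} \|\cA_\cS^\nu : L^p(\mu)\to L^p(\lambda)\|,
\]
and likewise for $\Pi_b^*$ and $\Gamma_b$. Next, by Theorem \ref{T:Bspbd}, every sparse $\cS$ with Carleson constant $\Lambda$ satisfies $\|\cA_\cS^\nu : L^p(\mu)\to L^p(\lambda)\| \le \Lambda^{p+p'-2}(pp')\,[\mu]_{A_p}^{1/(p-1)}[\lambda]_{A_p}$, so the supremum above is bounded by the same quantity. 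Finally I would freeze $\Lambda=2$ (any value $>1$ works; one could optimize the product $\Lambda^{p+p'-2}(\Lambda/(\Lambda-1))^2$ in $\Lambda$, but since the constant may depend on $p$ this gains nothing essential), which turns $(\Lambda/(\Lambda-1))^2$ into $4$ and $\Lambda^{p+p'-2}$ into $2^{p+p'-2}$. Collecting the factors yields
\[
\|\Pi_b : L^p(\mu)\to L^p(\lambda)\| \le 4\cdot 2^{p+p'-2}(pp')\,C(n)\,\|b\|_{BMO^\cD(\nu)}\,[\mu]_{A_p}^{1/(p-1)}[\lambda]_{A_p},
\]
so one may take $C(n,p):=4\cdot 2^{p+p'-2}(pp')\,C(n)$. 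The identical argument applies verbatim to $\Pi_b^*$ and $\Gamma_b$, since the Corollary covers all three paraproducts with the same constant.

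There is no genuine obstacle remaining at this stage: all the work is already done, in Theorem \ref{T:ParaDom} (the local pointwise domination together with the verification that the resulting collection lies in $\Upsilon^\cD$, which is exactly what permits the passage from finitely-supported Haar expansions to general $b$ via unconditionality of the Haar basis in $L^p(\mu)$ and $L^p(\lambda)$) and in Theorem \ref{T:Bspbd} (the bound on $\cA_\cS^\nu$ obtained from the $u$-independent $L^q(u)\to L^q(u)$ boundedness of the weighted maximal operator $M_u^\cD$). The only points requiring a moment's care are cosmetic — that with $\Lambda$ fixed the exponent $\Lambda^{p+p'-2}$ is finite for each $1<p<\infty$ and gets absorbed into $C(n,p)$, and that the choice $\Lambda=2$ is made once and for all before taking the supremum over $\cS\in\Upsilon^\cD$ with $\Lambda_{(\cS)}=2$.
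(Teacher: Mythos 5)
Your proposal is correct and follows exactly the route the paper takes: the corollary after Theorem \ref{T:ParaDom} reduces the paraproduct bound to bounding $\cA_\cS^\nu$, Theorem \ref{T:Bspbd} supplies the bound $\Lambda^{p+p'-2}(pp')[\mu]_{A_p}^{1/(p-1)}[\lambda]_{A_p}$, and fixing $\Lambda=2$ absorbs the $\Lambda$-dependent factors into $C(n,p)$. The paper's surrounding remark also notes the $\Pi_b^*$ case can be obtained directly by duality, but your observation that the corollary already covers all three paraproducts is equally valid.
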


\begin{remark}
The result actually follows immediately for $\Pi_b^*$, since
	$$
	\|\Pi_b :L^p(\mu) \rightarrow L^p(\lambda)\| = \|\Pi_b^* :L^{p'}(\lambda') \rightarrow L^{p'}(\mu')\|
	$$
and
	$$
	\nu'=(\lambda')^{1/p'}(\mu')^{-1/p'}=(\lambda^{-p'/p})^{1/p'}(\mu^{-p'/p})^{-1/p'}=\nu.
	$$
\end{remark}

\begin{remark}
As discussed in the introduction, we do not know if this bound is sharp -- but we can show that one particular instance of this inequality is sharp -- namely when $\mu=w$ and $\lambda = w^{-1}$ for some $A_2^\cD$ weight $w$, in which case the ``intermediary'' Bloom weight is also $\nu=w$:
\begin{equation}
\label{E:para2}
\|\Pi_b : L^2(w)\rightarrow L^2(w^{-1})\|\lesssim \|b\|_{BMO(w)^\cD}[w]_{A_2}^2
\end{equation}
\end{remark}


\subsection{Proof that the quadratic bound $[w]_{A_2}^2$ in \eqref{E:para2} is sharp (via the one-weight linear $A_2$ bound for the dyadic square function).}
\label{Ss:sharp}
The starting point is a simple observation: Given a weight $w$ on $\bR^n$, the \textit{weight itself belongs to $BMO(w)$}, with 
	$$\|w\|_{BMO(w)} \leq 2.$$
To see this, if $Q$ is a cube:
	$$
	\frac{1}{w(Q)} \int_Q|w(x)-\La w\Ra_Q|\,dx \leq \frac{1}{w(Q)} (w(Q)+w(Q))=2.
	$$
So we may look at the paraproducts with symbol $w$: in $\bR$ these are
	\begin{eqnarray*}
	\Pi_w f &=& \sum_{I\in\cD}(w, h_I) \La f\Ra)_I h_I\\
	\Pi^*_w f &=& \sum_{I\in\cD} (w, h_I) (f, h_I) \frac{\unit_I}{|I|}.
	\end{eqnarray*}
If $w\in A_2^\cD$, these are bounded
	$$
	\|\Pi_w : L^2(w)\rightarrow L^2(w^{-1})\| = \|\Pi_w^* : L^2(w)\rightarrow L^2(w^{-1})\| \lesssim \|w\|_{BMO^\cD(w)}[w]_{A_2}^2 = 2[w]^2_{A_2}.
	$$
	
Recall the decomposition
	$$
	fw = \Pi_w f + \Pi_w^* f + \Pi_f w
	$$
and note that the map $f \mapsto fw$ is an isometry $L^2(w)\rightarrow L^2(w^{-1})$. So
	$$
	\Pi_f w = \sum_{I\in\cD} (f, h_I)\La w\Ra_I h_I
	$$
is bounded $L^2(w)\rightarrow L^2(w^{-1})$:
	$$
	\|\Pi_f w\|_{L^2(w^{-1})} \leq \left(1 + 2 \|\Pi_w :L^2(w)\rightarrow L^2(w^{-1})\| \right)\|f\|_{L^2(w)}.
	$$
Now look at the $L^2(w)$-norm of the dyadic square function $S_\cD f := (\sum_{I}(f, h_I)^2 \frac{\unit_I}{|I|})^{1/2}$:
	$$
	\|S_\cD f\|^2_{L^2(w)} = \sum_{I\in\cD} (f, h_I)^2 \La w\Ra_I
	= \left(f, \sum_{I\in\cD} (f, h_I)\La w\Ra_I h_I\right)
	= (f, \Pi_f w) \leq \|\Pi_f w\|_{L^2(w^{-1})}\|f\|_{L^2(w)},
	$$
so
	$$
	\|S_\cD f\|^2_{L^2(w)} \leq \left(1+ 2\|\Pi_w :L^2(w)\rightarrow L^2(w^{-1})\|\right) \|f\|^2_{L^2(w)}.
	$$
Since
	\begin{equation}
	\label{E:parstar}
	\frac{1}{2}\leq\frac{\|\Pi_w :L^2(w)\rightarrow L^2(w^{-1})\|}{\|w\|_{BMO(w)}}
	\end{equation}
(we will show this in a moment) and
	$$
	\frac{1}{2} \leq \frac{1}{\|w\|_{BMO(w)}},
	$$
we have further that
	$$
	\|S_\cD f\|^2_{L^2(w)} \leq \|f\|^2_{L^2(w)} \left( 2\frac{\|\Pi_w :L^2(w)\rightarrow L^2(w^{-1})\|}{\|w\|_{BMO(w)}} + 4\frac{\|\Pi_w :L^2(w)\rightarrow L^2(w^{-1})\|}{\|w\|_{BMO(w)}}	\right),
	$$
which yields
	$$
	\frac{\|S_\cD f\|_{L^2(w^{-1})}}{\|f\|_{L^2(w)}} \leq \sqrt{6} \left(\frac{\|\Pi_w :L^2(w)\rightarrow L^2(w^{-1})\|}{\|w\|_{BMO(w)}}\right)^{1/2}
	\leq \sqrt{6} \sup_{b\in BMO^\cD(w)} \left(\frac{\|\Pi_b :L^2(w)\rightarrow L^2(w^{-1})\|}{\|b\|_{BMO(w)}}\right)^{1/2}.
	$$
Finally, the fact that 
	$$
	\sup_{b\in BMO^\cD(w)} \left(\frac{\|\Pi_b :L^2(w)\rightarrow L^2(w^{-1})\|}{\|b\|_{BMO(w)}}\right) \geq \frac{1}{6} \|S_\cD : L^2(w)\rightarrow L^2(w)\|^2 \simeq [w]_{A_2}
	$$
shows that any smaller bound in \eqref{E:para2} would imply a bound for $\|S_\cD : L^2(w)\rightarrow L^2(w)\|$ smaller than $[w]_{A_2}$, which is well-known to be false.

Going back to \eqref{E:parstar}, it is easy to show that
	$$
	\unit_Q(b-\La b\Ra_Q) = \unit_Q (\Pi_b \unit_Q - \Pi^*_b\unit_Q), \:\:\forall Q\in\cD.
	$$
Then
	\begin{eqnarray*}
	\frac{1}{w(Q)}\int_Q|b - \La b\Ra_Q|\,dx &=& \frac{1}{w(Q)}\int_Q |\Pi_b\unit_Q - \Pi_b^*\unit_Q|\,dx\\
		&\leq& \frac{1}{w(Q)}\left[\bigg(\int_Q |\Pi_b \unit_Q|^2\,dw^{-1}\bigg)^{1/2}w(Q)^{1/2}
			+ \bigg(\int_Q |\Pi_b^* \unit_Q|^2\,dw^{-1}\bigg)^{1/2}w(Q)^{1/2} \right]\\
	&\leq& \frac{1}{w(Q)^{1/2}} 2\|\Pi_b :L^2(w)\rightarrow L^2(w^{-1})\| \:\: \|\unit_Q\|_{L^2(w)},
	\end{eqnarray*}
which gives us
	$$
	\|b\|_{BMO^\cD(w)} \leq 2 \|\Pi_b :L^2(w)\rightarrow L^2(w^{-1})\|, \forall b\in BMO^\cD(w).
	$$

\vspace{0.1in}
\begin{center}
$\ast$
\end{center}
\vspace{0.2in}

Now we proceed with the proof of Theorem \ref{T:ParaDom}, focusing on $\Pi_b$, with the other paraproducts following similarly.


\subsection{Maximal Truncation of Paraproducts.} Let $b\in BMO_{\cD}(\bR^n)$. Define the maximal truncation of the paraproduct $\Pi_b$:
	$$
	\overset{\vartriangleright}{\Pi_b}f(x) := \sup_{P\in\cD} \left| \sum_{Q \supsetneq P} (b, h_Q)\La f\Ra_Q h_Q(x)\right|.
	$$
We will need the following result, which may be found in Lemma 2.10 of \cite{PereyraLectures}. 
\begin{prop}
\label{P:Pereyra}
Suppose $T:L^2(\bR^n) \rightarrow L^2(\bR^n)$ is a bounded linear or sublinear operator. If $T$ satisfies
	$$
	\supp(Th_Q) \subset Q, \forall Q\in\cD,
	$$
then $T$ is of weak $(1, 1)$ type, with
	$$
	|\{x: |Tf(x)|>\alpha\}| \leq C_n B \frac{1}{\alpha}\|f\|_1,
	$$
where $C_n$ is a dimensional constant and $B:=\|T\|_{L^2\rightarrow L^2}$.
\end{prop}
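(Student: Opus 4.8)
The plan is to run the classical Calder\'on--Zygmund argument at height $\alpha$; the point is that the hypothesis $\supp(Th_Q)\subset Q$ renders the ``bad part'' completely elementary. Fix $f\in L^1(\bR^n)\cap L^2(\bR^n)$ (the general $f\in L^1$ case then follows by density, once $T$ is extended, or -- as in our application, where $T=\overset{\vartriangleright}{\Pi_b}$ is already defined on $L^1$ -- by running the argument directly and only using that $g\in L^2$). Apply the dyadic Calder\'on--Zygmund decomposition of $f$ at level $\alpha$: this produces pairwise disjoint dyadic cubes $\{Q_j\}$ with $\alpha<\La|f|\Ra_{Q_j}\le 2^n\alpha$, with $|f|\le\alpha$ a.e.\ off $\Om:=\bigcup_j Q_j$, and with $|\Om|=\sum_j|Q_j|\le\alpha^{-1}\|f\|_1$. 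Write $f=g+h$, where $g:=f\unit_{\Om^c}+\sum_j\La f\Ra_{Q_j}\unit_{Q_j}$ is the good part and $h:=\sum_j h_j$, $h_j:=(f-\La f\Ra_{Q_j})\unit_{Q_j}$, is the bad part; each $h_j$ is supported on $Q_j$ and has mean zero there.

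For the good part, $|g|\le 2^n\alpha$ everywhere and $\|g\|_1\le\|f\|_1$, so $\|g\|_{L^2}^2\le\|g\|_\infty\|g\|_1\le 2^n\alpha\|f\|_1$. Using the $L^2$-boundedness of $T$ and Chebyshev's inequality,
\begin{equation*}
|\{x:|Tg(x)|>\alpha/2\}|\le\frac{4}{\alpha^2}\|Tg\|_{L^2}^2\le\frac{4B^2}{\alpha^2}\|g\|_{L^2}^2\le\frac{2^{n+2}B^2}{\alpha}\|f\|_1.
\end{equation*}

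For the bad part, the hypothesis enters. Since $h_j$ is supported on the dyadic cube $Q_j$ with $\La h_j\Ra_{Q_j}=0$, its Haar expansion involves only $h_Q$ with $Q\subseteq Q_j$; hence, by linearity and $\supp(Th_Q)\subset Q$, we get $\supp(Th_j)\subseteq Q_j$ (in the sublinear case one bounds $|Th_j|$ by the corresponding sum of $|(h_j,h_Q)|\,|Th_Q|$ over $Q\subseteq Q_j$, or simply checks the support property directly on such $h_j$, which is immediate for $\overset{\vartriangleright}{\Pi_b}$ because $\La h_j\Ra_Q=0$ for every $Q\supseteq Q_j$). Consequently $\sum_j|Th_j|$ -- and hence $|Th|$ in the sublinear sense -- is supported on $\Om$, so $\{x:|Th(x)|>\alpha/2\}\subseteq\Om$ and
\begin{equation*}
|\{x:|Th(x)|>\alpha/2\}|\le|\Om|\le\frac{1}{\alpha}\|f\|_1,
\end{equation*}
with no appeal to $B$ at all. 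Combining via $\{|Tf|>\alpha\}\subseteq\{|Tg|>\alpha/2\}\cup\{|Th|>\alpha/2\}$ (valid for $T$ linear or sublinear) gives $|\{|Tf|>\alpha\}|\le C_n(1+B^2)\alpha^{-1}\|f\|_1$, which is the asserted weak $(1,1)$ bound (it reads as $C_nB\,\alpha^{-1}\|f\|_1$ once one notes $B\gtrsim1$ in the situations where this is used, e.g.\ for $T=\overset{\vartriangleright}{\Pi_b}$).

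The main obstacle is the one mild subtlety: propagating the single-Haar-function support condition $\supp(Th_Q)\subset Q$ to the bad part for genuinely sublinear $T$ -- first to finite Haar sums, then, by countable subadditivity together with an $L^1$ (or $L^2$) approximation, to the infinite sum $h=\sum_j h_j$. For the operators we actually apply this to -- the maximal truncations of the paraproducts -- this is painless, since the support statement $\supp(Th_j)\subseteq Q_j$ can be read off directly; but in the abstract formulation it is the step deserving care. Everything else is the textbook Calder\'on--Zygmund bookkeeping.
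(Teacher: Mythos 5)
The paper does not prove this proposition itself; it is quoted from Lemma~2.10 of \cite{PereyraLectures}, so there is no internal proof to compare against. Your Calder\'on--Zygmund route is the natural and presumably the intended one, and the structural idea is right: the hypothesis $\supp(Th_Q)\subset Q$ propagates, via the Haar expansion of each mean-zero $h_j$, to $\supp(Th_j)\subset Q_j$, making the bad part entirely local, while the good part is Chebyshev plus the $L^2$ bound. Your flags on the sublinear case and on passing to the infinite sum $h=\sum_j h_j$ are fair and are handled by standard care (for sublinear $L^2$-bounded $T$, use that $\big|\,|Tf|-|Tg|\,\big|\le|T(f-g)|$, so $T$ is $L^2$-Lipschitz, to pass to the limit of the partial sums $\sum_{j\le N}h_j$).

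There is, however, a real gap in the constant. Decomposing at height $\alpha$, the good part yields $2^{n+2}B^2\alpha^{-1}\|f\|_1$ and the bad part $\alpha^{-1}\|f\|_1$, so you obtain $C_n(1+B^2)\alpha^{-1}\|f\|_1$; the proposition asserts $C_nB\alpha^{-1}\|f\|_1$, which is strictly stronger for every $B>0$ since $1+B^2>B$. Your closing remark that ``$B\gtrsim1$'' repairs this runs the wrong way: if $B\gtrsim1$ then $1+B^2\gtrsim B^2\ge B$, so you have proved a \emph{weaker} inequality, and if instead $B$ were small then $(1+B^2)/B\to\infty$. The standard fix costs nothing: perform the Calder\'on--Zygmund decomposition at height $\alpha/B$ rather than $\alpha$ (equivalently, replace $T$ by $T/B$, apply your argument with $B=1$, and rescale). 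Then $|\Omega|\le B\alpha^{-1}\|f\|_1$, $\|g\|_\infty\le2^n\alpha/B$, $\|g\|_{L^2}^2\le2^n(\alpha/B)\|f\|_1$, and both contributions come out bounded by $(2^{n+2}+1)B\alpha^{-1}\|f\|_1$. The linear dependence on $B$ is precisely what the paper needs: in Part~II of the proof of Theorem~\ref{T:ParaDom}, the weak $(1,1)$ estimate for $\overset{\vartriangleright}{\Pi_a}$ is applied at threshold $\alpha=C_0\|a\|_{BMO^\cD}\La|f|\Ra_{Q_0}$, and the factor $\|a\|_{BMO^\cD}$ in the bound must be first-order so that it cancels and yields $|F|\le(C_1+C_2)C_0^{-1}|Q_0|$ with $C_0$ depending only on $n$ and $\epsilon$. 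A quadratic $B^2$-dependence would leave behind a stray factor $\|a\|_{BMO^\cD}\lesssim\epsilon^{-1}\La w\Ra_{Q_0}\|b\|_{BMO^\cD(w)}$, which is not uniformly bounded.
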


Now we prove some properties of $\overset{\vartriangleright}{\Pi_b}$.
\begin{prop}
\label{P:MaxTrunc}
The maximal truncation defined above satisfies the following:
	\begin{enumerate}[i.]
	\item $\overset{\vartriangleright}{\Pi_b}$ dominates $\Pi_b$:
		$$
		|\Pi_b f(x)| \leq \overset{\vartriangleright}{\Pi_b}f(x), \forall x\in\bR^n.
		$$
		
	\item $\overset{\vartriangleright}{\Pi_b}$ is dominated by $M^\cD\Pi_b$:
		$$
		\overset{\vartriangleright}{\Pi_b}f(x) \leq M^\cD (\Pi_b f)(x), \forall x\in \bR^n.
		$$
		
	\item $\overset{\vartriangleright}{\Pi_b}$ is strong $(2, 2)$:
		$$
		\|\overset{\vartriangleright}{\Pi_b}f\|_{2\rightarrow 2} \lesssim \|b\|_{BMO^\cD} \|f\|_{L^2}.
		$$
		
	\item $\overset{\vartriangleright}{\Pi_b}$ is weak $(1, 1)$:
		$$
		| \{x\in\bR^n: \overset{\vartriangleright}{\Pi_b}f(x) > \alpha\} | \leq \frac{C_n}{\alpha} \|f\|_1.
		$$
	\end{enumerate}
\end{prop}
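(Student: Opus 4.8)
The plan is to treat the four parts in sequence: (i) and (ii) are direct manipulations of Haar expansions, and (iii), (iv) then follow by feeding (ii) into the classical $L^2$ theory of the paraproduct and into Proposition~\ref{P:Pereyra}. Two elementary facts drive (i) and (ii): the Haar coefficients of the paraproduct are $(\Pi_b f, h_Q) = (b, h_Q)\La f\Ra_Q$, and the averaging identity $\La g\Ra_P = \sum_{R\supsetneq P}(g,h_R)h_R(P)$ recorded in Section~1.

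For (i), fix $x\in\bR^n$ on the boundary of no dyadic cube and, for $k\in\bZ$, let $P_k\in\cD$ be the cube with $\ell(P_k)=2^{-k}$ containing $x$; since $h_Q(x)=0$ unless $x\in Q$, the partial sum $\sum_{Q\supsetneq P_k}(b,h_Q)\La f\Ra_Q h_Q(x) = \sum_{Q\ni x,\,\ell(Q)>2^{-k}}(b,h_Q)\La f\Ra_Q h_Q(x)$ converges to $\Pi_b f(x)$ as $k\to\infty$ (this convergence being trivial in the application of Theorem~\ref{T:ParaDom}, where $b$ has finite Haar expansion), whence $|\Pi_b f(x)|\leq\overset{\vartriangleright}{\Pi_b}f(x)$. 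For (ii), I would first note it suffices to take the supremum over $P\ni x$: if $x\notin P$, the only surviving terms in $\sum_{Q\supsetneq P}(b,h_Q)\La f\Ra_Q h_Q(x)$ have $x\in Q$, and $\{Q\in\cD:Q\supsetneq P,\ x\in Q\}$ coincides with $\{Q\in\cD:Q\supsetneq P'\}$ for $P'$ the child containing $x$ of the smallest dyadic ancestor of $P$ that contains $x$, so the sum for $P$ equals the sum for $P'\ni x$. Now fix $P\ni x$: each $Q\supsetneq P$ is constant on $P$ with value $h_Q(x)$, so by the averaging identity applied to $g=\Pi_b f$ we get $\sum_{Q\supsetneq P}(b,h_Q)\La f\Ra_Q h_Q(x)=\sum_{Q\supsetneq P}(\Pi_b f,h_Q)h_Q(P)=\La\Pi_b f\Ra_P$, so the absolute value is $\leq\La|\Pi_b f|\Ra_P\leq M^\cD(\Pi_b f)(x)$; taking the supremum over $P\ni x$ gives (ii).

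Part (iii) is then immediate from (ii): $M^\cD$ is bounded on $L^2(\bR^n)$ with constant at most $2$ (by \eqref{E:wMax} with $u=dx$), and $\|\Pi_b\|_{L^2\to L^2}\lesssim\|b\|_{BMO^\cD}$ is the classical dyadic paraproduct estimate, so $\|\overset{\vartriangleright}{\Pi_b}f\|_{L^2}\leq\|M^\cD(\Pi_b f)\|_{L^2}\lesssim\|b\|_{BMO^\cD}\|f\|_{L^2}$. For (iv), I would apply Proposition~\ref{P:Pereyra} to the sublinear operator $T=\overset{\vartriangleright}{\Pi_b}$, which is $L^2$-bounded by (iii) with $B\lesssim\|b\|_{BMO^\cD}$; it only remains to check $\supp(\overset{\vartriangleright}{\Pi_b}h_Q)\subset Q$. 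Since $h_Q$ is cancellative and is non-vanishing and constant on its strict dyadic subcubes, $\La h_Q\Ra_R\neq 0$ forces $R\subsetneq Q$, and then $h_R(x)\neq 0$ forces $x\in R\subset Q$; hence for $x\notin Q$ every term of $\sum_{R\supsetneq P}(b,h_R)\La h_Q\Ra_R h_R(x)$ vanishes, for every $P$, so $\overset{\vartriangleright}{\Pi_b}h_Q(x)=0$. Proposition~\ref{P:Pereyra} then delivers the weak $(1,1)$ bound (with the dimensional constant understood to absorb the factor $\|b\|_{BMO^\cD}$, which the estimate naturally carries).

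The only genuinely substantive step is the one inside (ii): the reduction of the supremum to cubes $P\ni x$, combined with the identification of the truncated paraproduct sum with the local average $\La\Pi_b f\Ra_P$ — a ``stopped martingale'' phenomenon peculiar to the paraproduct structure, which is what makes $\overset{\vartriangleright}{\Pi_b}$ dominated by $M^\cD\Pi_b$ rather than by something larger. Everything else is bookkeeping plus invoking the $L^2$ boundedness of $\Pi_b$ and $M^\cD$ and Proposition~\ref{P:Pereyra}, so I do not anticipate any real difficulty; the one thing to keep track of is that the weak-$(1,1)$ constant depends on $\|b\|_{BMO^\cD}$.
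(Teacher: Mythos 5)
Your proposal is correct and follows essentially the same route as the paper: partial sums over cubes containing $x$ for (i), identification of each truncated sum with a local average $\La \Pi_b f\Ra_{P}$ over a cube containing $x$ (the paper does this via the ancestor $P^{(k+1)}$ and its child $P_0$, you via reduction of the supremum to $P\ni x$) for (ii), the $L^2$ bounds for $M^\cD$ and $\Pi_b$ for (iii), and the support condition $\supp(\overset{\vartriangleright}{\Pi_b}h_Q)\subset Q$ plus Proposition~\ref{P:Pereyra} for (iv). Your remark that the weak-$(1,1)$ constant carries the factor $\|b\|_{BMO^\cD}$ matches how the estimate is actually invoked in the proof of Theorem~\ref{T:ParaDom}.
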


\begin{proof}
\textit{i.} Let $x\in\bR^n$. Then 
	$$
	\Pi_bf(x) = \sum_{Q\in\cD} (b, h_Q) \La f\Ra_Q h_Q(x) = \sum_{k\in\mathbb{Z}} (b, h_{Q_k}) \La f\Ra_{Q_k} h_{Q_k}(x),
	$$
where for every $k\in\mathbb{Z}$, $Q_k$ is the unique cube in $\cD$ with side length $2^k$ that contains $x$. Fix $m\in\mathbb{Z}$:
	$$
	\left| \sum_{k>m} (b, h_{Q_k}) \La f\Ra_{Q_k} h_{Q_k}(x) \right| =
	\left| \sum_{Q \supsetneq Q_m} (b, h_Q) \La f\Ra_Q h_Q(x) \right| \leq \overset{\vartriangleright}{\Pi_b} f(x).
	$$
Taking $m\rightarrow -\infty$ finishes the proof.

\vspace{0.2in}
\textit{ii.} Let $P\in\cD$ and define
	$
	F_P(x) := \sum_{Q\supsetneq P} (b, h_Q) \La f\Ra_Q h_Q(x).
	$
	
\begin{minipage}{0.6\textwidth}
If $x\in P$, then
	$
	|F_P(x)| 
	= |\La \Pi_bf\Ra_P| \unit_P(x),
	$
so
	$$
	|F_P(x)| \leq \La|\Pi_bf|\Ra_P \unit_P(x) \leq M^\cD \Pi_b f(x).
	$$
	
	If $x \notin P$, then there is a unique $k\geq 0$ such that 
	$$x\in P^{(k+1)}\setminus P^{(k)}.$$
 So, there is a unique 
$$P_0\in \left(P^{(k+1)}\right)_{(1)}, \:\:\: P_0 \neq P^{(k)},$$ 
such that $x\in P_0$. Then:
\end{minipage}
\hfill
\noindent\begin{minipage}{0.4\textwidth}
\includegraphics[width=\linewidth]{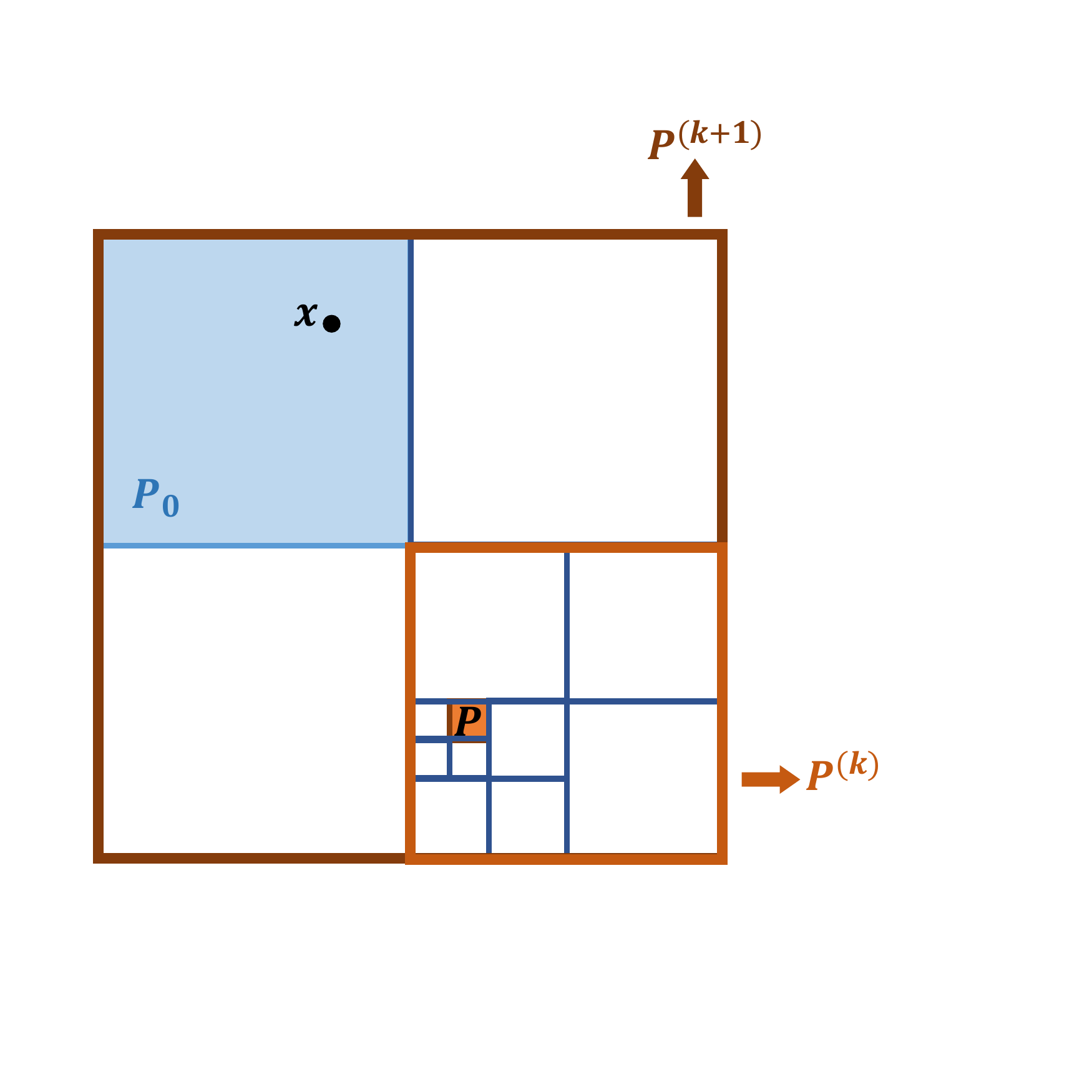}
\end{minipage}

\begin{eqnarray*}
F_P(x) &=& (b, h_{P^{(k+1)}}) \La f\Ra_{P^{(k+1)}} h_{P^{(k+1)}}(x) + \sum_{Q \supsetneq P^{(k+1)}} (b, h_Q) \La f\Ra_Q 
	\underbrace{h_Q(P^{(k+1)})}_{= h_Q(P_0)}\\
 &=& [\sum_{Q\supsetneq P_0} (b, h_Q) \La f\Ra_Q h_Q(P_0)]\unit_{P_0}(x) \\
 &=& \La \Pi_b f\Ra_{P_0} \unit_{P_0}(x),
\end{eqnarray*}
so once again $|F_P(x)|\leq M^\cD \Pi_bf(x)$. This therefore holds for all $x\in\bR^n$ and all $P\in\cD$, which proves \textit{ii}.

\vspace{0.2in}
\textit{iii.} This follows immediately from \textit{ii} and the well-known bound for $\Pi_b$ in the unweighted case:
	 $$
	 \|\overset{\vartriangleright}{\Pi_b}\|_{L^2} \leq \|M^\cD \Pi_b f\|_{L^2} \lesssim \|\Pi_b f\|_{L^2} \lesssim \|b\|_{BMO^\cD} \|f\|_{L^2}.
	 $$

\vspace{0.2in}
\textit{iv.} Once we verify $\supp(\overset{\vartriangleright}{\Pi_b}) \subset Q$ for all $Q\in\cD$, we use \textit{iii} and Proposition \ref{P:Pereyra} to conclude \textit{iv}.
	\begin{eqnarray*}
	\overset{\vartriangleright}{\Pi_b} h_Q(x) &=& \sup_{P\in\cD} |\sum_{R\supsetneq P} (b, h_R) \La h_Q\Ra_Q h_R(x)| \\
		&=& \sup_{P\subsetneq Q} |\sum_{R\supsetneq P, R \subsetneq Q} (b, h_R) h_Q(R) h_R(x)|,
	\end{eqnarray*}
which is clearly $0$ if $x \notin Q$.
\end{proof}

\subsection{Proof of Theorem \ref{T:ParaDom}.}

\begin{proof}{ }
\noindent \textit{I. The BMO decomposition.} We make use of the following modification to the Calder\'{o}n-Zygmund decomposition used in \cite{BMOdecomp} to essentially reduce a weighted BMO function to a regular BMO function. Given a weight $w$ on $\bR^n$, a function $b\in BMO^\cD(w)$, a fixed dyadic cube $Q_0\in\cD$, and $\epsilon\in (0,1)$, let the collection:
	$$
	\cE := \{\text{maximal subcubes } R\subset Q_0 \text{ s.t. } \La w\Ra_R > \frac{2}{\epsilon} \La w\Ra_{Q_0}\}
	$$
and put
	$$
	E:= \bigcup_{R\in\cE} R.
	$$
This is the collection from the usual CZ-decomposition of $w$, restricted to $Q_0$, so we have
	$$
	\sum_{R\in\cE} |R| < \frac{\epsilon}{2}|Q_0|.
	$$
But instead of defining the usual ``good function'' for $w$, we let
	$$
	a := \unit_{Q_0}(x)b(x) - \sum_{R\in\cE}(b(x)-\La b\Ra_R)\unit_R(x) = \sum_{Q\subset Q_0, Q\not\subset E}(b, h_Q) h_Q.
	$$
As shown in \cite{BMOdecomp}, this function is in unweighted BMO, with:
	$$
	a \in BMO^\cD; \:\: \|a\|_{BMO^\cD} \leq \frac{4}{\epsilon} \La w\Ra_{Q_0} \|b\|_{BMO^\cD(w)}.
	$$
Moreover,
	$$
	\forall Q\in\cD(Q_0),\: Q\not\subset E: \La a\Ra_Q = \La b\Ra_Q \text{ and } (a, h_Q)=(b, h_Q),
	$$
so whenever dealing with a cube $Q\not\subset E$, we can replace any average or Haar coefficient of $b$ -- the function in weighted BMO -- with the average or Haar coefficient of $a$ -- the function in \textit{unweighted} BMO. This has many advantages, since any usage of inequalities involving $a$ will not add any extra $A_p$ characteristics. For instance, we can use the well-known bound for Haar coefficients of BMO functions (resulting from applying the John-Nirenberg theorem to replace the $L^1$ norm in the BMO definition with the $L^2$ norm):
	$$
	|(a, h_Q)|\lesssim \sqrt{|Q|} \|a\|_{BMO^\cD}.
	$$
It also allows us to use the results on $\overset{\vartriangleright}{\Pi_a}f$ from the previous section.
	
\vspace{0.2in}

\noindent \textit{II. Use the properties of the maximal truncation of \underline{unweighted} BMO paraproducts.} We claim that there exists a constant $C_0$, depending on the dimension $n$ and on $\epsilon$, such that the set:
	$$
	F:= \{x\in Q_0: \overset{\vartriangleright}{\Pi_a}f(x) > C_0 \|a\|_{BMO^\cD}\La |f|\Ra_{Q_0}\} 
	\cup \{x\in Q_0: M_{Q_0}^\cD f(x) > C_0 \La |f|\Ra_{Q_0}\}
	$$
satisfies 
	$$
	|F| < \frac{\epsilon}{2}|Q_0|,
	$$
where $M_{Q_0}^\cD$ denotes the dyadic maximal function restricted to $Q_0$, i.e. $M_{Q_0}^\cD f(x) = \sup_{Q\subset Q_0} \La |f|\Ra_{Q_0}\unit_{Q}(x)$.
Let then the collection
	$$
	\cF := \{\text{maximal subcubes of } Q_0 \text{ contained in F}\}.
	$$

First use the well-known weak $(1,1)$ inequality for the dyadic maximal function:
	$$
	|\{x\in \bR^n: M^\cD \varphi (x) > \alpha\}| \leq \frac{C_1(n)}{\alpha}\|\varphi\|_1,
	$$
applied to $\varphi = f\unit_{Q_0}$. For all $x\in Q_0$, $M^\cD (f\unit_{Q_0})(x) = M^\cD_{Q_0}f(x)$, so
	$$
	|\{x\in Q_0: M_{Q_0}^\cD f(x) > C_0 \La |f|\Ra_{Q_0}\}| \leq \frac{C_1}{C_0}|Q_0|.
	$$

Since $a\in BMO^\cD$ we can apply the weak $(1,1)$ inequality for $\overset{\vartriangleright}{\Pi_a}$ according to Proposition \ref{P:MaxTrunc}:
	$$
	|\{x\in\bR^n: \overset{\vartriangleright}{\Pi_a} \varphi(x) > \alpha\}| \leq \frac{C_2(n)}{\alpha} \|a\|_{BMO^\cD} \|\varphi\|_1,
	$$
and let again $\varphi = f \unit_{Q_0}$. By the definition of $a$, in this case, $\Pi_a f$ sums only over $Q\subset Q_0$, so regardless of $x$ we have 
$\Pi_a \varphi = \Pi_a(f\unit_{Q_0})$. Same holds for $\overset{\vartriangleright}{\Pi_a}$:
	$$
	\overset{\vartriangleright}{\Pi_a} \varphi (x) = \sup_{P\in\cD} |\sum_{Q\supsetneq P} (a, h_Q) \La \varphi\Ra_Q h_Q(x)|
	= \sup_{P\subsetneq Q_0} |\sum_{Q\supsetneq P, Q\subset Q_0, Q\not\subset E} (b, h_Q) \La\varphi\Ra_Q h_Q(x)|,
	$$
so
	\begin{eqnarray*}
	|\{x\in Q_0: \overset{\vartriangleright}{\Pi_a} f(x) > C_0 \|a\|_{BMO^\cD} \La|f|\Ra_{Q_0}\}| &=&
		|\{x\in Q_0: \overset{\vartriangleright}{\Pi_a}(f\unit_{Q_0})(x) > C_0 \|a\|_{BMO^\cD} \La|f|\Ra_{Q_0}\}|\\
		&\leq& |\{x\in \bR^n: \overset{\vartriangleright}{\Pi_a}(f\unit_{Q_0})(x) > C_0 \|a\|_{BMO^\cD}\La|f|\Ra_{Q_0}\}|\\
		&\leq& \frac{C_2}{C_0\|a\|_{BMO^\cD}\La|f|\Ra_{Q_0}} \|a\|_{BMO^\cD} \|f\unit_{Q_0}\|_1 = \frac{C_2}{C_0}|Q_0|.
	\end{eqnarray*}
Then, as we wished,
	$$
	|F| \leq \frac{C_1+C_2}{C_0}|Q_0| < \frac{\epsilon}{2}|Q_0|,
	$$
if we choose $C_0$ large enough:
	$$
	C_0 = \frac{C(n)}{\epsilon}.
	$$

Join the collections $\cE$ and $\cF$ into:
	$$
	\cG:= \{\text{maximal subcubes of } Q_0 \text{ contained in } E\cup F\},
	$$
which then satisfies
	\begin{equation}
	\label{E:Pib-ch}
	\bigg|\bigcup_{R\in\cG}R\bigg| < \epsilon |Q_0|
	\end{equation}
	
We show that:
	\begin{equation}
	\label{E:Pib-rec}
	\unit_{Q_0}(x) \big|\Pi_{b, Q_0}f(x)\big| \leq 2C_0 \|a\|_{BMO^\cD} \La|f|\Ra_{Q_0} \unit_{Q_0}(x) +
	\sum_{R\in\cG} \unit_R(x) \big|\Pi_{b, R}f(x)\big|.
	\end{equation}
Since $\|a\|_{BMO^\cD} \leq \frac{4}{\epsilon} \La w\Ra_{Q_0}\|b\|_{BMO^\cD(w)}$, this yields
	$$
	\unit_{Q_0}(x) \big|\Pi_{b, Q_0}f(x)\big| \lesssim \frac{C_0}{\epsilon} \La w\Ra_{Q_0} \|b\|_{BMO^\cD(w)}\La |f|\Ra_{Q_0} \unit_{Q_0}(x) + 
	\sum_{R\in\cG} \unit_R(x) \big|\Pi_{b, R}f(x)\big|.
	$$
Once we have this, we recurse on the terms of the second sum, and repeat the argument: for each $R\in\cG$ construct a disjoint collection $\{R'\}\subset R$ satisfying
$|\cup R'| < \epsilon |R|$ and 
	$$
	\unit_R|\Pi_{b, R}f(x)| \lesssim \frac{C_0}{\epsilon} \La w\Ra_{R} \|b\|_{BMO^\cD(w)} \La|f|\Ra_R\unit_R(x) + \sum_{R'} |\Pi_{b, R'}f(x)|.
	$$
So we construct the collection $\cS(Q_0)$ recursively, starting with $Q_0$ as its first element, its $\cS$-children are $\cG$ and so on. We have
	$$
	|\Pi_{b, Q_0}f(x)| \lesssim \frac{C_0}{\epsilon} \|b\|_{BMO^\cD(w)} \underbrace{\sum_{Q\in\cS(Q_0)}\La w\Ra_Q \La|f|\Ra_Q \unit_Q(x)}_{ = \cA_{\cS(Q_0)}^w|f|(x)}.
	$$
Recall that $C_0 \sim \frac{C(n)}{\epsilon}$:
	$$
	|\Pi_{b, Q_0}f(x)| \lesssim \frac{C(n)}{\epsilon^2} \|b\|_{BMO^\cD(w)} \cA_{\cS(Q_0)}^w |f|(x).
	$$
The collection $\cS(Q_0)$ satisfies the $\cS$-children definition of sparse collections:
	$$
	\sum_{P\in \text{ch}_\cS(Q)}|P| < \epsilon |Q|, \forall Q\in \cS(Q_0),
	$$
so $\cS(Q_0)$ is $\frac{1}{1-\epsilon}$-Carleson. So we choose $\epsilon = \frac{\Lambda-1}{\Lambda}$
and we have the desired sparse collection with Carleson constant $\Lambda$ such that
	$$
	|\Pi_{b, Q_0}f(x)| \leq C(n) \left(\frac{\Lambda}{\Lambda-1}\right)^2 \|b\|_{BMO^\cD(w)}\cA_{\cS(Q_0)}^w|f|(x).
	$$

\vspace{0.2in}
\noindent \textit{III. Proof of \eqref{E:Pib-rec}}. We start by noting that
	\begin{eqnarray*}
	\Pi_{b, Q_0}f(x) &=& \sum_{P\subset Q_0} (b, h_P) \La f\Ra_P h_P(x)\\
	&=& \underbrace{\sum_{P\subset Q_0, P\not\subset E}(b, h_P) \La f\Ra_P h_P(x)}_{\Pi_a f(x)} +
		\sum_{R\in\cE} \underbrace{\sum_{P\subset R} (b, h_P) \La f\Ra_P h_P(x)}_{\Pi_{b,R} f(x)},
	\end{eqnarray*}
so we may decompose $\Pi_{b, Q_0}f$ as
	$$
	\unit_{Q_0}(x) \Pi_{b, Q_0}f(x) = \Pi_a f(x) + \sum_{R\in \cE} \Pi_{b, R}f(x).
	$$
Now, we have to account for the relationship to the collection $\cF$ and its union $F$.

\vspace{0.1in}
\noindent \textit{\underline{Case 1}:} $x\not\in F$.

In this case, $\overset{\vartriangleright}{\Pi_a} f(x) \leq C_0 \|a\|_{BMO^\cD} \La|f|\Ra_{Q_0}$, and since $\overset{\vartriangleright}{\Pi_a}$ dominates $\Pi_a$:
	$$
	|\Pi_a f(x)| \leq \overset{\vartriangleright}{\Pi_a}f(x) \leq C_0 \|a\|_{BMO^\cD} \La|f|\Ra_{Q_0},
	$$
so we have
	$$
	|\Pi_{b, Q_0}f(x)| \leq C_0 \|a\|_{BMO^\cD} \La|f|\Ra_{Q_0} + |\sum_{R\in\cE} \Pi_{b, R}f(x)|.
	$$
	\begin{itemize}
	\item \textit{\underline{Case 1a}:} If $x\in E$, there is a \textit{unique} $R_0 \in \cE$ such that $x\in R_0$. But then $R_0\in\cG$: say $R_0\not\in\cG$; since $R_0\subset E$, it must have been absorbed by a larger $R\supsetneq R_0$, $R\in\cF$. Then $R_0\subset R\subset F$, which contradicts $x\not\in F$. So then
	$$
	\sum_{R\in \cE} \Pi_{b, R}f(x) = \Pi_{b, R_0}f(x),
	$$
and
	$$
	|\Pi_{b, Q_0}f(x)| \leq C_0 \|a\|_{BMO^\cD} \La|f|\Ra_{Q_0} + |\Pi_{b, R_0}f(x)|, \:\:\: R_0\in \cG,
	$$
which gives \eqref{E:Pib-rec} in this case.
	
	\item \textit{\underline{Case 1b}:} If $x\not\in E$, then the second part of the sum is $0$ and we are done, having simply $|\Pi_{b, Q_0}f(x)| \leq C_0 \|a\|_{BMO^\cD} \La|f|\Ra_{Q_0}$.
	\end{itemize}

\begin{center}
\includegraphics[scale=0.5]{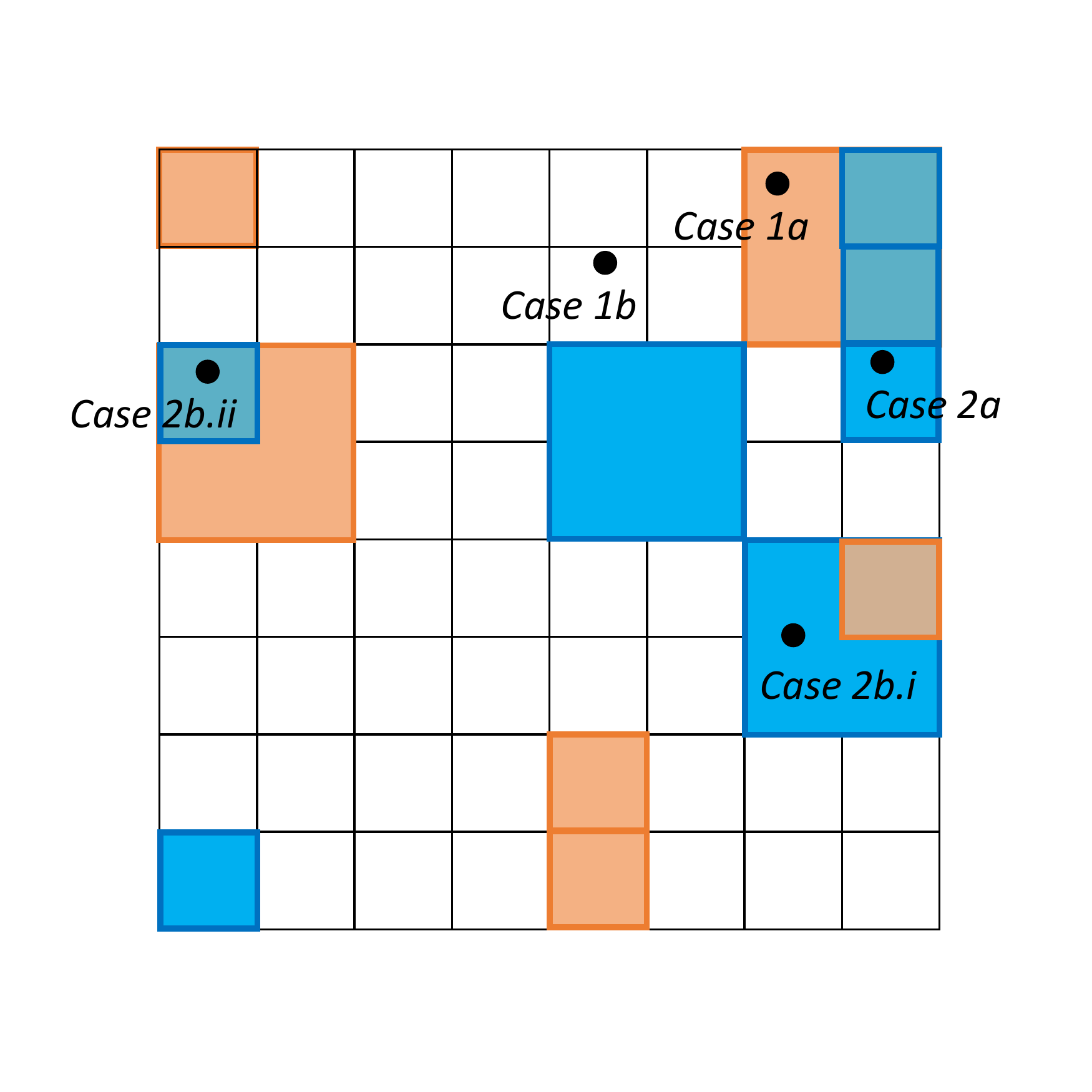}
\end{center}

\noindent \textit{\underline{Case 2}:} $x\in F$.

Then there is a \textit{unique} $P\in\cF$ such that $x\in P$. Look first at the term $\Pi_af(x) = \sum_{Q\subset Q_0} (a, h_Q)\La f\Ra_Q h_Q(x)$. Since $x\in P$, this can be expressed as
	$$
	\Pi_a f(x) = \sum_{Q\supset\hat{P}}(a, h_Q)\La f\Ra_Q h_Q(x) + \sum_{Q\subset P} (a, h_Q)\La f\Ra_Q h_Q(x),
	$$
where $\hat{P}$ denotes the dyadic parent of $P$. The first term we split into two:
	$$
	|\sum_{Q\supset\hat{P}}(a, h_Q)\La f\Ra_Q h_Q(x)| \leq
	\underbrace{|\sum_{Q\supsetneq \hat{P}} (a, h_Q) \La f\Ra_Q h_Q(x)|}_{=:A(x)} +
	\underbrace{|(a, h_{\hat{P}}) \La f\Ra_{\hat{P}} h_{\hat{P}}(x)|}_{=:B}.
	$$
\begin{itemize}
\item The term $A$ is constant on $\hat{P}$, so if $A(x) > C_0 \|a\|_{BMO^\cD}\La|f|\Ra_{Q_0}$, then $A(y) > C_0 \|a\|_{BMO^\cD}\La|f|\Ra_{Q_0}$ for all $y\in\hat{P}$. This would force $\overset{\vartriangleright}{\Pi_a} f(y) > C_0 \|a\|_{BMO^\cD}\La|f|\Ra_{Q_0}$ for all $y\in\hat{P}$, so $\hat{P} \subset F$ -- but this contradicts maximality of $P$ in $\cF$. Therefore 
	$$
	A \leq C_0 \|a\|_{BMO^\cD}\La|f|\Ra_{Q_0}.
	$$
\item Let us now look at the term $B$. If $\hat{P}\subset E$, then $B=0$. Otherwise, since $|(a, h_Q)| \lesssim \sqrt{|Q|}\|a\|_{BMO^\cD}$,
	$$
	B \leq \sqrt{|\hat{P}|} \|a\|_{BMO^\cD} \La|f|\Ra_{\hat{P}} \frac{1}{\sqrt{|\hat{P}|}} = 
	\|a\|_{BMO^\cD} \La|f|\Ra_{\hat{P}}, 
	$$
but $\La|f|\Ra_{\hat{P}} \leq C_0 \La|f|\Ra_{Q_0}$ -- otherwise, $M_{Q_0}^\cD f(y) > C_0 \La|f|\Ra_{Q_0}$ for all $y\in\hat{P}$, which would force $\hat{P}\subset F$, again contradicting maximality of $P$ in $\cF$.
\end{itemize}

So
	$$
	|\sum_{Q\supset\hat{P}}(a, h_Q)\La f\Ra_Q h_Q(x)| \lesssim C_0 \|a\|_{BMO^\cD} \La|f|\Ra_{Q_0},
	$$
giving us that
	$$
	|\Pi_{b, Q_0}f(x)| \lesssim C_0 \|a\|_{BMO^\cD} \La|f|\Ra_{Q_0}  + |C|,
	$$
where the term $C$ is defined as
	$$
	C := \sum_{Q\subset P, Q\not\subset E} (b, h_Q) \La f\Ra_Q h_Q(x) + \sum_{R\in\cE} \Pi_{b, R} f(x).
	$$
We claim that
	$$
	C = \Pi_{b, R_0}f(x),
	$$
where $R_0$ is the unique element of $\cG$ such that $x\in R_0$:
	\begin{itemize}
	\item \textit{\underline{Case 2a}:} If $P\cap E = \emptyset$, then $R_0 = P$ and $C = \Pi_{b,P}f(x) = \Pi_{b, R_0}f(x)$ (the first term is $\Pi_{b, P}$ and the second term is $0$).
	\item \textit{\underline{Case 2b}:} If $P\cap E \neq \emptyset$:
		\begin{itemize}
		\item \textit{\underline{Case 2b.i}:} If $P$ contains some elements of $\cE$, then again $R_0 = P$ and we can ``fill in the blanks'' in the first term with the $\Pi_{b,R}$'s from the second term:
			$$
			C = \bigg[\sum_{Q\subset P, Q\not\subset E} (b, h_Q) \La f\Ra_Q h_Q(x) + \sum_{R\in\cE, R\subset P} \Pi_{b, R} f(x)\bigg] + 
			\underbrace{\sum_{R\in\cE, R\not\subset P} \Pi_{b, R}f(x)}_{=0}
			= \Pi_{b,P}f(x) = \Pi_{b, R_0}f(x).
			$$
		\item \textit{\underline{Case 2b.ii}:} If $P\subset S_0$ for some $S_0\in\cE$, then $R_0 = S_0$ and the first term in $C$ is $0$ (because $P\subset E$), and the second term is 
		$\sum_{R\in\cE} \Pi_{b,R}f(x) = \Pi_{b, S_0}f(x) = \Pi_{b, R_0}f(x)$.
		\end{itemize}
	\end{itemize}
This concludes the proof.
\end{proof}

\begin{remark}
One can also use Theorem \ref{T:ParaDom} to obtain a full $\bR^n$ domination, losing the requirement for no infinitely increasing chains. Say $f$ is such that $\supp(f)\subset Q_0$ for some $Q_0\in\cD$ (or, for general compactly supported functions, $\supp(f)$ is contained in at most $2^n$ disjoint $Q_k\in\cD$). Then
	$$
	\Pi_b f(x) = \Pi_{b, Q_0}f(x) + \bigg(\sum_{Q\supsetneq Q_0} (b, h_Q)\frac{1}{|Q|}h_Q(x)\bigg)\int_{Q_0}f.
	$$
Note that, as an application of the modified CZ-decomposition used in Part I of the proof above, one can obtain
	$$
	|(b, h_Q)| \lesssim \sqrt{|Q|}\La w\Ra_Q \|b\|_{BMO^\cD(w)}, \:\:\forall Q\in\cD, b \in BMO^\cD(w).
	$$
To see this, let $Q\in\cD$ and apply the decomposition to $b$ over $Q$:
	$$
	\cE:=\{\text{maximal subcubes } R\subset Q_0 \text{ s.t. } \La w\Ra_R>2\La w\Ra_{Q}\}; \:\:\: E:=\bigcup_{R\in\cE}R;
	$$
	$$
	a:= \sum_{R\subset Q; R\not\subset E}(b, h_Q)h_Q \in BMO^\cD \text{ with } \|a\|_{BMO^\cD} \leq 4\La w\Ra_Q \|b\|_{BMO^\cD(w)}.
	$$
Since $Q$ itself is not selected for $\cE$, $Q\not\subset E$, so $(a, h_Q)=(b, h_Q)$. Finally, then:
	$$
	|(b, h_Q)| = |(a, h_Q)|\lesssim \sqrt{|Q|}\|a\|_{BMO^\cD}\leq \sqrt{|Q|}4\La w\Ra_Q \|b\|_{BMO^\cD(w)}.
	$$

Returning to $\Pi_b f$, suppose first that $x\not\in Q_0$. Then there is a unique $k\geq 1$ such that $x\in Q_0^{(k)}\setminus Q_0^{(k-1)}$, and
	$$
	\Pi_b f(x) = \bigg(\sum_{Q\supset Q_0^{(k)}} (b, h_Q)\frac{1}{|Q|}h_Q(x)\bigg) \int_{Q_0}f.
	$$
Then
	\begin{eqnarray*}
	|\Pi_b f(x)| &\leq& \sum_{Q\supset Q_0^{(k)}} |(b, h_Q)|\frac{1}{|Q|}\frac{1}{\sqrt{|Q|}} \big(\int_{Q_0}|f|\big)\\
	&\lesssim& \sum_{Q\supset Q_0^{(k)}} \La w\Ra_Q \|b\|_{BMO^\cD(w)} \frac{1}{|Q|}\int_{Q_0}|f|\\
	&=& \|b\|_{BMO^\cD(w)} \sum_{Q\supset Q_0^{(k)}} \La w\Ra_Q \La|f|\Ra_Q.
	\end{eqnarray*}
If, on the other hand, $x\in Q_0$,
	$$
	\Pi_b f(x) = \Pi_{b, Q_0}f(x) + \sum_{Q\supsetneq Q_0}(b, h_Q)\frac{1}{|Q|}h_Q(Q_0)\int_{Q_0}f,
	$$
so
	$$
	|\Pi_b f(x)|\lesssim |\Pi_{b, Q_0}f(x)| + \|b\|_{BMO^\cD(w)}\sum_{Q\supsetneq Q_0} \La w\Ra_Q \La|f|\Ra_Q.
	$$
By Theorem \ref{T:ParaDom}, there is a $\Lambda$-Carleson sparse collection $\cS(Q_0)$ such that
	$$
	|\Pi_{b, Q_0}f(x)| \leq C(n) \bigg(\frac{\Lambda}{\Lambda-1}\bigg)^2 \|b\|_{BMO^\cD(w)} 
	\cA^w_{\cS(Q_0)}|f|(x).
	$$
So form a sparse collection $\cS$ as follows:
	$$
	\cS := \cS(Q_0)\cup \bigcup_{k=1}^\infty Q_0^{(k)},
	$$
with $Q_0^{(k-1)}$ being the only $\cS$-child of $Q_0^{(k)}$ for all $k\geq 1$. It is easy to see that $\cS$ is $(\Lambda+1)$-Carleson. Moreover the associated sparse operator
	$$
	\cA_\cS^w f = \cA_{\cS(Q_0)}^w f + \sum_{Q\supsetneq Q_0}\La w\Ra_Q\La f\Ra_Q\unit_{Q}
	$$
appears exactly in the previous inequalities, which can be expressed as:
	$$
	x\not\in Q_0: |\Pi_b f(x)| \lesssim \|b\|_{BMO^\cD(w)}\cA_\cS^w|f|(x);
	$$
	$$
	x\in Q_0: |\Pi_b f(x)| \lesssim C(n) \bigg(\frac{\Lambda}{\Lambda-1}\bigg)^2 \|b\|_{BMO^\cD(w)} 
	\cA^w_{\cS}|f|(x).
	$$
So indeed
	$$
	|\Pi_b f(x)| \lesssim \|b\|_{BMO^\cD(w)} \cA_\cS^w |f|(x), \:\:\forall x\in \bR^n,
	$$
for all compactly supported $f$.
\end{remark}

\begin{remark}
If we let $f\equiv 1$ in $\Pi_{b, Q_0}f$, we have
	$$
	\Pi_{b, Q_0}1(x) = \sum_{Q\subset Q_0} (b, h_Q)h_Q(x) = (b(x) - \La b\Ra_{Q_0})\unit_{Q_0}(x).
	$$
So, applying Theorem \ref{T:ParaDom} to the function $f\equiv 1$ essentially gives us that local mean oscillations of functions in $BMO^\cD(w)$ can be dominated by one of the sparse BMO functions in Section \ref{Ss:SparseBMO}:
\end{remark}

\begin{corollary}
There is a dimensional constant $C(n)$ such that for all $\Lambda>1$, weights $w$ on $\bR^n$, $b\in BMO^\cD(w)$ and $Q_0\in\cD$, there is a $\Lambda$-Carleson sparse collection $\cS(Q_0)\subset\cD(Q_0)$ such that
	\begin{eqnarray*}
	|(b(x) - \La b\Ra_{Q_0})\unit_{Q_0}(x)| &\leq& C(n)\left(\frac{\Lambda}{\Lambda-1}\right)^2 \|b\|_{BMO^\cD(w)}\sum_{Q\in\cS(Q_0)}\La w\Ra_Q\unit_Q(x)\\
	&=& C(n)\left(\frac{\Lambda}{\Lambda-1}\right)^2 \|b\|_{BMO^\cD(w)} b^w_{\cS(Q_0)}(x).
	\end{eqnarray*}
\end{corollary}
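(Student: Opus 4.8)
The plan is to apply Theorem~\ref{T:ParaDom} directly to the constant function $f\equiv1$, which lies in $L^1(Q_0)$ since $Q_0$ has finite Lebesgue measure. The one computation I would record first is the bookkeeping identity already isolated in the remark preceding the statement: because $\La1\Ra_Q=1$ for every cube $Q$, because each $h_Q$ with $Q\subset Q_0$ is supported in $Q_0$, and because the partial Haar sum of $b$ over $\cD(Q_0)$ telescopes, one has
$$
\Pi_{b, Q_0}1(x) = \sum_{Q\subset Q_0}(b, h_Q)\La1\Ra_Q h_Q(x) = \sum_{Q\subset Q_0}(b, h_Q)h_Q(x) = \big(b(x)-\La b\Ra_{Q_0}\big)\unit_{Q_0}(x).
$$

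Next I would invoke Theorem~\ref{T:ParaDom} with this $f$ and the given $w$, $b$, $Q_0$, $\Lambda$: it produces a $\Lambda$-Carleson sparse collection $\cS(Q_0)\subset\cD(Q_0)$ with
$$
\big|\Pi_{b, Q_0}1(x)\big| \leq C(n)\Big(\tfrac{\Lambda}{\Lambda-1}\Big)^{2}\|b\|_{BMO^\cD(w)}\,\cA^w_{\cS(Q_0)}|1|(x),\qquad\forall x\in Q_0.
$$
Since $|1|\equiv1$ gives $\La|1|\Ra_Q=1$ for every $Q$, the sparse operator on the right collapses to the sparse BMO function:
$$
\cA^w_{\cS(Q_0)}|1|(x) = \sum_{Q\in\cS(Q_0)}\La w\Ra_Q\La1\Ra_Q\unit_Q(x) = \sum_{Q\in\cS(Q_0)}\La w\Ra_Q\unit_Q(x) = b^w_{\cS(Q_0)}(x).
$$
Substituting both displays into the middle one yields the asserted inequality on $Q_0$; outside $Q_0$ every term vanishes, so it holds on all of $\bR^n$.

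I do not expect any real obstacle: the analytic content is entirely carried by Theorem~\ref{T:ParaDom}, and what remains is only the two elementary observations above --- the telescoping identity $\Pi_{b,Q_0}1=(b-\La b\Ra_{Q_0})\unit_{Q_0}$ and the trivial evaluation $\La1\Ra_Q=1$ that turns $\cA^w_{\cS(Q_0)}|1|$ into $b^w_{\cS(Q_0)}$. One could equally run the argument with $f=\unit_{Q_0}$, for which $\Pi_{b,Q_0}\unit_{Q_0}=\Pi_{b,Q_0}1$ and $\La|\unit_{Q_0}|\Ra_Q=1$ for all $Q\subset Q_0$, reaching the same conclusion.
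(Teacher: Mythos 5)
Your proof is correct and follows exactly the paper's route: the remark preceding the corollary already records the identity $\Pi_{b,Q_0}1 = (b-\La b\Ra_{Q_0})\unit_{Q_0}$, and the corollary is obtained by feeding $f\equiv 1$ into Theorem~\ref{T:ParaDom} and observing that $\cA^w_{\cS(Q_0)}1 = b^w_{\cS(Q_0)}$. Your explicit check that $1\in L^1(Q_0)$ and the observation that everything vanishes off $Q_0$ are fine but unnecessary touches.
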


\appendix
\section{Proof of Theorem \ref{T:SparseBMO}}
\label{A:my-sparse}
Recall that we are given $\cS\in\Upsilon^\cD(\bR^n)$ and the associated function
	$$
	b_\cS := \sum_{Q\in\cS}\unit_Q,
	$$
and we wish to show that
	$$
	\|b_\cS\|_{BMO^\cD} \leq \Lambda,
	$$
where $\Lambda$ is the Carleson constant of $\cS$.

\begin {proof}
Let $Q_0\in\cD$ be fixed. We wish to estimate $\frac{1}{|Q_0|} \int_{Q_0}|b-\La b\Ra_{Q_0}|\,dx$, and recall that
	$$
	(b_\cS-\La b_\cS\Ra_{Q_0}) \unit_{Q_0} = \sum_{Q\in\cS, Q\subsetneq Q_0}\unit_Q - (\tau_\cS)_{Q_0}\unit_{Q_0}, \text{ where }
	(\tau_\cS)_P := \frac{1}{|P|}\sum_{Q\in\cS, Q\subsetneq P}|Q| \leq\Lambda \:\:\forall P\in\cD.
	$$
In fact, 
	$$ \text{If } P\in\cS, \text{ then } (\tau_\cS)_P \leq \Lambda-1.
	$$
	
With $Q_0\in\cD$ fixed, here we are only looking at $\cS(Q_0):=\{Q\in\cS: Q\subset Q_0\}$. We define the collections as sets:
	\begin{eqnarray*}
	\cS_1 &:=& \text{ch}_\cS(Q_0) \text{ (the }\cS\text{-children of }Q_0\text{) and } S_1:= \bigcup_{Q_1\in\cS_1}Q_1;\\
	\cS_2 &:=& \{Q_2\in\text{ch}_\cS(Q_1): Q_1\in\cS_1\} \text{ and } S_2:=\bigcup_{Q_2\in\cS_2}Q_2,
	\end{eqnarray*}
so $\cS_2$ are the ``$\cS$-grandchildren'' of $Q_0$, the second generation of $\cS$-cubes in $Q_0$. Generally,
	$$
	\cS_k := \{Q_k\in\text{ch}_\cS(Q_{k-1}) : Q_{k-1}\in\cS_{k-1}\} \text{ and } S_k:=\bigcup_{Q_k\in\cS_k}Q_k.
	$$
Note that:
	\begin{itemize}
	\item Each $S_k$ is a disjoint union of $Q_k\in\cS_k$, as each $\cS_k$ is a pairwise disjoint collection.
	\item The sets $S_k$ satisfy $Q_0\supset S_1\supset S_2\supset\ldots$
	\item Moreover
		$$
		\left|\bigcap_{k=1}^\infty S_k\right|=0,
		$$
	since $\bigcap_{k=1}^\infty$ is exactly the set of all $x$ contained in infinitely many elements of $\cS(Q_0)$. We can also see this directly, as the series $\sum_{k=1}^\infty |S_k| \leq \Lambda |Q_0|$ converges.
	\end{itemize}

For ease of notation, denote for now
	$$
	\theta:= (\tau_\cS)_{Q_0} = \frac{1}{|Q_0|} \sum_{Q\in\cS, Q\subsetneq Q_0}|Q| \leq\Lambda.
	$$
We have:
	\begin{eqnarray*}
	\frac{1}{|Q_0|}\int_{Q_0}|b_\cS-\La b_\cS\Ra_{Q_0}|\,dx &=& \frac{1}{|Q_0|} \int_{Q_0} \bigg|\sum_{Q\in\cS,Q\subsetneq Q_0}\unit_Q(x) - \theta\bigg|\,dx\\
	&=& \frac{1}{|Q_0|} \int_{Q_0\setminus S_1} |\theta|\,dx + \frac{1}{|Q_0|}\int_{S_1} \bigg|\sum_{Q\in\cS,Q\subsetneq Q_0}\unit_Q(x) - \theta\bigg|\,dx
	\end{eqnarray*}
Since $S_1$ is a disjoint union of $Q_1\in\cS_1$:
	\begin{eqnarray*}
	\frac{1}{|Q_0|} \int_{S_1} \bigg|\sum_{Q\in\cS,Q\subsetneq Q_0}\unit_Q(x) - \theta\bigg|\,dx &=&
		\frac{1}{|Q_0|}\sum_{Q_1\in\cS_1} \int_{Q_1}\bigg|\sum_{Q\in\cS,Q\subset Q_1}\unit_Q(x) - \theta\bigg|\,dx\\
	&=& \frac{1}{|Q_0|} \sum_{Q_1\in\cS_1} \int_{Q_1} \bigg|\sum_{Q\in\cS,Q\subsetneq Q_1}\unit_Q(x) +1 - \theta\bigg|\,dx\\
	&=& \frac{1}{|Q_0|} \bigg[ \sum_{Q_1\in\cS_1}\bigg( \int_{Q_1\setminus S_2}|1-\theta|\,dx +\sum_{\substack{Q_2\in\cS_2\\ Q_2\subsetneq Q_1}}\int_{Q_2}
		\big|\sum_{\substack{Q\in\cS\\ Q\subset Q_2}} \unit_{Q}(x) +1-\theta\big|\,dx \bigg) \bigg]\\
	&=& \frac{1}{|Q_0|}|1-\theta \underbrace{|\sum_{Q_1\in\cS_1}|Q_1\setminus S_2|}_{|S_1\setminus S_2|} + \frac{1}{|Q_0|}\sum_{Q_2\in\cS_2}\int_{Q_2}
		\big|\sum_{\substack{Q\in\cS\\ Q\subsetneq Q_2}} \unit_{Q}(x) +2-\theta\big|\,dx.
	\end{eqnarray*}
So
	$$
	\frac{1}{|Q_0|}\int_{Q_0}|b_\cS-\La b_\cS\Ra_{Q_0}|\,dx =\theta\frac{|Q_0\setminus S_1|}{|Q_0|} + |1-\theta|\frac{|S_1\setminus S_2|}{|Q_0|}
		+ \frac{1}{|Q_0|}\sum_{Q_2\in\cS_2}\int_{Q_2}
		\big|\sum_{\substack{Q\in\cS\\ Q\subsetneq Q_2}} \unit_{Q}(x) +2-\theta\big|\,dx.
	$$
We can apply the same reasoning to each $Q_2\in\cS_2$:
	$$
	\int_{Q_2} \big|\sum_{\substack{Q\in\cS\\ Q\subsetneq Q_2}} \unit_{Q}(x) +2-\theta\big|\,dx = \int_{Q_2\setminus S_3} |2-\theta|\,dx
		+\sum_{\substack{Q_3\in\cS_3\\ Q_3\subsetneq Q_2}} \int_{Q_3} \big|\sum_{\substack{Q\in\cS\\ Q\subsetneq Q_3}} \unit_{Q}(x) +3-\theta\big|\,dx,
	$$
and we can conclude inductively
	\begin{equation}
	\label{E:SpBMO1}
	\frac{1}{|Q_0|}\int_{Q_0}|b_\cS-\La b_\cS\Ra_{Q_0}|\,dx  = \theta\frac{|Q_0\setminus S_1|}{|Q_0|} + |1-\theta|\frac{|S_1\setminus S_2|}{|Q_0|}
	+ |2-\theta|\frac{|S_2\setminus S_3|}{|Q_0|}+\ldots
	\end{equation}

Suppose for a moment that $\theta\leq 1$. Then \eqref{E:SpBMO1} becomes
	\begin{eqnarray*}
	&& \theta\frac{|Q_0\setminus S_1|}{|Q_0|} + (1-\theta)\frac{|S_1\setminus S_2|}{|Q_0|}
	+ (2-\theta)\frac{|S_2\setminus S_3|}{|Q_0|}+\ldots\\
	&=& \frac{1}{|Q_0|} \bigg(\theta|Q_0\setminus S_1| +(1-\theta)|S_1|-(1-\theta)|S_2|+(2-\theta)|S_2|-(2-\theta)|S_3|+(3-\theta)|S_3|-\ldots \bigg)\\
	&=& \frac{1}{|Q_0|} \bigg(\theta|Q_0\setminus S_1| + (1-\theta)|S_1| +|S_2|+|S_3|+\ldots\bigg).
	\end{eqnarray*}

\begin{remark}
Thoroughly, we have above a sequence of partial sums
	\begin{eqnarray*}
	a_k &=& c|S_1\setminus S_2| + (c+1)|S_2\setminus S_3|+\ldots + (c+k-1)|S_k\setminus S_{k+1}|\\
	&=& c|S_1|-c|S_2|+(c+1)|S_2|-(c+1)|S_3|+\ldots +(c+k-1)|S_k|-(c+k-1)|S_{k+1}|\\
	&=& c|S_1|+|S_2|+|S_3|+\ldots+|S_k|-(c+k-1)|S_{k+1}|,\\
	\end{eqnarray*}
where $c=(1-\theta)\geq 0$. We know that:
	\begin{itemize}
	\item The series $\sum_{k=1}^\infty |S_k|$ converges, by the Carleson property;
	\item The ``remainder'' $(c+k-1)|S_{k+1}|\rightarrow 0$ as $k\rightarrow\infty$, because the series $\sum_{k=1}^\infty k|S_k|$ also converges:
		\begin{eqnarray*}
		\sum_{k=1}^\infty k|S_k| &=& |S_1|+2|S_2|+3|S_3|+\ldots\\
			&=& |S_1|+|S_2|+|S_3|+\ldots \leq \Lambda|S_1|\\
			&+&|S_2|+|S_3|+\ldots \leq\Lambda|S_2|\\
			&+& |S_3|+|S_4|+\ldots \leq \Lambda|S_3|\\
			&+&\vdots\\
			&\leq& \Lambda(|S_1|+|S_2|+\ldots) \leq \Lambda^2|S_1|.
		\end{eqnarray*}
	\end{itemize}
So 
	$$\lim_{k\rightarrow\infty} a_k = c|S_1|+\sum_{k=2}^\infty |S_k| - \underbrace{\lim_{k\rightarrow\infty}(c+k-1)|S_{k+1}|}_{=0}
	$$
and 
	$$
	\frac{1}{|Q_0|}\int_{Q_0}|b_\cS-\La b_\cS\Ra_{Q_0}|\,dx = \frac{1}{|Q_0|} \bigg(\theta|Q_0\setminus S_1| + (1-\theta)|S_1| +|S_2|+|S_3|+\ldots\bigg)
	$$
holds.
\end{remark}

Now,
	$$
	|S_2|+|S_3|+\ldots = \sum_{Q_1\in\cS_1}\underbrace{\bigg(\sum_{Q\in\cS, Q\subsetneq Q_1}|Q|\bigg)}_{\leq (\Lambda-1)|Q_1|\text{ because }Q_1\in\cS}
	\leq (\Lambda-1)\sum_{Q_1\in\cS_1}|Q_1|=(\Lambda-1)|S_1|.
	$$
So
	\begin{eqnarray*}
	\frac{1}{|Q_0|}\int_{Q_0}|b_\cS-\La b_\cS\Ra_{Q_0}|\,dx &\leq& \frac{1}{|Q_0|} \bigg(
		\theta|Q_0\setminus S_1| +(1-\theta)|S_1| + (\Lambda-1)|S_1|\bigg)\\
		&=& \frac{1}{|Q_0|}\bigg(\theta|Q_0\setminus S_1| +(\Lambda-\theta)|S_1|\bigg)\\
		&\leq& \frac{1}{|Q_0|}\bigg(\Lambda|Q_0\setminus S_1| +\Lambda|S_1|\bigg)\\ 
		&=& \frac{\Lambda}{|Q_0|}\big(|Q_0\setminus S_1|+|S_1|\big)\\
		&=& \Lambda.
	\end{eqnarray*}

Generally, if $n < \theta \leq (n+1)$ for some $n\in\mathbb{N}$: the right hand side of \eqref{E:SpBMO1} becomes
$$
\frac{1}{|Q_0|}\bigg[
\theta|Q_0\setminus S_1|+(\theta-1)|S_1\setminus S_2| +\ldots+(\theta-n)|S_n\setminus S_{n+1}|
+\underbrace{(n+1-\theta)|S_{n+1}\setminus S_{n+2}|+(n+2-\theta)|S_{n+2}\setminus S_{n+3}|+\ldots}_{(n+1-\theta)|S_{n+1}| +
\underbrace{|S_{n+2}|+|S_{n+3}|+\ldots}_{\leq(\Lambda-1)|S_{n+1}|}}
\bigg]
$$
\begin{eqnarray*}
&&\leq \frac{1}{|Q_0|}\bigg[\theta|Q_0\setminus S_1|+(\theta-1)|S_1\setminus S_2|+\ldots+(\theta-n)|S_n\setminus S_{n+1}| + (\Lambda+n-\theta)|S_{n+1}| \bigg]\\
&&\leq \frac{1}{|Q_0|}\bigg[\Lambda|Q_0\setminus S_1|+\Lambda|S_1\setminus S_2|+\ldots + \Lambda|S_n\setminus S_{n+1}|+\Lambda|S_{n+1}| \bigg]\\
&&\leq \frac{\Lambda}{|Q_0|}\bigg(|Q_0\setminus S_1| +|S_1\setminus S_2|+\ldots+|S_n\setminus S_{n+1}|+|S_{n+1}|\bigg)\\
&&= \Lambda.
\end{eqnarray*}
\end{proof}

\section{Proof of Theorem \ref{T:ParaComp}}
\label{A:ParaComp}

Say we have $a\in BMO^\cD(\bR^n)$, $b\in BMO^\cD(w)$ where $w$ is a weight on $\bR^n$, and a fixed $Q_0\in\cD$. We look at
	$$
	\Pi_a^*\Pi_{b, Q_0}f := \sum_{Q\subset Q_0} (a, h_Q)(b, h_Q) \La f\Ra_Q\frac{\unit_Q}{|Q|}
	$$
and the inner product
	$$
	(\Pi_a^*\Pi_{b, Q_0}f,g)=\sum_{Q\subset Q_0}(a,h_Q)(b,h_Q)\La f\Ra_Q\La g\Ra_Q.
	$$
Within $Q_0$ we form the local CZ-decompositions of $f$ and $g$, and the BMO decomposition of $b$:
	\begin{eqnarray*}
	\cE_1 &:=& \{\text{maximal subcubes } R\in\cD(Q_0) \text{ s.t. } \La|f|\Ra_R > \frac{3}{\epsilon}\La|f|\Ra_{Q_0}\}; \:\: E_1:=\cup_{R\in\cE_1}R;\\
	\cE_2 &:=& \{\text{maximal subcubes } R\in\cD(Q_0) \text{ s.t. } \La|g|\Ra_R > \frac{3}{\epsilon}\La|g|\Ra_{Q_0}\}; \:\: E_2:=\cup_{R\in\cE_2}R;\\
	\cE_3 &:=& \{\text{maximal subcubes } R\in\cD(Q_0) \text{ s.t. } \La w\Ra_R > \frac{3}{\epsilon}\La w\Ra_{Q_0}\}; \:\:\:\: E_3:=\cup_{R\in\cE_3}R.
	\end{eqnarray*}
Based on $\cE_3$ we define 
	$$
	\widetilde{b} := \unit_{Q_0}b-\sum_{R\in\cE_3}(b-\La b\Ra_R)\unit_R=\sum_{\substack{Q\subset Q_0\\ Q\not\subset E_3}}(b, h_Q)h_Q,
	$$
which satisfies $\widetilde{b}\in BMO^\cD(\bR^n)$ with
	$$
	\|\widetilde{b}\|_{BMO^\cD} \leq \frac{6}{\epsilon}\La w\Ra_{Q_0}\|b\|_{BMO^\cD(w)}.
	$$
Moreover, $(b, h_Q) = (\widetilde{b},h_Q)$ for all $Q\subset Q_0$, $Q\not\subset E_3$. Each collection $\cE_i$ satisfies 
	$$
	\sum_{R\in\cE_i}|R|\leq \frac{\epsilon}{3}|Q_0|.
	$$
Finally, let
	$$
	E :=E_1\cup E_2\cup E_3 \text{ and } \cE:=\{\text{maximal subcubes }R\in\cD(Q_0)\text{ s.t. }R\subset E\}.
	$$
Then
	$$
	\sum_{R\in\cE}|R|\leq\epsilon|Q_0|.
	$$

Now look at $(\Pi_a^*\Pi_{b, Q_0}f,g)$ and split the sum as
	\begin{equation}
	\label{E:App2}
	|(\Pi_a^*\Pi_{b, Q_0}f,g)| \leq \sum_{\substack{Q\subset Q_0\\ Q\not\subset E}}|(a, h_Q)|\:|(b, h_Q)|\:\La|f|\Ra_Q\La|g|\Ra_Q
	+ \sum_{R\in\cE}|(\Pi_a^*\Pi_{b, R}f,g)|.
	\end{equation}
For every $Q\subset Q_0$, $Q\not\subset E$, we have:
	$$
	\La|f|\Ra_Q\leq \frac{3}{\epsilon}\La|f|\Ra_{Q_0}, \:\: \La|g|\Ra_Q\leq \frac{3}{\epsilon}\La|g|\Ra_{Q_0}, \text{ and } (b, h_Q) = (\widetilde{b},h_Q),
	$$
so:
	\begin{eqnarray*}
	\sum_{\substack{Q\subset Q_0\\ Q\not\subset E}}|(a, h_Q)|\:|(b, h_Q)|\:\La|f|\Ra_Q\La|g|\Ra_Q 
	&\leq& \frac{9}{\epsilon^2}\La|f|\Ra_{Q_0}\La|g|\Ra_{Q_0}\sum_{Q\subset Q_0, Q\not\subset E}|(a, h_Q)|\:|(\widetilde{b}, h_Q)|\\
	&\leq& \frac{9}{\epsilon^2}\La|f|\Ra_{Q_0}\La|g|\Ra_{Q_0} 
		\underbrace{\bigg(\sum_{Q\subset Q_0} |(a, h_Q)|^2\bigg)^{1/2}}_{\leq C(n)\sqrt{|Q_0|}\|a\|_{BMO^\cD}}
		\underbrace{\bigg(\sum_{Q\subset Q_0} |(\widetilde{b}, h_Q)|^2\bigg)^{1/2}}_{\substack{\leq C(n)\sqrt{|Q_0|}\|\widetilde{b}\|_{BMO^\cD} \\ 
		\leq C(n)\frac{6}{\epsilon} \La w\Ra_{Q_0} \sqrt{|Q_0|} \|b\|_{BMO^\cD(w)}}},
	\end{eqnarray*}
where $C(n)$ is the dimensional constant arising from using the John-Nirenberg Theorem.
Finally, we have
	$$
	\sum_{\substack{Q\subset Q_0\\ Q\not\subset E}}|(a, h_Q)|\:|(b, h_Q)|\:\La|f|\Ra_Q\La|g|\Ra_Q  \leq 
	\frac{C(n)}{\epsilon^3}\|a\|_{BMO^\cD}\|b\|_{BMO^\cD(w)}\La|f|\Ra_{Q_0}\La|g|\Ra_{Q_0}\La w\Ra_{Q_0}|Q_0|.
	$$
	
Now we recurse on the $\sum_{R\in\cE}$ terms in \eqref{E:App2} and form $\cS(Q_0)$ by adding $Q_0$ first, $\cE$ are the $\cS$-children of $Q_0$, and so on. The collection $\cS(Q_0)$ satisfies the $\cS$-children definition of sparseness, with $\sum_{R\in\text{ch}_\cS(Q)}|R|\leq\epsilon|Q|$ for all $Q\in\cS(Q_0)$, so it is $\frac{1}{1-\epsilon}$-Carleson. So, if we choose $\epsilon=\frac{\Lambda}{\Lambda-1}$, we have
	$$
	\bigg|\sum_{Q\subset Q_0} (a,h_Q)(b,h_Q)\La f\Ra_Q\La g\Ra_Q\bigg| \leq C(n) \left(\frac{\Lambda}{\Lambda-1}\right)^3 \|a\|_{BMO^\cD}\|b\|_{BMO^\cD(w)} 
	\underbrace{\sum_{Q\in\cS(Q_0)} \La w\Ra_Q \La|f|\Ra_Q\La|g|\Ra_Q|Q|}_{=(\cA_{\cS(Q_0)}^w|f|, |g|)}
	$$	
	
We summarize this below:

\begin{prop}
\label{P:App2}
There is a dimensional constant $C(n)$ such that for all $a\in BMO^\cD$, $b\in BMO^\cD(w)$, where $w$ is a weight on $\bR^n$, fixed $Q_0\in\cD$ and $\Lambda>1$, there is a $\Lambda$-Carleson sparse collection $\cS(Q_0)\subset\cD(Q_0)$ such that
	$$
	\bigg|\sum_{Q\subset Q_0} (a,h_Q)(b,h_Q)\La f\Ra_Q\La g\Ra_Q\bigg| \leq C(n) \left(\frac{\Lambda}{\Lambda-1}\right)^3 \|a\|_{BMO^\cD}\|b\|_{BMO^\cD(w)}
	(\cA_{\cS(Q_0)}^w|f|, |g|).
	$$
\end{prop}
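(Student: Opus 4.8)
The plan is to run the same kind of recursive stopping-time argument used in the proof of Theorem~\ref{T:ParaDom}, now carrying along \emph{three} stopping conditions simultaneously --- Calder\'on--Zygmund stopping for $f$, for $g$, and the weighted-to-unweighted BMO reduction of $b$ --- and bounding the ``diagonal'' part of the bilinear form at each stage by the corresponding term of $(\cA^w_{\cS(Q_0)}|f|,|g|)$.

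Fix $\epsilon\in(0,1)$. Inside $Q_0$ I would first form three stopping collections of maximal subcubes: $\cE_1$ where $\La|f|\Ra_R>\tfrac{3}{\epsilon}\La|f|\Ra_{Q_0}$, $\cE_2$ where $\La|g|\Ra_R>\tfrac{3}{\epsilon}\La|g|\Ra_{Q_0}$, and $\cE_3$ where $\La w\Ra_R>\tfrac{3}{\epsilon}\La w\Ra_{Q_0}$, with unions $E_1,E_2,E_3$; by the weak $(1,1)$ bound for the dyadic maximal function each has measure $\leq\tfrac{\epsilon}{3}|Q_0|$. Out of $\cE_3$ I build $\widetilde b:=\unit_{Q_0}b-\sum_{R\in\cE_3}(b-\La b\Ra_R)\unit_R$, which by the modified Calder\'on--Zygmund decomposition of \cite{BMOdecomp} --- exactly as in Part~I of the proof of Theorem~\ref{T:ParaDom} --- lies in $BMO^\cD(\bR^n)$ with $\|\widetilde b\|_{BMO^\cD}\lesssim\tfrac{1}{\epsilon}\La w\Ra_{Q_0}\|b\|_{BMO^\cD(w)}$, and satisfies $(\widetilde b,h_Q)=(b,h_Q)$ for every $Q\subset Q_0$ with $Q\not\subset E_3$. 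Put $E:=E_1\cup E_2\cup E_3$ and let $\cE$ be the maximal subcubes of $Q_0$ contained in $E$, so $\sum_{R\in\cE}|R|\leq\epsilon|Q_0|$.

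Next I would split the modulus of $\sum_{Q\subset Q_0}(a,h_Q)(b,h_Q)\La f\Ra_Q\La g\Ra_Q$ into the sum over $Q\subset Q_0$ with $Q\not\subset E$ plus $\sum_{R\in\cE}|(\Pi_a^*\Pi_{b,R}f,g)|$. On the first (``good'') piece every such $Q$ satisfies $\La|f|\Ra_Q\leq\tfrac{3}{\epsilon}\La|f|\Ra_{Q_0}$, $\La|g|\Ra_Q\leq\tfrac{3}{\epsilon}\La|g|\Ra_{Q_0}$ and $(b,h_Q)=(\widetilde b,h_Q)$; pulling the $Q_0$-averages out, applying Cauchy--Schwarz in $Q$ to $|(a,h_Q)|\,|(\widetilde b,h_Q)|$, and then the John--Nirenberg estimate $\sum_{Q\subset Q_0}|(a,h_Q)|^2\leq C(n)|Q_0|\,\|a\|_{BMO^\cD}^2$ (together with its analog for $\widetilde b$, which is where the $\tfrac{1}{\epsilon}\La w\Ra_{Q_0}$ factor enters) bounds this piece by $\tfrac{C(n)}{\epsilon^3}\|a\|_{BMO^\cD}\|b\|_{BMO^\cD(w)}\La w\Ra_{Q_0}\La|f|\Ra_{Q_0}\La|g|\Ra_{Q_0}|Q_0|$ --- precisely $\tfrac{C(n)}{\epsilon^3}\|a\|\,\|b\|$ times the $Q_0$-term of $(\cA^w_{\cS(Q_0)}|f|,|g|)$.

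Finally I would iterate: take $Q_0$ as the root of $\cS(Q_0)$, declare $\cE$ its $\cS$-children, and repeat the construction verbatim on each $R\in\cE$. Since at every generation $\sum_{R\in\text{ch}_\cS(Q)}|R|\leq\epsilon|Q|$, the collection $\cS(Q_0)$ is $\tfrac{1}{1-\epsilon}$-Carleson; choosing $\epsilon=\tfrac{\Lambda-1}{\Lambda}$ makes it $\Lambda$-Carleson and turns the $\epsilon^{-3}$ into $(\tfrac{\Lambda}{\Lambda-1})^3$, while summing the good-piece bounds over all $Q\in\cS(Q_0)$ reassembles $(\cA^w_{\cS(Q_0)}|f|,|g|)$, giving the stated inequality. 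The only point that needs genuine care is the interleaving of the three stopping conditions with the BMO replacement: one must check that on cubes escaping $E$ one may freely swap $b$ for $\widetilde b$, and that the good-piece estimate produces exactly the single factor $\La w\Ra_{Q_0}$ --- with no spurious $A_p$ constant --- so that the envelope generated by the recursion is $\cA^w_{\cS(Q_0)}$ and not some other operator. Beyond that bookkeeping this is a direct transcription of the sparse domination already established for $\Pi_b$ in Section~\ref{S:ParaDom}.
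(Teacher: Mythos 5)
Your proposal is correct and follows essentially the same route as the paper's proof in Appendix~\ref{A:ParaComp}: the same triple stopping-time construction with thresholds $\tfrac{3}{\epsilon}$ for $f$, $g$, and $w$, the same replacement of $b$ by the unweighted-BMO function $\widetilde b$ built from the $w$-stopping cubes, the same Cauchy--Schwarz plus John--Nirenberg estimate on the good part, and the same recursion producing a $\tfrac{1}{1-\epsilon}$-Carleson family with $\epsilon=\tfrac{\Lambda-1}{\Lambda}$. (You even implicitly correct a typo in the paper, which writes $\epsilon=\tfrac{\Lambda}{\Lambda-1}$ at the final step.)
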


\vspace{0.1in}
\begin{center}
$\ast$
\end{center}
\vspace{0.1in}

Say now we have Bloom weights $\mu,\lambda \in A_p$ ($1<p<\infty$), $\nu:=\mu^{1/p}\lambda^{-1/p}$ on $\bR^n$ and $a\in BMO^\cD$, $b\in BMO^\cD(\nu)$. Suppose further that $a$ has finite Haar expansion. Then there are at most $2^n$ disjoint dyadic cubes $Q_k\in\cD$, $1\leq k \leq 2^n$, such that
$a=\sum_k\sum_{Q\subset Q_k}(a,h_Q)h_Q$, and then
	$$
	(\Pi^*_a\Pi_b f, g) = \sum_k (\Pi_a^*\Pi_{b, Q_k}f, g)
	$$
Given $\Lambda>1$, by Proposition \ref{P:App2}, there is for each $k$ a $\Lambda$-Carleson sparse collection $\cS(Q_k)\subset\cD(Q_k)$ such that
	$$
	\bigg| (\Pi_a^*\Pi_{b, Q_k}f, g) \bigg| \leq C(n) \left(\frac{\Lambda}{\Lambda-1}\right)^3 \|a\|_{BMO^\cD}\|b\|_{BMO^\cD(\nu)}
	(\cA_{\cS(Q_k)}^\nu|f|, |g|).
	$$
Then
	$$
	\bigg| (\Pi_a^*\Pi_{b}f, g) \bigg| \leq C(n) \left(\frac{\Lambda}{\Lambda-1}\right)^3 \|a\|_{BMO^\cD}\|b\|_{BMO^\cD(\nu)}
	(\cA_\cS^\nu|f|, |g|),	
	$$
where $\cS:=\cup_k\cS(Q_k)$ is a $\Lambda$-Carleson sparse collection in $\Upsilon^\cD(\bR^n)$.

Take now $f\in L^p(\mu)$ and $g\in L^{p'}(\lambda')$. By a simple application of H\"{o}lder's inequality:
	$$
	|(\cA_\cS^\nu|f|,|g|)| \leq \|\cA_\cS^\nu:L^p(\mu)\rightarrow L^p(\lambda)\|\: \|f\|_{L^p(\mu)}\|g\|_{L^{p'}(\lambda')}.
	$$
Then
	$$
	\|\Pi_a^*\Pi_b : L^p(\mu)\rightarrow L^p(\lambda)\| \leq C(n)\|a\|_{BMO^\cD}\|b\|_{BMO^\cD(\nu)} 
	\sup_{\substack{\cS\in\Upsilon^\cD(\bR^n)\\ \Lambda_{(\cS)}=\Lambda}} \left(\frac{\Lambda}{\Lambda-1}\right)^3
	\|\cA_\cS^\nu:L^p(\mu)\rightarrow L^p(\lambda)\|
	$$
holds for all $a$ with finite Haar expansion, and therefore for all $a$. This proves Theorem \ref{T:ParaComp}.

\newpage

\begin{bibdiv}
\begin{biblist}

\bib{Blasco}{article}{
   author={Blasco, Oscar},
   title={Dyadic BMO, paraproducts and Haar multipliers},
   conference={
      title={Interpolation theory and applications},
   },
   book={
      series={Contemp. Math.},
      volume={445},
      publisher={Amer. Math. Soc., Providence, RI},
   },
   date={2007},
   pages={11--18},
   review={\MR{2381883}},
   doi={10.1090/conm/445/08590},
}

\bib{Bloom}{article}{
   author={Bloom, Steven},
   title={A commutator theorem and weighted BMO},
   journal={Trans. Amer. Math. Soc.},
   volume={292},
   date={1985},
   number={1},
   pages={103--122},
   issn={0002-9947},
   review={\MR{805955}},
   doi={10.2307/2000172},
}
\bib{Chung}{article}{
   author={Chung, Daewon},
   title={Sharp estimates for the commutators of the Hilbert, Riesz
   transforms and the Beurling-Ahlfors operator on weighted Lebesgue spaces},
   journal={Indiana Univ. Math. J.},
   volume={60},
   date={2011},
   number={5},
   pages={1543--1588},
   issn={0022-2518},
   review={\MR{2997000}},
   doi={10.1512/iumj.2011.60.4453},
}

\bib{DCU}{article}{
   author={Cruz-Uribe, David},
   author={Martell, Jos\'{e} Mar\'{\i}a},
   author={P\'{e}rez, Carlos},
   title={Sharp weighted estimates for classical operators},
   journal={Adv. Math.},
   volume={229},
   date={2012},
   number={1},
   pages={408--441},
   issn={0001-8708},
   review={\MR{2854179}},
   doi={10.1016/j.aim.2011.08.013},
}

\bib{BMOdecomp}{article}{
   author={Duong, Xuan Thinh},
   author={Holmes, Irina},
   author={Li, Ji},
   author={Wick, Brett D.},
   author={Yang, Dongyong},
   title={Two weight commutators in the Dirichlet and Neumann Laplacian
   settings},
   journal={J. Funct. Anal.},
   volume={276},
   date={2019},
   number={4},
   pages={1007--1060},
   issn={0022-1236},
   review={\MR{3906299}},
   doi={10.1016/j.jfa.2018.12.003},
}

\bib{Grafakos}{book}{
   author={Grafakos, Loukas},
   title={Classical and modern Fourier analysis},
   publisher={Pearson Education, Inc., Upper Saddle River, NJ},
   date={2004},
   pages={xii+931},
   isbn={0-13-035399-X},
   review={\MR{2449250}},
}

\bib{HLW}{article}{
   author={Holmes, Irina},
   author={Lacey, Michael T.},
   author={Wick, Brett D.},
   title={Commutators in the two-weight setting},
   journal={Math. Ann.},
   volume={367},
   date={2017},
   number={1-2},
   pages={51--80},
   issn={0025-5831},
   review={\MR{3606434}},
   doi={10.1007/s00208-016-1378-1},
}
\bib{HytonenLectures}{article}{
   author={Hyt\"{o}nen, Tuomas},
   title={Dyadic Analysis and Weights; Lecture Notes from a course at University of Helsinki},
   date={2014},
   eprint={http://wiki.helsinki.fi/download/attachments/130069291/dyadic.pdf},
}

\bib{Lacey}{article}{
   author={Lacey, Michael T.},
   title={An elementary proof of the $A_2$ bound},
   journal={Israel J. Math.},
   volume={217},
   date={2017},
   number={1},
   pages={181--195},
   issn={0021-2172},
   review={\MR{3625108}},
   doi={10.1007/s11856-017-1442-x},
}

\bib{LernerNazarov}{article}{
   author={Lerner, Andrei K.},
   author={Nazarov, Fedor},
   title={Intuitive dyadic calculus: the basics},
   journal={Expo. Math.},
   volume={37},
   date={2019},
   number={3},
   pages={225--265},
   issn={0723-0869},
   review={\MR{4007575}},
   doi={10.1016/j.exmath.2018.01.001},
}

\bib{Lerner}{article}{
   author={Lerner, Andrei K.},
   author={Ombrosi, Sheldy},
   author={Rivera-R\'{\i}os, Israel P.},
   title={On pointwise and weighted estimates for commutators of
   Calder\'{o}n-Zygmund operators},
   journal={Adv. Math.},
   volume={319},
   date={2017},
   pages={153--181},
   issn={0001-8708},
   review={\MR{3695871}},
   doi={10.1016/j.aim.2017.08.022},
}

\bib{PereyraSparse}{article}{
   author={Pereyra, Mar\'{\i}a Cristina},
   title={Dyadic harmonic analysis and weighted inequalities: the sparse
   revolution},
   conference={
      title={New trends in applied harmonic analysis. Vol. 2},
   },
   book={
      series={Appl. Numer. Harmon. Anal.},
      publisher={Birkh\"{a}user/Springer, Cham},
   },
   date={[2019] \copyright 2019},
   pages={159--239},
   review={\MR{4311199}},
   doi={10.1007/978-3-030-32353-07},
}

\bib{PereyraLectures}{article}{
   author={Pereyra, Mar\'{\i}a Cristina},
   title={Lecture notes on dyadic harmonic analysis},
   conference={
      title={Second Summer School in Analysis and Mathematical Physics},
      address={Cuernavaca},
      date={2000},
   },
   book={
      series={Contemp. Math.},
      volume={289},
      publisher={Amer. Math. Soc., Providence, RI},
   },
   date={2001},
   pages={1--60},
   review={\MR{1864538}},
   doi={10.1090/conm/289/04874},
}

\bib{Stef}{article}{
   author={Petermichl, Stefanie},
   title={Dyadic shifts and a logarithmic estimate for Hankel operators with
   matrix symbol},
   language={English, with English and French summaries},
   journal={C. R. Acad. Sci. Paris S\'{e}r. I Math.},
   volume={330},
   date={2000},
   number={6},
   pages={455--460},
   issn={0764-4442},
   review={\MR{1756958}},
   doi={10.1016/S0764-4442(00)00162-2},
}

\end{biblist}
\end{bibdiv}

\end{document}